
\documentclass[11pt]{amsart}





\allowdisplaybreaks        







\usepackage{geometry}  
\newgeometry{left=1.4in, right=1.4in, bottom=1in, top=1in}

\usepackage{amsmath}
\usepackage{amsfonts}
\usepackage{amssymb}
\usepackage{amsthm}
\usepackage{mathtools} 
\usepackage{latexsym}


\usepackage{enumerate}
\usepackage{cancel}
\usepackage{cases}
\usepackage{empheq}
\usepackage{multicol}


\usepackage{wrapfig}

\usepackage{caption}
\usepackage{subcaption}




 
\usepackage{tikz}
\usepackage{pgf}
\usepackage{pgfplots} 
\pgfplotsset{compat=1.16}

\usepackage{color}

\usepackage[backgroundcolor=gray!30,linecolor=black]{todonotes}
   
\usepackage{mathrsfs}
\usepackage{fontenc} 
\usepackage{inputenc} 

\usepackage{verbatim}



\theoremstyle{plain}
\newtheorem{theorem}{Theorem}[section]

\newtheorem{lemma}[theorem]{Lemma}
\newtheorem{corollary}[theorem]{Corollary}

\theoremstyle{definition}
\newtheorem{definition}[theorem]{Definition}

\theoremstyle{remark}
\newtheorem{remark}[theorem]{Remark}

\setcounter{secnumdepth}{3} 
\numberwithin{equation}{section} 
\numberwithin{figure}{section}   

\usepackage[square,comma,numbers,sort&compress]{natbib}
\usepackage{url}





\newcommand{\vect}[1]{\mathbf{#1}}

\newcommand{\bk}{\vect{k}}
\newcommand{\bphi}{\boldsymbol{\phi}}
\newcommand{\bpsi}{\boldsymbol{\psi}}

\newcommand{\bu}{\vect{u}}

\newcommand{\bv}{\vect{v}}
\newcommand{\bV}{\vect{V}}
\newcommand{\bw}{\vect{w}}

\newcommand{\bx}{\vect{x}}

\newcommand{\bg}{\vect{g}}
\newcommand{\bh}{\vect{h}}

\newcommand{\bbf}{\vect{f}}

\newcommand{\field}[1]{\mathbb{#1}}
\newcommand{\nN}{\field{N}}

\newcommand{\nR}{\field{R}}






\newcommand{\lp}{\left(}
\newcommand{\rp}{\right)}

\newcommand{\tbu}{\widetilde{\bu}}

\newcommand{\oldnu}{\nu}


\newcommand{\pd}[2]{\frac{\partial #1}{\partial #2}}

\newcommand{\set}[1]{\left\{#1\right\}}
\newcommand{\ip}[2]{\left<#1,#2\right>}

\newcommand{\pnt}[1]{\left(#1\right)}





\usepackage{placeins} 

\newcounter{my_counter}
\setcounter{my_counter}{1} 

\title[Data Assimilation: Nonlinear algorithm]{Super-exponential convergence rate of a nonlinear continuous data assimilation algorithm: The 2D Navier-Stokes equations paradigm}


\date{\today}

\author{Elizabeth Carlson}
\address[Elizabeth Carlson]{Department of Mathematics, 
                University of Victoria,
                Victoria, BC V8P 5C2, Canada
        }
\email[Elizabeth Carlson]{ecarlson10@uvic.ca}
\author{Adam Larios}
\address[Adam Larios]{Department of Mathematics, 
                University of Nebraska--Lincoln,
        Lincoln, NE 68588-0130, USA}
\email[Adam Larios]{alarios@unl.edu}
\author{Edriss~S.~Titi}
\address[Edriss~S.~Titi]{Department of Mathematics, Texas A\&M University, College Station, TX 77843, USA and Department of Applied Mathematics and Theoretical Physics, University of Cambridge, Cambridge CB3 0WA, UK}
\email[Edriss~S.~Titi]{$\quad$titi@math.tamu.edu \\ and edriss.titi@damtp.cam.ac.uk}

\keywords{Data assimilation, feedback control, Navier-Stokes equations, nudging.}
\thanks{MSC  Classification: 
34D06, 
35K61, 
35Q93, 
93C20. 
}

\begin{document}

\begin{abstract}
We study a nonlinear-nudging modification of the Azouani-Olson-Titi continuous data assimilation (downscaling) algorithm for the 2D incompressible Navier-Stokes equations.  We give a rigorous proof that the nonlinear-nudging system is globally well-posed, and moreover that its solutions converge to the true solution exponentially fast in time.  Furthermore, we also prove that, once the error has decreased below a certain order one threshold, the convergence becomes double-exponentially fast in time, up until a precision determined by the sparsity of the observed data.  In addition, we demonstrate the applicability of the analytical and sharpness of the results computationally.
\end{abstract}

\maketitle
\thispagestyle{empty}

\noindent

\section{Introduction}
\noindent
Many dissipative dynamical systems that model physical processes are chaotic and highly sensitive to initial conditions.  Hence, having incomplete information about initial conditions makes simulating these systems accurately a difficult task.
To overcome this issue in practice, the available spatially discrete observed data can be used to inform the model via a wide variety of techniques, collectively known as \emph{data assimilation}.  Data assimilation can be done using a variety of different methods incorporate observations into the mathematical model generally using either statistical or continuous techniques.  In this paper, we focus on a continuous data assimilation (CDA) algorithm, also known as the Azouani-Olson-Titi (AOT) algorithm.  The CDA algorithm is based on the mathematical theory that many dissipative evolution equations describing fluid flow have solutions that are, in large-time, determined uniquely by the values of their solutions at a finite number of adequately distributed nodes or modes (see, e.g. \cite{Foias_Manley_Rosa_Temam_2001} and references therein).  It incorporates observational data into the model at the partial differential equation (PDE) level using a feedback control (nudging) term.  This paper investigates the convergence of a nonlinear-nudging version of the CDA algorithm.

The CDA algorithm was first introduced in \cite{Azouani_Olson_Titi_2014,Azouani_Titi_2014} (see also \cite{Cao_Kevrekidis_Titi_2001,Hayden_Olson_Titi_2011,Olson_Titi_2003,Olson_Titi_2008_TCFD} for early ideas in this direction).  The algorithm considers a dissipative dynamical system
\begin{align}\label{generic_DE}
 \frac{d\bu}{dt} = \mathcal{F}(\bu),
\end{align}
with an unknown initial condition.  We denote a given interpolation of the observations of the unknown reference solution $\bu$ at course spatial scales by $I_h(\bu)$, where $h$ is some characteristic length scale of the the observational data (e.g., the average spatial distance between observations).
These observations are incorporated via a feedback control term in the following modified system
\begin{subequations}\label{generic_DE_CDA}
\begin{alignat}{2}
 \frac{d\bv}{dt} &= \mathcal{F}(\bv) + \mu (I_h(\bu)-I_h(\bv)),\\
   &\hspace{0.4in} \bv(t=0) = \bv_0,
 \end{alignat}
\end{subequations}
where $\mu > 0$ is an adequately chosen positive relaxation (nudging) parameter and $\bv_0$ is any sufficiently smooth initial condition.  A wide class of standard interpolants $I_h$ are admissible by the analysis of \cite{Azouani_Olson_Titi_2014} including, e.g., piecewise constant interpolation, linear interpolation, and Fourier truncation, among others, making this algorithm very adaptable for physical models and computationally inexpensive to implement.  In the context of the 2D incompressible Navier-Stokes equations with both no-slip and periodic boundary conditions,  the global well-posedness of \eqref{generic_DE_CDA} and exponential convergence in time to the reference solution $\bu$ of \eqref{generic_DE} were proven in \cite{Azouani_Olson_Titi_2014}.  This algorithm was then investigated in the context of numerous dissipative dynamical systems under a variety of assumptions including noisy data, incorrect parameters, incorrect models, data provided discretely in time, and assimilation of only some instead of all state variables, in, e.g., 
\cite{Akbas_Cibik_2020,
Albanez_Nussenzveig_Lopes_Titi_2016,
Altaf_Titi_Knio_Zhao_Mc_Cabe_Hoteit_2015,
Balakrishna_Biswas_2022,
Bessaih_Ginting_McCaskill_2022,
Bessaih_Olson_Titi_2015,
Biswas_Bradshaw_Jolly_2020,
Biswas_Foias_Mondaini_Titi_2018downscaling,
Biswas_Hudson_Larios_Pei_2017,
Biswas_Martinez_2017,
Biswas_Price_2020_AOT3D,
Carlson_Hudson_Larios_2020,
Carlson_Hudson_Larios_Martinez_Ng_Whitehead_2021,
Carlson_Larios_2021_sens,
Carlson_VanRoekel_Petersen_Godinez_Larios_2021,
Celik_Olson_Titi_2019,
Chen_Li_Lunasin_2021,
Cao_et_al_2022,
Chow_Leung_Pakzad_2022,
Diegel_Rebholz_2021,
DiLeoni_Clark_Mazzino_Biferale_2018_inferring,
Desamsetti_Dasari_Langodan_Knio_Hoteit_Titi_2019_WRF,
Desamsetti_et_al_2022,
Du_Shiue2021,
Farhat_GlattHoltz_Martinez_McQuarrie_Whitehead_2019,
Farhat_Johnston_Jolly_Titi_2018,
Farhat_Jolly_Titi_2015,
Farhat_Lunasin_Titi_2016abridged,
Farhat_Lunasin_Titi_2016benard,
Farhat_Lunasin_Titi_2016_Charney,
Farhat_Lunasin_Titi_2017_Horizontal,
Farhat_Lunasin_Titi_2018_Leray_AOT,
Franz_Larios_Victor_2022,
Foias_Mondaini_Titi_2016,
Foyash_Dzholli_Kravchenko_Titi_2014,
GarciaArchilla_Novo_2020,
GarciaArchilla_Novo_Titi_2018,
Gardner_Larios_Rebholz_Vargun_Zerfas_2020_VVDA,
Gesho_Olson_Titi_2015,
GlattHoltz_Kukavica_Vicol_2014,
Hudson_Jolly_2019,
Ibdah_Mondaini_Titi_2018uniform,
Jolly_Martinez_Olson_Titi_2018_blurred_SQG,
Jolly_Martinez_Titi_2017,
Larios_Pei_2017_KSE_DA_NL,
Larios_Pei_2018_NSV_DA,
Larios_Rebholz_Zerfas_2018,
Larios_Victor_2019,
Lunasin_Titi_2015,
Markowich_Titi_Trabelsi_2016_Darcy,
Martinez_2022_pr,
Martinez_2022_forcing,
Mondaini_Titi_2018_SIAM_NA,
Pachev_Whitehead_McQuarrie_2021concurrent,
Pei_2019,
Rebholz_Zerfas_2018_alg_nudge,
Rodrigues_2022,
Titi_Trabelsi_2022,
You_2022,
You_Xia_2022,
Zauner_Mons_Marquet_Leclaire_2022,
Zerfas_Rebholz_Schneier_Iliescu_2019}
and the references therein.).  In each of these papers exponential convergence either to $0$ or up to a certain measurable error regardless of the choice of initial conditions, and the slight modification required to existing models makes the CDA algorithm an efficient and effective data assimilation algorithm.  Classical data assimilation methods are generally statistical in nature, including the Kalman filter and its variants as well as 4DVAR, but these methods are non-trivial to implement and computationally much more expensive than simply running a simulation of the dynamical system alone, making the CDA algorithm a more efficient and potentially viable alternative for use in certain real world models (see, e.g., \cite{Desamsetti_Dasari_Langodan_Knio_Hoteit_Titi_2019_WRF, Desamsetti_et_al_2022, Carlson_VanRoekel_Petersen_Godinez_Larios_2021}).

The motivation for this work comes from  the computational study \cite{Larios_Pei_2017_KSE_DA_NL}, which introduced and investigated a nonlinear version of the CDA algorithm in the context of the Kuramoto-Sivashinsky equations.
This nonlinear-nudging algorithm computationally demonstrated \textit{super-exponential} convergence in time to the reference solution for the 1D Kuramoto-Sivashinsky equations.  This was later demonstrated with a similar modification in a computational study on the 2D magnetohydrodynamic equations in \cite{Hudson_Jolly_2019}.  In \cite{Du_Shiue2021}, the authors adapted the nonlinear-nudging data assimilation schemes of \cite{Larios_Pei_2017_KSE_DA_NL} to the context of the Lorenz equation, and proved exponential (but not super-exponential) convergence. Note that another nonlinear approach to nudging was proposed and studied in \cite{Germano_2017}, but using a very different method from that in the present work.
In our case, in order to simplify the practical implementation, we consider the following nonlinear-nudging system of equations:
\begin{subequations}
\begin{align}\label{generic_DE_nonlin_CDA_sum_CDA}
 \bv_t &= \mathcal{F}(\bv) + \mu\mathcal{N}(I_h(\bu)-I_h(\bv)) + \beta(I_h(\bu)-I_h(\bv)), \\
 \bv(\bx,0) &= \bv_0(\bx),
 \end{align}
\end{subequations}
where we denote, with $\gamma \in [0,1)$,
\begin{align*}
\mathcal{N}(\bphi) := 
\begin{cases}
0, &  \text{ if }\quad\|\bphi\|_{L^2(\Omega)} = 0,
\\ 
\bphi\|\bphi\|_{L^2(\Omega)}^{-\gamma}, & \text{ if }\quad \|\bphi\|_{L^2(\Omega)}>0.
\end{cases} 
\end{align*}
%

\noindent Note that we formally recover the linear-nudging CDA algorithm when $\gamma=0$.  As demonstrated computationally in \cite{Hudson_Jolly_2019,Larios_Pei_2017_KSE_DA_NL}, we expect that once $\|I_h(\bu)-I_h(\bv)\|_{L^2(\Omega)}<1$, the error of the nonlinear-nudging algorithm should enjoy a super-exponential decay rate and reach machine precision at an earlier time than the linear-nudging algorithm.

We anticipate that this formulation should yield double exponential convergence of the algorithm if $\gamma \in (0,1)$.
The paradigm equations used to demonstrate the convergence of this combined linear/nonlinear-nudging algorithm are the 2D incompressible Navier-Stokes equations.  Indeed we prove here, with certain reasonable assumptions on $I_h$ and given a sufficiently developed reference flow, a double-exponential decay rate of the error, at least down to a level $\epsilon>0$, determined by $h$ and the $L^2$ norm (the size) of the initial data of the reference solution $\bu$ (but otherwise independent of the initial data), and other physical parameters in the system (see Theorem \ref{first_conv_theorem} below for details).
We also prove that after the precision $\epsilon$ is reached, which happens in finite time, the error continues to decay to zero at least at an exponential rate.  

We believe the $\epsilon$ barrier for super-exponential convergence discussed above is likely insurmountable due to 1) a direct observation from the method of proof (see Remark \ref{remark_eps_barrier}), and 2) a heuristic argument of the same observation for a more general dissipative system in Appendix \ref{sec_Heuristic_eps_argument}.  Namely, as can be seen by the arguments Appendix \ref{sec_Heuristic_eps_argument},  the nonlinear term $\mathcal{N}(I_h(\bu)-I_h(\bv))$ in \eqref{generic_DE_nonlin_CDA_sum_CDA} forces the large spatial scales (e.g., low modes) to converge at a super-exponential rate; however, this process seems to eventually destabilize the smaller spatial scales so much that they cannot be suppressed and stabilized by the linear viscous effect, obstructing super-exponential convergence after error becomes sufficiently small.  Therefore, we expect to see super-exponential convergence rates for early times, which then become merely exponential for later times once the error becomes very small.  Indeed, this is what we observe in the simulations in Section \ref{sec_simulations}.

The paper is organized as follows: in Section \ref{secPre} we lay out notation and state definitions and preliminary theorems and results for reference; in Section \ref{secExis} we prove a global well-posedness result for the nonlinear-nudging system \eqref{nonlinDA};  in Section \ref{secConv} we prove the convergence results discussed above; in Section \ref{sec_simulations} we investigate our results in simulations.  Concluding remarks are in Section \ref{sec_conclusion}.


\section{Preliminaries}\label{secPre}

\noindent
The convergence of the nonlinear-nudging CDA algorithm will be proved in this paper in the context of the 2D incompressible Navier-Stokes equations, as a paradigm, with periodic boundary conditions.  However, the result is equally valid for general dissipative systems of equations with physical boundary conditions.  We begin by stating some preliminary theoretical foundations.  First, the initial-boundary value problem
\begin{subequations}\label{NSEtrue}
\begin{align}
 \bu_t + \bu\cdot\nabla\bu - \nu\triangle\bu + \nabla p &= \bbf, &&\text{in }  \Omega\times (0,T), \\
 \nabla\cdot\bu &= 0, &&\text{in } \Omega\times (0,T), \\
 \bu(\cdot,0) &= \bu_0(\cdot), &&\text{in }  \Omega,
 \end{align}
\end{subequations}
determines the reference solution for the data assimilation system, where $\bu$ is the velocity, $p$ is the pressure, $\nu > 0$ is viscosity,  $\Omega= \mathbb{T}^2 = \mathbb{R}^2 / \mathbb{Z}^2$ is the domain (and hence the domain has unit length $L=1$), $T>0$, $\bbf$ is some forcing, $\bu_0$ is the initial condition, and the system is equipped with periodic boundary conditions.  

The linear-nudging CDA algorithm applied to the 2D incompressible Navier-Stokes equations \eqref{NSEtrue} yields the system, with $\mu > 0$ a constant,
 \begin{subequations}\label{linDA}
 \begin{align}
  \bv_t + \bv\cdot \nabla \bv - \nu \triangle \bv + \nabla \widetilde{q} &= \bbf + \mu (I_h(\bu)-I_h(\bv)), &&\text{in }  \Omega\times (0,T), \\
  \nabla \cdot \bv &= 0, &&\text{in }  \Omega\times (0,T), \\
  \bv(\cdot,0) &= \bv_0(\cdot),&&\text{in }  \Omega.
  \end{align}
 \end{subequations}
 
 The nonlinear-nudging CDA algorithm applied to the 2D incompressible Navier-Stokes equations \eqref{NSEtrue} yields the system, with constants $\mu, \beta> 0$,
 \begin{subequations}\label{nonlinDA}
  \begin{align}
  \bv_t + \bv\cdot \nabla \bv - \nu \triangle \bv + \nabla q  &= \bbf  + \mu \mathcal{N}(I_h(\bu)-I_h(\bv)) 
  \\ \notag
  &\qquad + \beta (I_h(\bu)-I_h(\bv)),&&\text{in } \Omega\times (0,T), \\
   \nabla \cdot \bv &= 0, &&\text{in } \Omega\times (0,T),\\
  \bv(\cdot,0) &= \bv_0(\cdot), &&\text{in }  \Omega.
  \end{align}
 \end{subequations}


We recall the following well-known spaces. Let \[\mathcal{V} = \{\bu \in \dot{C}_{\text{p}}^\infty(\Omega) : \nabla\cdot\bu=0\},\] where $\dot{C}_{\text{p}}^\infty(\Omega)$ is the space of infinitely differentiable, mean-free, periodic functions on the torus.  We denote $H$ to be the closure of $\mathcal{V}$ in $L^2$ and $V$ to be the closure of $\mathcal{V}$ in $H^1$.  The inner-product on $H$ is the usual $L^2$ inner-product,
\begin{align*}
 (\bu,\bv):=\int_{\Omega}\sum\limits_{i,j=1}^2 u_i(\bx)v_j(\bx)d\bx, \quad \bu,\bv \in H,
\end{align*}
and we denote the inner-product on $V$ by
\begin{align*}
 ((\bu,\bv)):=\int_{\Omega}\sum\limits_{i,j=1}^2 \pd{u_j}{x_i}(\bx)\pd{v_j}{x_i}(\bx)d\bx, \quad \bu,\bv \in V.
\end{align*}
These yield the following norms.
\begin{align*}
 \|\bu\|_{H}:=\sqrt{(\bu,\bu)}, \qquad \|\bu\|_{V} := \sqrt{((\bu,\bu))}.
\end{align*}
Note that the definiteness of the V-norm follows from the Poincar\'e inequality (see, e.g., \cite{Constantin_Foias_1988}, \cite{Evans_2010}, \cite{Temam_2001_Th_Num}).

We denote the Leray projector as $P_\sigma : \dot{L}^2(\Omega) \to H$, where for smooth functions $\bw$,  $P_\sigma \bw = \bw - \nabla\triangle^{-1}\nabla\cdot\bw$, (the inverse Laplacian being computed with respect to periodic boundary conditions and the mean-free condition), and $P_\sigma$ is extended to $\dot{L}^2(\Omega)$ by continuity (see, e.g.,  \cite{Constantin_Foias_1988, Temam_2001_Th_Num}).  We denote $A:\mathcal{D}(A) \to H$ and $B: V\times V \to V^*$, where $A$ and $B$ are continuous extensions of the operators
\begin{align*}
 A\bu &= -P_\sigma\triangle\bu, \qquad \qquad \bu \in \mathcal{V}, \\
B(\bu,\bv) &= P_\sigma((\bu\cdot\nabla)\bv), \qquad \bu,\bv \in \mathcal{V},
\end{align*}
where $\mathcal{D}(A) := V \cap H^2(\Omega)$.  Note that $A$ is a linear, self-adjoint, and positive definite operator with compact inverse, so there exists an orthonormal basis of eigenfunctions $\set{\bw_i}_{i\in\nN}$ in $H$ such that $A\bw_i = \lambda_i\bw_i$, with eigenvalues $\lambda_i>0$ that are monotonically nondecreasing in $i$ (see, e.g., \cite{Constantin_Foias_1988, Temam_2001_Th_Num, Robinson_2001}).  Moreover, observe in this case (the case of periodic boundary conditions) that $A = -\triangle$. Furthermore, the following versions of Poincar{\'e} inequality hold,
\begin{align*}
\lambda_1 \|\bu\|_H^2 \leq \|\nabla\bu\|_H^2 \; \text{ for } \bu \in V,\\
\lambda_1 \|\nabla\bu\|_H^2 \leq \|A\bu\|_H^2 \; \text{ for } \bu \in \mathcal{D}(A),
\end{align*}
where $\lambda_1 = 4\pi^2$ is the first eigenvalue of the Stokes operator on $\Omega$.

Moreover, the bilinear operator, $B$, has the property that
\begin{align}
 \ip{B(\bu,\bv)}{\bw}_{V^*,V} = - \ip{B(\bu,\bw)}{\bv}_{V^*,V}
\end{align}
for all $\bu, \bv, \bw \in V$; below, we denote $\ip{\cdot}{\cdot} := \ip{\cdot}{\cdot}_{V^*,V}$.  This implies the following identity
\begin{align}
 \ip{B(\bu,\bv)}{\bv} = 0
\end{align}
for all $\bu, \bv \in V$.  Finally, the following standard inequalities hold (see, e.g., \cite{Constantin_Foias_1988,Foias_Manley_Rosa_Temam_2001,Robinson_2001,Temam_2001_Th_Num,Temam_1995_Fun_Anal})
\begin{align}
|\ip{B(\bu,\bv)}{\bw}| &\leq c \|\bu\|_H^{1/2}\|\bu\|_V^{1/2} \|\bv\|_V\|\bw\|_H^{1/2}\|\bw\|_V^{1/2}, \text{ for } \bu,\bv,\bw \in V, \label{BIN} \\
|\ip{B(\bu,\bv)}{\bw}| &\leq c \|\bu\|_H^{1/2}\|A\bu\|_H^{1/2} \|\bv\|_V\|\bw\|_H, \text{ for } \bu \in \mathcal{D}(A), \bv \in V, \bw \in H. \label{BIN3}
\end{align}

In this setting of 2D periodic boundary conditions, the following identity holds (see, e.g., \cite{Constantin_Foias_1988, Robinson_2001, Temam_2001_Th_Num})
\begin{align}\label{wwAw0}
 (B(\bw,\bw), A\bw) = 0 \text{ for every } \bw \in \mathcal{D}(A).
\end{align}
This implies the identity
\begin{align}\label{jacobi}
 (B(\bu,\bw),A\bw) + (B(\bw,\bu),A\bw) = - (B(\bw,\bw),A\bu) \text{ for all } \bu, \bw \in \mathcal{D}(A).
\end{align}

We assume that $\bbf \in L^\infty(0,T;H)$ so that $P_\sigma \bbf = \bbf$ (without loss of generality, since the gradient part of $\bbf$ can be absorbed into the pressure gradient).  Formally applying the Leray projection to \eqref{NSEtrue} yields the equivalent evolution system
\begin{align}\label{NSEtrue_leray}
 \bu_t + B(\bu,\bu) + \nu A\bu = \bbf,\\
 \bu(\bx,0) = \bu_0(\bx).
\end{align}

Similarly, the Leray projection can be formally applied to \eqref{linDA} to obtain the system
\begin{align}\label{linDA_Leray}
 \bv_t + B(\bv,\bv) + \nu A\bv &= \bbf + \mu P_\sigma(I_h(\bu)-I_h(\bv)),\\
 \bv(\bx,0) &= \bv_0(\bx).
\end{align}
and to \eqref{nonlinDA} to obtain the system
\begin{align}\label{nonlinDA_Leray}
 \bv_t + B(\bv,\bv) + \nu A\bv &= \bbf + \mu P_\sigma \mathcal{N}(I_h(\bu)-I_h(\bv)), 
 \\
   &\qquad + \beta P_\sigma(I_h(\bu)-I_h(\bv)),  \notag\\
 \bv(\bx,0) &= \bv_0(\bx).
\end{align}
where for all $\bw \in L^2(\Omega)$, we recall $\mathcal{N}(\bw) := \begin{cases} \|\bw\|_H^{-\gamma} \bw, & \|\bw\|_H \neq 0,\\
  0, & \|\bw\|_H= 0.
 \end{cases}$

 \noindent
 Note that, with this choice of $\mathcal{N}$, $\mu$ has units $(\text{length})^{2\gamma}/(\text{time})^{1+\gamma}$, whereas $\beta$ has units $1/(\text{time})$.

The pressure gradient can be recovered by employing the following corollary of de Rham's Theorem (see, e.g., \cite{Temam_2001_Th_Num,Foias_Manley_Rosa_Temam_2001,Wang_1993}):
\begin{align*}
 \bg = \nabla p \text{ with } p \text{ a distribution if and only if } \ip{\bg}{\bh} = 0 \text{ for all } \bh \in \mathcal{V}.
\end{align*}

Under this framework, we define the notion of a strong solution for the systems \eqref{NSEtrue_leray}, \eqref{linDA_Leray}, and \eqref{nonlinDA_Leray} (see, e.g., \cite{Constantin_Foias_1988, Robinson_2001, Temam_2001_Th_Num, Foias_Manley_Rosa_Temam_2001}).

\begin{definition}\label{strong_soln_def}
 A \textit{strong solution} of \eqref{NSEtrue_leray}, \eqref{linDA_Leray}, or \eqref{nonlinDA_Leray} is a function $\bu \in C([0,T];V) \cap L^2(0,T;\mathcal{D}(A))$ such that the equality in the system \eqref{NSEtrue_leray}, \eqref{linDA_Leray}, or \eqref{nonlinDA_Leray} is satisfied in $L^2(0,T;H)$, and its time derivative $\frac{d\bu}{dt} \in L^2(0,T;H)$.
\end{definition}

We cite the classical result of the existence of global strong solutions for \eqref{NSEtrue}

\begin{theorem}\label{strong_soln_NSE}
Given initial data $\bu_0 \in V$ and a forcing function $\bbf \in L^2(0,T;H)$, there exists a unique strong solution to \eqref{NSEtrue_leray} $\bu$ such that $\bu \in C([0,T];V) \cap L^2(0,T;\mathcal{D}(A))$ and $\frac{d\bu}{dt} \in L^2(0,T;H)$.
\end{theorem}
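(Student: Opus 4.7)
The plan is to follow the classical Galerkin scheme, since the statement is the standard global strong-solution existence and uniqueness theorem for the 2D Navier-Stokes equations with periodic boundary conditions, and the preliminaries already collect all of the structural ingredients needed ($L^2$-orthogonal eigenbasis $\{\bw_i\}$ of $A$, the identity \eqref{wwAw0}, the bilinear estimates \eqref{BIN}--\eqref{BIN3}, and the bases-of-de-Rham recovery of the pressure). First, I would set $P_n$ to be the orthogonal projection onto $\operatorname{span}\{\bw_1,\ldots,\bw_n\}$ and consider the Galerkin system
\begin{equation*}
\frac{d\bu_n}{dt} + \nu A\bu_n + P_n B(\bu_n,\bu_n) = P_n\bbf, \qquad \bu_n(0) = P_n \bu_0,
\end{equation*}
which is a locally Lipschitz ODE on the finite-dimensional space $P_n H$, hence uniquely solvable on some maximal interval by Picard--Lindel\"of.

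Next I would derive the a priori estimates needed to extend $\bu_n$ to $[0,T]$ and to pass to the limit. The $H$-energy estimate (testing against $\bu_n$) gives a uniform $L^\infty(0,T;H)\cap L^2(0,T;V)$ bound using $\ip{B(\bu_n,\bu_n)}{\bu_n}=0$, Cauchy--Schwarz, and Young's inequality on $(\bbf,\bu_n)$. For the strong bound, I would test with $A\bu_n$: the 2D periodic identity \eqref{wwAw0} kills the trilinear term, leaving
\begin{equation*}
\frac{1}{2}\frac{d}{dt}\|\bu_n\|_V^2 + \nu\|A\bu_n\|_H^2 = (\bbf,A\bu_n) \leq \frac{\nu}{2}\|A\bu_n\|_H^2 + \frac{1}{2\nu}\|\bbf\|_H^2,
\end{equation*}
so Gronwall (or direct integration) yields uniform bounds on $\bu_n$ in $L^\infty(0,T;V)\cap L^2(0,T;\mathcal{D}(A))$. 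Using \eqref{BIN3} (with $\bu=\bv=\bu_n$) then gives a uniform $L^2(0,T;H)$ bound on $B(\bu_n,\bu_n)$, hence on $\frac{d\bu_n}{dt}$ in $L^2(0,T;H)$.

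With these bounds in hand, I would extract a subsequence (still denoted $\bu_n$) converging weakly-$*$ in $L^\infty(0,T;V)$, weakly in $L^2(0,T;\mathcal{D}(A))$, and with $\frac{d}{dt}\bu_n\rightharpoonup \frac{d}{dt}\bu$ weakly in $L^2(0,T;H)$; the Aubin--Lions lemma (with $\mathcal{D}(A)\hookrightarrow V \hookrightarrow H$, the first embedding compact) gives strong convergence in $L^2(0,T;V)$, which is exactly what is required to pass to the limit in the bilinear term via the standard estimate $\|B(\bu_n,\bu_n)-B(\bu,\bu)\|_{V^*}\leq c(\|\bu_n\|_V+\|\bu\|_V)\|\bu_n-\bu\|_V$. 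The limit $\bu$ then satisfies \eqref{NSEtrue_leray} in $L^2(0,T;H)$, and the fact that $\bu\in C([0,T];V)$ follows from $\bu\in L^2(0,T;\mathcal{D}(A))$ with $\frac{d\bu}{dt}\in L^2(0,T;H)$ via the Lions--Magenes lemma. The pressure is recovered from the de Rham corollary stated in the text.

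For uniqueness, I would let $\bu_1,\bu_2$ be two strong solutions with the same initial data, set $\bw=\bu_1-\bu_2$, and write
\begin{equation*}
\bw_t + \nu A\bw + B(\bw,\bu_1) + B(\bu_2,\bw) = 0.
\end{equation*}
Pairing with $\bw$, using $\ip{B(\bu_2,\bw)}{\bw}=0$ and estimate \eqref{BIN} to handle $|\ip{B(\bw,\bu_1)}{\bw}|\le c\|\bw\|_H\|\bw\|_V\|\bu_1\|_V$, and absorbing via Young's inequality, yields
\begin{equation*}
\frac{d}{dt}\|\bw\|_H^2 + \nu\|\bw\|_V^2 \leq \frac{c^2}{\nu}\|\bu_1\|_V^2\,\|\bw\|_H^2,
\end{equation*}
and since $\|\bu_1\|_V^2\in L^1(0,T)$, Gronwall's inequality with $\bw(0)=0$ forces $\bw\equiv 0$, hence continuous dependence on the data as well. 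I expect the only mild technical obstacle to be the strong-convergence step in passing to the limit in the nonlinearity, where one needs the compactness from Aubin--Lions coupled with the $V^*$-estimate above; everything else is an application of the already-listed identities and inequalities.
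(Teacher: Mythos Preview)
Your proposal is correct and is exactly the standard Galerkin argument for global strong solutions of the 2D periodic Navier--Stokes equations. However, the paper does not actually prove Theorem \ref{strong_soln_NSE}: it is stated as a citation of the classical result (``We cite the classical result of the existence of global strong solutions for \eqref{NSEtrue}''), with no proof given. So there is nothing to compare against; your write-up simply supplies the omitted classical proof, and the key simplification you exploit---that \eqref{wwAw0} eliminates the trilinear term in the $A\bu_n$ estimate---is precisely why the 2D periodic case is so clean.
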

 
For \eqref{NSEtrue_leray}, we denote the dimensionless Grashof number as
\begin{align}
G &= \frac{1}{\lambda_1\nu^2} \limsup\limits_{t \to \infty} \|\mathbf{f}(t)\|_H \label {G}.
\end{align}

The following theorem, see, e.g., \cite{Robinson_2001, Temam_2001_Th_Num, Constantin_Foias_1988, Foias_Manley_Rosa_Temam_2001, Dascaliuc_Foias_Jolly_2010}, details results on the large-time behavior of the strong solutions to \eqref{NSEtrue}.

\begin{theorem}\label{ubounds}
Fix $T >0$. Suppose that $\bu$ is a strong solution of $\eqref{NSEtrue}$, corresponding to the initial data $\bu_0 \in V$. Then there exists a time $t_0 \geq 0$ which depends on $\|\bu_0\|_H$ such that for all $t \geq t_0$,
\begin{align}
\|\bu(t)\|_H^2 \leq 2G^2\nu^2  \; \text{ and } \int\limits_t^{t+T} \|\bu(\tau)\|_V^2 d\tau \leq 2\left(1+T\lambda_1\nu\right) G^2\nu,
\end{align}
moreover 
\begin{align}
\| \bu(t)\|_V^2 \leq 2\lambda_1G^2\nu^2, \qquad \int\limits_t^{t+T} \|A\bu(\tau)\|_H^2 d\tau \leq 2\left(1+T\lambda_1\nu\right) \lambda_1G^2\nu.
\end{align}
Furthermore, if $\mathbf{f} \in H$ is time-independent then
\begin{align}\label{Au L2 bound}
\|A\bu(t)\|_H^2 \leq \lambda_1^2\nu^2c (1+G)^4.
\end{align}
\end{theorem}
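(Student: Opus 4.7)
The three families of bounds are the standard energy, enstrophy, and higher-order a priori estimates for the 2D Navier--Stokes equations; my plan is to obtain them by testing \eqref{NSEtrue_leray} successively against $\bu$, $A\bu$, and the time derivative $\bu_t$, combined with classical and uniform Gronwall arguments.

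\emph{Step 1: $H$ bounds.} Taking the $H$ inner product of \eqref{NSEtrue_leray} with $\bu$, the trilinear term $\ip{B(\bu,\bu)}{\bu}$ vanishes, and Cauchy--Schwarz together with Young's inequality on the forcing term produces
\[ \frac{d}{dt}\|\bu\|_H^2 + \nu\|\bu\|_V^2 \leq \frac{1}{\lambda_1\nu}\|\bbf\|_H^2. \]
Combining with Poincar\'e $\lambda_1\|\bu\|_H^2\leq\|\bu\|_V^2$ and the classical Gronwall inequality, I obtain exponential absorption into the ball $\|\bu\|_H^2\leq 2G^2\nu^2$ starting from a time $t_0$ that depends only on $\|\bu_0\|_H$ and $G$. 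Integrating the same differential inequality over $[t,t+T]$ for $t\geq t_0$ and using the pointwise bound at $t$ gives the claimed bound on $\int_t^{t+T}\|\bu(\tau)\|_V^2\,d\tau$.

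\emph{Step 2: $V$ bounds.} Taking the $H$ inner product of \eqref{NSEtrue_leray} with $A\bu$, I exploit the periodic cancellation identity \eqref{wwAw0}, $(B(\bu,\bu),A\bu)=0$, to eliminate the nonlinearity entirely; Young's inequality on $(\bbf,A\bu)$ then yields
\[ \frac{d}{dt}\|\bu\|_V^2 + \nu\|A\bu\|_H^2 \leq \frac{1}{\nu}\|\bbf\|_H^2. \]
Applying the uniform Gronwall lemma with the $L^1$-in-time bound on $\|\bu\|_V^2$ from Step~1 produces the $V$-absorbing estimate $\|\bu(t)\|_V^2\leq 2\lambda_1 G^2\nu^2$ after (possibly enlarging) $t_0$, and integrating the above over $[t,t+T]$ then gives the claimed bound on $\int_t^{t+T}\|A\bu(\tau)\|_H^2\,d\tau$.

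\emph{Step 3: $\mathcal{D}(A)$ bound for time-independent $\bbf$.} When $\bbf_t=0$, differentiating the NSE in time gives $\bu_{tt}+B(\bu_t,\bu)+B(\bu,\bu_t)+\nu A\bu_t=0$. Testing with $\bu_t$, the identity $\ip{B(\bu,\bu_t)}{\bu_t}=0$ eliminates one trilinear term, and \eqref{BIN} with Young controls the other, producing
\[ \frac{d}{dt}\|\bu_t\|_H^2 + \nu\|\bu_t\|_V^2 \leq \frac{c}{\nu}\|\bu\|_V^2\,\|\bu_t\|_H^2. \]
Independently, testing the original NSE with $\bu_t$ and using the 2D Ladyzhenskaya inequality $\|\bu\|_{L^4}\leq c\|\bu\|_H^{1/2}\|\bu\|_V^{1/2}$ to handle $(B(\bu,\bu),\bu_t)$ furnishes a local-in-time $L^1$ bound on $\|\bu_t\|_H^2$ with the correct $G$-scaling. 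The uniform Gronwall lemma applied to the differential inequality above then yields a pointwise bound on $\|\bu_t(t)\|_H^2$ polynomial in $G$. Finally, rewriting the NSE as $\nu A\bu=\bbf-\bu_t-B(\bu,\bu)$, taking $H$-norms, and using Agmon's inequality $\|B(\bu,\bu)\|_H\leq c\|\bu\|_V^{3/2}\|A\bu\|_H^{1/2}$ together with Young's inequality to absorb $\|A\bu\|_H$ onto the left-hand side, I obtain the quartic-in-$G$ bound \eqref{Au L2 bound}.

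\emph{Main obstacle.} The delicate part is Step~3: closing the $\|A\bu\|_H$ estimate requires uniformly controlling $\|\bu_t\|_H$, which in turn requires time-differentiating the equation. Carefully tracking how the $V$-absorbing radius (itself scaling as $G^2$) propagates through the quadratic nonlinearity---to land precisely on the stated $(1+G)^4$ dependence rather than a worse polynomial---is the core technical hurdle, and relies essentially on $\bbf_t=0$ to avoid a forcing contribution in the differentiated equation.
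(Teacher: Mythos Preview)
The paper does not supply its own proof of this theorem: it is stated as a citation of standard results (Robinson, Temam, Constantin--Foias, Foias--Manley--Rosa--Temam, Dascaliuc--Foias--Jolly). Your three-step programme---energy estimate against $\bu$, enstrophy estimate against $A\bu$ exploiting \eqref{wwAw0}, and the $\mathcal{D}(A)$ bound via time-differentiation when $\bbf_t=0$---is precisely the classical argument found in those references, so your proposal is correct and aligned with the intended source.

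One minor caution on Step~3: when you test the original equation against $\bu_t$ to harvest an $L^1_t$ bound on $\|\bu_t\|_H^2$, the Ladyzhenskaya bound on $(B(\bu,\bu),\bu_t)$ in fact produces a factor of $\|A\bu\|_H^{1/2}$ on the right (since $\|\nabla\bu\|_{L^4}$ costs half a derivative), so you must invoke the time-integrated control of $\|A\bu\|_H^2$ already obtained in Step~2 rather than treat it as a closed estimate in $V$ alone. This is harmless for the argument but worth stating explicitly. Also, landing exactly on the $(1+G)^4$ constant requires careful bookkeeping of the Young and Gronwall constants; the references you would be reproducing (notably Dascaliuc--Foias--Jolly) do this tracking, and your outline is consistent with it.
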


We assume throughout the present work that the  operator $I_h$ is a linear operator satisfying the following conditions
\begin{subequations}\label{Ih_always_assumptions}
\begin{align}
\label{Ih_interp_bound_1}
	\|\bphi-I_h(\bphi)\|_{L^2(\Omega)}^2 &\leq c_0h^2\|\nabla\bphi\|_{L^2(\Omega)}^2, \text{ for all } \bphi \in \dot{H}^1(\Omega),\\
  \label{Ih_mean_free}
\int_\Omega I_h(\bphi) &= 0\text{ whenever }\int_\Omega \bphi = 0.
\end{align}
\end{subequations}
In our theorems, we make various additional assumptions about the interpolant $I_h$.   We record these here for reference, though we note that some of our theorems hypothesize only a subset of these assumptions.
\begin{subequations}
\begin{align}
  \label{Ih_projection}
 I_h^2 &= I_h,\\
 \label{Ih_symmetric}
     (I_h(\bphi), \bpsi) &= (\bphi, I_h(\bpsi)) \text{ for all }\bphi, \bpsi \in \dot{L}^2(\Omega),\\
\label{Ih_positive}
 (I_h\bphi,\bphi) &\geq 0 \text{ for all } \bphi \in \dot{L}^2(\Omega),\\
 \label{Ih_bounded}
 \|I_h\bphi\|_{L^2(\Omega)} &\leq \alpha\|\bphi\|_{L^2(\Omega)}  \text{ for some }\alpha>0 \text{ and for all } \bphi \in \dot{L}^2(\Omega),\\
 \label{Ih_A_positive}
 (I_h\bphi,A\bphi) &\geq 0 \text{ and for all } \bphi \in \mathcal{D}(A).
 \end{align}  
\end{subequations}
Note that Fourier truncation and local averaging over finite volume elements are both operators that
satisfy \eqref{Ih_always_assumptions} (see, e.g., \cite{Azouani_Olson_Titi_2014, Cockburn_Jones_Titi_1997, Jones_Titi_1993, Brenner_Scott_FE_2008}).  There seems to be a technical constraint on allowing more general interpolants, cf. Remark \ref{type2_interp_rmk}.
Since we are working in a mean-free space Poincar{\'e}'s inequality applies, and   combined with \eqref{Ih_always_assumptions} we have the following bound on $\|I_h(\bphi)\|_{L^2(\Omega)}$:
\begin{align}\label{interp_bound_2}
 \|I_h(\bphi)\|_{L^2(\Omega)} &\leq \|\bphi-I_h(\bphi)\|_{L^2(\Omega)} + \|\bphi\|_{L^2(\Omega)}, \notag\\
 &\leq \sqrt{c_0}h\|\nabla\bphi\|_{L^2(\Omega)} + \lambda_1^{-1/2} \|\nabla\bphi\|_{L^2(\Omega)}, \notag\\
 &= (\sqrt{c_0}h + \lambda_1^{-1/2}) \|\nabla\bphi\|_{L^2(\Omega)}.
\end{align}
For uniqueness of solutions to the nonlinear-nudging system and the convergence of the solutions of the nonlinear-nudging system to the unknown reference solution of \eqref{NSEtrue}, we will make various assumptions on the linear interpolant $I_h$, namely \eqref{Ih_positive} and \eqref{Ih_bounded} above. 

For reference, we state the existence and convergence theorems of the linear-nudging CDA algorithm as proved in \cite{Azouani_Olson_Titi_2014}.

\begin{theorem}\cite{Azouani_Olson_Titi_2014}
 Suppose $I_h$ satisfies \eqref{Ih_always_assumptions}, $\bv_0 \in V$, and $\mu c_0h^2 \leq \nu$, where $c_0$ is the constant in \eqref{Ih_always_assumptions}.  Then system \eqref{linDA_Leray} has a unique strong solution such that \[\bv \in C([0,T];V) \cap L^2(0,T;\mathcal{D}(A)) \text{ and } \frac{d\bv}{dt} \in L^2(0,T;H),\]
 for any $T> 0$.  Furthermore, this solution depends continuously on the initial data $\bv_0$ in the $V$ norm.
\end{theorem}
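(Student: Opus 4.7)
My plan is to prove existence via a standard Faedo--Galerkin approximation scheme paralleling the classical existence theory for 2D Navier--Stokes, with special care taken to absorb the nudging feedback into the viscous dissipation via the structural hypothesis $\mu c_0 h^2 \leq \nu$. Let $P_m$ denote the orthogonal projection onto the span of the first $m$ eigenfunctions of $A$. I would project \eqref{linDA_Leray} onto this subspace to obtain a finite-dimensional ODE system for $\bv_m = P_m \bv$ whose right-hand side is locally Lipschitz (the nonlinearity is quadratic, and $I_h$ is a bounded linear operator on $L^2$ by \eqref{interp_bound_2}); Picard--Lindel\"of then yields a local solution $\bv_m \in C^1([0,T_m); P_m H)$. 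Linearity of $I_h$ provides the identity $I_h(\bu) - I_h(\bv_m) = I_h(\bu - \bv_m)$, which I use throughout.

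For the a priori estimates, I would pair the Galerkin equation first with $\bv_m$, using $\ip{B(\bv_m,\bv_m)}{\bv_m}=0$, and then with $A\bv_m$, using the 2D periodic identity \eqref{wwAw0} to eliminate the convective contribution. In each case the nudging term is decomposed as
\begin{equation*}
 \mu\bigl(I_h(\bu-\bv_m) - (\bu-\bv_m),\, \cdot\,\bigr) \;+\; \mu(\bu - \bv_m,\, \cdot\,),
\end{equation*}
the first piece being controlled by \eqref{Ih_interp_bound_1} and Young's inequality; the hypothesis $\mu c_0 h^2 \leq \nu$ is exactly what is required in order to absorb it into $\nu \|\bv_m\|_V^2$ (respectively $\nu \|A\bv_m\|_H^2$). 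The remaining pieces are handled by Cauchy--Schwarz together with the $H$, $V$, and $\mathcal{D}(A)$ bounds on the reference solution $\bu$ supplied by Theorem \ref{ubounds}. Gr\"onwall then yields uniform-in-$m$ bounds on $\bv_m$ in $L^\infty(0,T;H) \cap L^2(0,T;V)$ and in $L^\infty(0,T;V) \cap L^2(0,T;\mathcal{D}(A))$, extending $T_m$ to an arbitrary $T>0$; substituting back into the equation yields a uniform $L^2(0,T;H)$ bound on $\tfrac{d\bv_m}{dt}$.

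By Banach--Alaoglu and the Aubin--Lions compactness theorem, a subsequence converges weakly-$*$ in $L^\infty(0,T;V)$, weakly in $L^2(0,T;\mathcal{D}(A))$, and strongly in $L^2(0,T;V)$ to a limit $\bv$; the strong convergence disposes of the quadratic term $B(\bv_m,\bv_m)$, while bounded linearity of $I_h$ handles the nudging term, so that $\bv$ satisfies \eqref{linDA_Leray} in $L^2(0,T;H)$. The Lions--Magenes lemma applied to $\bv \in L^2(0,T;\mathcal{D}(A))$ with $\bv_t \in L^2(0,T;H)$ then yields $\bv \in C([0,T];V)$. For uniqueness and $V$-continuous dependence, let $\bz = \bv^{(1)} - \bv^{(2)}$ be the difference of two strong solutions with (possibly distinct) initial data; then
\begin{equation*}
 \bz_t + B(\bv^{(1)},\bv^{(1)}) - B(\bv^{(2)},\bv^{(2)}) + \nu A\bz + \mu P_\sigma I_h \bz = 0.
\end{equation*}
Pairing with $A\bz$, the Jacobi-type identity \eqref{jacobi} together with \eqref{wwAw0} collapses the convective contribution to $-\ip{B(\bz,\bz)}{A\bv^{(1)}}$, which is controlled by \eqref{BIN3} and Young's inequality; the nudging term is split and absorbed exactly as in the a priori estimate above. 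The result is a differential inequality $\tfrac{d}{dt}\|\bz\|_V^2 \leq C(t)\|\bz\|_V^2$ with $C \in L^1(0,T)$, from which Gr\"onwall delivers both uniqueness (when $\bz(0)=0$) and $V$-continuous dependence.

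The main obstacle is to close these estimates in the presence of the nonlocal nudging feedback: the condition $\mu c_0 h^2 \leq \nu$ is precisely tuned so that the interpolation mismatch $I_h(\bphi) - \bphi$, which costs one spatial derivative via \eqref{Ih_interp_bound_1}, can be absorbed into the viscous dissipation; without it neither the $V$ nor the $\mathcal{D}(A)$ estimate closes. A secondary subtlety is the $V$-level continuous dependence, where the 2D identity \eqref{jacobi} is essential in order to keep the convective term from requiring more regularity of $\bz$ than is available from the energy method alone.
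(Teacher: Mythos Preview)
The paper does not supply its own proof of this theorem: it is quoted verbatim as a preliminary result from \cite{Azouani_Olson_Titi_2014} and used without argument. Your Faedo--Galerkin outline is correct and is essentially the method of the original reference---a priori $H$ and $V$ estimates with the interpolation mismatch absorbed into the dissipation via $\mu c_0 h^2 \leq \nu$, Aubin--Lions compactness to pass to the limit, and a Gr\"onwall argument on the difference equation (using \eqref{jacobi} at the $V$ level) for uniqueness and continuous dependence---so there is nothing substantive to compare.
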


\begin{theorem}\label{CDA_Hnorm}\cite{Azouani_Olson_Titi_2014}
 Let $\Omega = \mathbb{T}^2$ and let $\bu$ be a solution to \eqref{NSEtrue_leray} with periodic boundary conditions.  Let $I_h$ satisfy \eqref{Ih_always_assumptions}.  Then provided $\mu \geq 5\lambda_1c^2\nu G^2$ and $h \leq \left( \frac{1}{10\lambda_1c^2c_0G^2}\right)^{1/2}$, for any $\bv_0 \in V$ the solution to \eqref{linDA_Leray} converges to the solution of \eqref{NSEtrue_leray} in $H$ exponentially fast as $t \to \infty$, with exponential rate $\mu/2$.
\end{theorem}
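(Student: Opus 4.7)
The plan is to form the error $\bw := \bu - \bv$ and derive an energy inequality for $\|\bw\|_H^2$. Subtracting \eqref{linDA_Leray} from \eqref{NSEtrue_leray} and expanding $B(\bu,\bu)-B(\bv,\bv) = B(\bw,\bu) + B(\bv,\bw)$ by bilinearity gives
\begin{align*}
 \bw_t + B(\bw,\bu) + B(\bv,\bw) + \nu A\bw = -\mu P_\sigma I_h(\bw).
\end{align*}
Taking the $L^2$ inner product with $\bw$ and invoking $\ip{B(\bv,\bw)}{\bw} = 0$, which follows from the antisymmetry of $B$, reduces the problem to the scalar energy relation
\begin{align*}
 \tfrac{1}{2}\tfrac{d}{dt}\|\bw\|_H^2 + \nu\|\bw\|_V^2 + \mu(I_h(\bw),\bw) = -\ip{B(\bw,\bu)}{\bw}.
\end{align*}

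Next I would estimate the two nontrivial terms. For the feedback term, the decomposition $\mu(I_h(\bw),\bw) = \mu\|\bw\|_H^2 + \mu(I_h(\bw)-\bw,\bw)$ together with \eqref{Ih_interp_bound_1} and Young's inequality yields
\begin{align*}
 |\mu(I_h(\bw)-\bw,\bw)| \leq \mu\sqrt{c_0}\,h\,\|\bw\|_V\|\bw\|_H \leq \tfrac{\nu}{4}\|\bw\|_V^2 + \tfrac{\mu^2 c_0 h^2}{\nu}\|\bw\|_H^2.
\end{align*}
For the nonlinear term, the two-dimensional estimate \eqref{BIN} gives $|\ip{B(\bw,\bu)}{\bw}| \leq c\|\bw\|_H\|\bw\|_V\|\bu\|_V$, and Young's inequality again extracts a $\tfrac{\nu}{4}\|\bw\|_V^2$, leaving the term $\tfrac{c^2}{\nu}\|\bu\|_V^2\|\bw\|_H^2$. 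Absorbing the two $\tfrac{\nu}{4}\|\bw\|_V^2$ contributions into $\nu\|\bw\|_V^2$ and dropping the remaining $\tfrac{\nu}{2}\|\bw\|_V^2 \geq 0$ would produce the differential inequality
\begin{align*}
 \tfrac{d}{dt}\|\bw\|_H^2 \leq \left(-2\mu + \tfrac{2\mu^2 c_0 h^2}{\nu} + \tfrac{2c^2}{\nu}\|\bu(t)\|_V^2\right)\|\bw\|_H^2.
\end{align*}

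The final step is to exploit Theorem \ref{ubounds}, which furnishes $t_0 \geq 0$ such that $\|\bu(t)\|_V^2 \leq 2\lambda_1 G^2\nu^2$ for all $t \geq t_0$. Plugging this in and using the hypotheses $\mu \geq 5\lambda_1 c^2\nu G^2$ and $h^2 \leq (10\lambda_1 c^2 c_0 G^2)^{-1}$ (with the $\mu$ in the quadratic interpolation term chosen at or near its lower-bound value, as is standard in the AOT framework), the coefficient on the right-hand side can be shown to be bounded by $-\mu/2$ for all $t \geq t_0$. Grönwall's inequality on $[t_0,t]$ then delivers $\|\bw(t)\|_H^2 \leq \|\bw(t_0)\|_H^2 e^{-\mu(t-t_0)/2}$, proving the claim.

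The main obstacle I anticipate is the delicate balancing of constants: the feedback parameter $\mu$ enters quadratically in the interpolation error bound, so one must check that the interplay $\mu^2 c_0 h^2 \lesssim \mu\nu$ is compatible with the independent requirement $\mu \gtrsim \nu\lambda_1 G^2$ needed to dominate the transport term. The stated upper bound on $h^2$, scaling like $(\lambda_1 G^2)^{-1}$, is precisely what is forced by this bookkeeping. The restriction to $t \geq t_0$ is not a genuine obstacle since the well-posedness of $\bv$ combined with Theorem \ref{strong_soln_NSE} yields continuous control of $\|\bw\|_H$ on $[0,t_0]$, so the exponential tail on $[t_0,\infty)$ suffices for the asymptotic statement.
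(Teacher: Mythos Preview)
The paper does not prove this theorem; it is quoted from \cite{Azouani_Olson_Titi_2014} as background. Your outline follows the standard AOT energy argument and is correct in its architecture, but there is one genuine gap in the way you handle the feedback term.

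Your Young splitting of $\mu(I_h(\bw)-\bw,\bw)$ produces the contribution $\tfrac{\mu^2 c_0 h^2}{\nu}\|\bw\|_H^2$, which is \emph{quadratic} in $\mu$. Since the hypothesis allows $\mu$ to be arbitrarily large (only $\mu\geq 5\lambda_1 c^2\nu G^2$ is assumed), this term can overwhelm the favorable $-2\mu$ term, and your differential inequality fails to close. You acknowledge this and suggest taking $\mu$ ``at or near its lower-bound value,'' but $\mu$ is a given parameter, not yours to choose, so this is not a valid step.

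The fix is to split differently: use Young's inequality in the form
\[
\mu\sqrt{c_0}\,h\,\|\bw\|_V\|\bw\|_H \;\leq\; \tfrac{\mu c_0 h^2}{2}\|\bw\|_V^2 + \tfrac{\mu}{2}\|\bw\|_H^2,
\]
so that the $\|\bw\|_V^2$ coefficient is linear in $\mu$ and can be absorbed by the viscosity precisely under the AOT compatibility condition $\mu c_0 h^2 \leq \nu$ (which is the actual hypothesis in \cite{Azouani_Olson_Titi_2014}; the separate bounds on $\mu$ and $h$ quoted here are consistent with it when $\mu$ sits at its lower threshold). This leaves $-\tfrac{\mu}{2}\|\bw\|_H^2$ from the feedback, and together with your estimate on the transport term and the large-time bound on $\|\bu\|_V^2$ from Theorem~\ref{ubounds}, the coefficient becomes $\leq -\mu/2$ for $t\geq t_0$, after which Gr\"onwall finishes as you describe.
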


\begin{theorem}\label{CDA_Vnorm}\cite{Azouani_Olson_Titi_2014}
 Let $\Omega = \mathbb{T}^2$ and let $\bu$ be a solution to \eqref{NSEtrue_leray} with periodic boundary conditions.  Let $I_h$ satisfy \eqref{Ih_always_assumptions}.
 Then provided $\mu c_0h^2 \leq \nu$ and $\mu \geq 3\nu\lambda_1(2c\log(2c^{3/2} + 8c\log(1+G))G$ where $c$ is a constant dependent on the size of the domain, for any $\bv_0 \in V$ the solution to \eqref{linDA_Leray} converges to the solution of \eqref{NSEtrue_leray} in $V$ exponentially fast as $t \to \infty$, with exponential rate $\mu/2$.
\end{theorem}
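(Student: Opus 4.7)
The plan is to control the error $\bw := \bu - \bv$ in the $V$-norm by obtaining an energy estimate of the form $\tfrac{d}{dt}\|\bw\|_V^2 + \mu\|\bw\|_V^2 \leq 0$ for $t$ beyond the transient time $t_0$ from Theorem \ref{ubounds}. First I would subtract \eqref{linDA_Leray} from \eqref{NSEtrue_leray} to obtain the error equation
\[
\bw_t + B(\bu,\bu) - B(\bv,\bv) + \nu A\bw + \mu P_\sigma I_h(\bw) = 0,
\]
and then pair it with $A\bw$ to work in the $V$-norm. Using the periodic identity \eqref{wwAw0} for $\bv$ together with the Jacobi-type identity \eqref{jacobi} to rewrite the nonlinear difference yields
\[
\tfrac{1}{2}\tfrac{d}{dt}\|\bw\|_V^2 + \nu\|A\bw\|_H^2 + \mu(I_h\bw, A\bw) = (B(\bw,\bw), A\bu).
\]

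Next I would extract coercivity from the feedback term by writing $I_h\bw = \bw - (\bw - I_h\bw)$ and invoking the interpolation bound \eqref{Ih_interp_bound_1} combined with Young's inequality. Under the hypothesis $\mu c_0 h^2 \leq \nu$, half of the viscous dissipation absorbs the error, and the remaining feedback contribution is at least $\tfrac{\mu}{2}\|\bw\|_V^2$. Thus the estimate reduces to
\[
\tfrac{d}{dt}\|\bw\|_V^2 + \nu\|A\bw\|_H^2 + \mu\|\bw\|_V^2 \leq 2\,|(B(\bw,\bw), A\bu)|.
\]

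The main obstacle is bounding the right-hand side. I would apply H\"older's inequality to get $|(B(\bw,\bw),A\bu)| \leq \|\bw\|_{L^\infty}\|\bw\|_V\|A\bu\|_H$ and then invoke a 2D Brezis--Gallouet-type logarithmic estimate
\[
\|\bw\|_{L^\infty}^2 \leq c\,\|\bw\|_V^2\,\log\!\left(1 + \tfrac{\|A\bw\|_H^2}{\lambda_1\|\bw\|_V^2}\right).
\]
Using Theorem \ref{ubounds} to bound $\|A\bu(t)\|_H$ in terms of $\lambda_1^{1/2}\nu(1+G)^2$ after time $t_0$, I would then split into two regimes: when $\|A\bw\|_H^2/(\lambda_1\|\bw\|_V^2)$ is below a threshold of order $(1+G)^\alpha$, the $\nu\|A\bw\|_H^2$ term directly absorbs the nonlinear contribution via Young's inequality; when it is above, the logarithm is controlled by a multiple of $\log(1+G)$, producing a bound of the shape $c\,\nu\lambda_1 G\log(1+G)\,\|\bw\|_V^2$ on the right-hand side. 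Balancing the two cases and choosing the threshold is the delicate calculation that dictates the precise log-corrected lower bound on $\mu$ in the hypothesis.

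With $\mu$ taken larger than that threshold, the right-hand side is absorbed by $\tfrac{\mu}{2}\|\bw\|_V^2$ on the left, leaving
\[
\tfrac{d}{dt}\|\bw\|_V^2 + \mu\|\bw\|_V^2 \leq 0 \quad \text{for } t \geq t_0.
\]
A direct application of Gronwall's inequality then gives $\|\bw(t)\|_V^2 \leq \|\bw(t_0)\|_V^2\,e^{-\mu(t-t_0)}$, which yields exponential decay in the $V$-norm at rate $\mu/2$, completing the proposed argument.
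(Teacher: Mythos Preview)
This theorem is stated in the paper as a citation from \cite{Azouani_Olson_Titi_2014} and is not proved here; it appears in Section~\ref{secPre} as a preliminary result. So there is no ``paper's own proof'' to compare against directly. That said, your sketch is faithful to the original Azouani--Olson--Titi argument, and in fact the paper invokes precisely this computation inside the proof of Theorem~\ref{second_conv_theorem}, where it records the resulting differential inequality in the form
\[
\frac{d}{dt}\|\bw\|_V^2 + \tfrac12\Big[\mu - \tfrac{J^2}{\mu}\|A\bu\|_H^2\Big]\|\bw\|_V^2 \le 0,
\qquad J = 2c\log(2c^{3/2}) + 4c\log(1+G).
\]
Your outline and this form differ only cosmetically: you aim to absorb the Brezis--Gallouet term pointwise into $\tfrac{\mu}{2}\|\bw\|_V^2$, whereas the version the paper quotes carries $\|A\bu\|_H^2$ explicitly and closes via a uniform Gronwall using the time-integrated bound on $\|A\bu\|_H^2$ from Theorem~\ref{ubounds}. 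Either packaging yields the stated exponential rate under the hypothesis on $\mu$; your derivation of the feedback coercivity and the use of \eqref{jacobi} and the Brezis--Gallouet inequality are exactly the ingredients the cited proof uses.
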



%
%
%

%
%

We will also employ the following elementary lemma, the proof of which is in the Appendix.



\begin{lemma}\label{small_h}
 Fix $\gamma\in(0,1)$ and $\epsilon >0$.  Given $a > 0$ and  \[\delta := \min\left\{ a/2, a^{\frac{2-\gamma}{2}}\lp \frac{\epsilon}{\lp\frac{2-\gamma}{2}\rp^{\frac{2-\gamma}{\gamma}} - \lp\frac{2-\gamma}{2}\rp^{\frac{2}{\gamma}}} \rp^{\frac{\gamma}{2}}\right\},\] then the function $f:[0,\infty)\mapsto\nR$ defined by \[f(x) = ax^2 - \delta x^{2-\gamma},\] satisfies $f(x) \geq -\epsilon$ for all $x\geq0$.
\end{lemma}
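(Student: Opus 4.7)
My plan is to treat this as a one-variable optimization: minimize $f$ on $[0,\infty)$, locate the unique interior minimizer via elementary calculus, evaluate $f$ there, and then check that the hypothesis on $\delta$ is precisely what is needed to force this minimum value to be $\geq -\epsilon$.

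First I would note that $f(0)=0$, and since $\gamma\in(0,1)$ gives $2-\gamma<2$, we have $f(x)=x^{2-\gamma}(ax^{\gamma}-\delta)\to +\infty$ as $x\to\infty$. Thus $f$ attains its infimum on $[0,\infty)$ at some finite point. Differentiating,
\[
 f'(x) = 2ax - (2-\gamma)\delta x^{1-\gamma} = x^{1-\gamma}\bigl(2ax^{\gamma}-(2-\gamma)\delta\bigr),
\]
which has its only positive zero at
\[
 x^{*} = \left(\frac{(2-\gamma)\delta}{2a}\right)^{1/\gamma}.
\]
Sign analysis of $f'$ shows $f$ is decreasing on $(0,x^{*})$ and increasing on $(x^{*},\infty)$, so $x^{*}$ is the global minimizer.

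Next, I would substitute $x^{*}$ into $f$. Writing $c := \frac{2-\gamma}{2}$, a straightforward calculation of the two terms $a(x^{*})^{2}$ and $\delta(x^{*})^{2-\gamma}$ gives
\[
 f(x^{*}) = -\,\delta^{2/\gamma}\, a^{(\gamma-2)/\gamma}\,\Bigl(c^{(2-\gamma)/\gamma}-c^{2/\gamma}\Bigr),
\]
after using the identity $1 + (2-\gamma)/\gamma = 2/\gamma$. Note that since $\gamma\in(0,1)$ we have $c=\frac{2-\gamma}{2}\in(\tfrac12,1)$ and $(2-\gamma)/\gamma<2/\gamma$, so raising the base $c<1$ to the larger exponent produces a smaller number; hence the bracket $c^{(2-\gamma)/\gamma}-c^{2/\gamma}$ is strictly positive, confirming $f(x^{*})<0$, as expected.

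Finally, the desired estimate $f(x^{*})\geq -\epsilon$ is equivalent to
\[
 \delta^{2/\gamma} \leq \frac{\epsilon\, a^{(2-\gamma)/\gamma}}{c^{(2-\gamma)/\gamma}-c^{2/\gamma}},
\]
that is,
\[
 \delta \leq a^{(2-\gamma)/2}\left(\frac{\epsilon}{\left(\frac{2-\gamma}{2}\right)^{(2-\gamma)/\gamma}-\left(\frac{2-\gamma}{2}\right)^{2/\gamma}}\right)^{\gamma/2},
\]
which is exactly the second bound inside the minimum defining $\delta$; the first bound $\delta\leq a/2$ in the minimum plays no role in this estimate (it is presumably included for use in the application that invokes this lemma). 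There is no real obstacle here beyond bookkeeping of exponents; the only subtlety to flag is verifying that $(2-\gamma)/\gamma<2/\gamma$ together with $\frac{2-\gamma}{2}<1$ makes the denominator positive, so that taking $\gamma/2$-th roots preserves the inequality.
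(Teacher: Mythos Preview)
Your proof is correct and follows essentially the same approach as the paper: locate the unique interior critical point $x^*=\bigl((2-\gamma)\delta/(2a)\bigr)^{1/\gamma}$, verify via sign analysis of $f'$ that it is the global minimizer, compute $f(x^*)$, and check that the second branch of the $\min$ defining $\delta$ is exactly the condition $f(x^*)\geq-\epsilon$. Your factorization $f'(x)=x^{1-\gamma}\bigl(2ax^{\gamma}-(2-\gamma)\delta\bigr)$ streamlines the monotonicity check slightly compared to the paper's separate bounding of $f'$ on $[0,x^*]$ and $[x^*,\infty)$, and your observation that the bound $\delta\leq a/2$ is unused here is accurate.
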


\section{Global Existence and Uniqueness of the Nonlinear-Nudging System}\label{secExis}
\noindent
Before we can prove convergence of the solutions of \eqref{nonlinDA} to the reference solution $\bu$ of \eqref{NSEtrue}, we must demonstrate that solutions to \eqref{nonlinDA} exist globally in time; we employ fixed point methods.  The advantage of this approach is many-fold, harnessing the properties of solutions to the Navier-Stokes equation directly and using the monotonicity of the nonlinear-nudging term to best effect.  


\begin{remark}
    Note that in the theorems, in the following sections, we claim that we critically assume $0 < \mu c_0 h^2 < \nu$, and we remark there is a constant $1$ multiplying $\mu$ to maintain proper units.
\end{remark}

\begin{theorem}\label{thm_existence}
 Let $T> 0$. Suppose $I_h$ satisfies \eqref{Ih_always_assumptions}, $0 < \mu c_0h^2 < \nu$, and $0 < \beta c_0h^2 < \nu$.  Fix $0 < \gamma < 1$.  Let $\bv_0 \in V$ be initial data and $\bbf \in H$ be the time-independent forcing function for \eqref{nonlinDA_Leray}.  Let $\bu \in C([0,T];V) \cap L^2(0,T;\mathcal{D}(A))$ be a strong solution to system \eqref{NSEtrue_leray} with initial data $\bu_0 \in V$ and the same forcing function $\bbf$.  Then there exists a strong solution $\bv \in C([0,T];V) \cap L^2(0,T;\mathcal{D}(A))$ to system \eqref{nonlinDA_Leray}.   
\end{theorem}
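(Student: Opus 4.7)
The plan is to construct a strong solution via a Schauder fixed point argument. The key observation that makes this viable is that for $\gamma\in(0,1)$ the nonlinear operator $\bphi\mapsto\mathcal{N}(\bphi)$ is continuous (though not Lipschitz) as a map from $H$ to $H$: continuity at nonzero points is routine, and at the origin $\|\mathcal{N}(\bphi)\|_H=\|\bphi\|_H^{1-\gamma}\to 0$ since $1-\gamma>0$. In fact $\mathcal{N}$ is H\"older continuous of exponent $1-\gamma$ on $H$, hence uniformly continuous on bounded sets, which is enough for compactness-based fixed point arguments while precluding a Banach contraction.

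Define a map $\Phi$ on a closed, convex, bounded subset $K\subset C([0,T];H)$ by $\Phi(\bw)=\bv$, where $\bv$ is the strong solution of the linear-nudging system
\begin{align*}
 \bv_t+B(\bv,\bv)+\nu A\bv&=\bbf+\mu P_\sigma\mathcal{N}(I_h(\bu)-I_h(\bw))+\beta P_\sigma(I_h(\bu)-I_h(\bv)),\\
 \bv(0)&=\bv_0,
\end{align*}
obtained by freezing $\bw$ in the singular term while keeping the ordinary linear-nudging feedback implicit in $\bv$. Since $\|\mathcal{N}(I_h(\bu)-I_h(\bw))\|_H=\|I_h(\bu)-I_h(\bw)\|_H^{1-\gamma}$ is bounded in $L^\infty(0,T)$ by \eqref{interp_bound_2} whenever $\bw$ is bounded in $L^\infty(0,T;V)$, the augmented forcing lies in $L^2(0,T;H)$, and the Azouani--Olson--Titi existence theorem for the linear-nudging system (applicable since $\beta c_0h^2<\nu$) produces a unique strong solution $\bv\in C([0,T];V)\cap L^2(0,T;\mathcal D(A))$ with $\bv_t\in L^2(0,T;H)$.

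Next I would derive $\bw$-independent a priori estimates on $\bv$. Testing with $\bv$ in $H$ and with $A\bv$, using $(B(\bv,\bv),A\bv)=0$ from \eqref{wwAw0} to eliminate the convection from the higher-order estimate, and handling both nudging contributions by the standard AOT splitting $I_h(\bu)-I_h(\bv)=(\bu-\bv)-((I-I_h)\bu-(I-I_h)\bv)$ together with the interpolation inequality \eqref{Ih_interp_bound_1}, one absorbs the dangerous pieces into $\nu\|\bv\|_V^2$ and $\nu\|A\bv\|_H^2$ using the hypotheses $\mu c_0h^2<\nu$ and $\beta c_0h^2<\nu$, while Theorem \ref{ubounds} controls the contributions from the reference flow $\bu$. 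This yields bounds for $\bv$ in $L^\infty(0,T;V)\cap L^2(0,T;\mathcal D(A))$ and for $\bv_t$ in $L^2(0,T;H)$ depending only on known data and on the radius of $K$. Choosing $K$ large enough makes it invariant under $\Phi$, and Aubin--Lions compactness shows that $\Phi(K)$ is precompact in $C([0,T];H)$.

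For continuity of $\Phi$ in the $C([0,T];H)$ topology: if $\bw_n\to\bw$ in $C([0,T];H)$, H\"older continuity of $\mathcal{N}$ on the bounded image $\{I_h(\bu)-I_h(\bw_n)\}$ yields $\mathcal{N}(I_h(\bu)-I_h(\bw_n))\to\mathcal{N}(I_h(\bu)-I_h(\bw))$ in $L^2(0,T;H)$, and a standard Gronwall estimate on $\bv_n-\bv=\Phi(\bw_n)-\Phi(\bw)$ (using the linear dissipative structure of the linear-nudging system) closes to give $\bv_n\to\bv$ in $C([0,T];H)$. Schauder's fixed point theorem then provides $\bv^\ast\in K$ with $\Phi(\bv^\ast)=\bv^\ast$, which is the desired strong solution of \eqref{nonlinDA_Leray}. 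The main obstacle I expect is in the $\mathcal D(A)$ a priori estimate, where the term $\mu(\mathcal{N}(I_h(\bu)-I_h(\bw)),A\bv)$ must be split by the AOT interpolation trick so that only a multiple of $\|A\bv\|_H^2$ (absorbable via $\mu c_0h^2<\nu$) remains on the right, with the $\bw$-dependent remainder bounded by data using the sublinear bound $\|\mathcal{N}(\cdot)\|_H=\|\cdot\|_H^{1-\gamma}$, \eqref{interp_bound_2}, and Theorem \ref{ubounds}; all other steps are routine parabolic energy calculations.
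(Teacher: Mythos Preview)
Your approach via Schauder is the same strategy the paper uses, but with a different choice of what to freeze: the paper freezes \emph{both} nudging terms, setting $g(\bv)=\bbf+\mu P_\sigma\mathcal{N}(I_h(\bu-\bv))+\beta P_\sigma I_h(\bu-\bv)$ and solving the plain Navier--Stokes system $\tilde\bu_t+B(\tilde\bu,\tilde\bu)+\nu A\tilde\bu=g(\bv)$ with $F(\bv)=\tilde\bu$, working in $L^2(0,T;V)$; you freeze only the nonlinear piece and keep the linear nudging implicit by invoking the AOT existence theorem. Your variant has the conceptual advantage that the implicit linear term stays dissipative and the frozen nonlinear forcing is sublinear in $\bw$ (since $\|\mathcal{N}(\cdot)\|_H=\|\cdot\|_H^{1-\gamma}$), which should let you close the self-map on all of $[0,T]$ without the short-time bootstrapping the paper carries out; the paper itself remarks that Schaefer's theorem would also remove that step.

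There is, however, a genuine gap in your choice of fixed-point space. You take $K\subset C([0,T];H)$, but under the hypotheses of the theorem $I_h$ is only assumed to satisfy \eqref{Ih_always_assumptions}, which controls $I_h$ on $\dot H^1(\Omega)$, not on $L^2$; the $L^2$-boundedness \eqref{Ih_bounded} is \emph{not} assumed here. An arbitrary $\bw\in K$ need not lie in $V$ at each time, so $I_h(\bw(t))$ may not be defined, and even for $\bw_n\in V$, convergence $\bw_n\to\bw$ in $C([0,T];H)$ alone does not give $I_h\bw_n\to I_h\bw$. You tacitly acknowledge this when you write that the forcing is bounded ``whenever $\bw$ is bounded in $L^\infty(0,T;V)$'', but that is inconsistent with $K$ being merely a ball in $C([0,T];H)$. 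The fix is either to work in $L^2(0,T;V)$ as the paper does, or to build the $L^\infty(0,T;V)$ and $L^2(0,T;\mathcal D(A))$ bounds into the definition of $K$ (such a $K$ is still closed and convex in $C([0,T];H)$ by weak lower semicontinuity of norms) and then run the continuity argument through an Aubin--Lions subsequence in $L^2(0,T;V)$. One smaller point: your stated ``main obstacle'', absorbing $\mu(\mathcal{N}(I_h(\bu)-I_h(\bw)),A\bv)$ via $\mu c_0h^2<\nu$, is not an obstacle at all---that term has no $\bv$ in its first argument, so Cauchy--Schwarz and Young dispatch it directly; in your frozen system the structural condition that matters is $\beta c_0h^2<\nu$, for the implicit linear nudging.
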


\begin{proof}

First, we show that \[g(\bv) = \bbf + \mu P_\sigma\mathcal{N}(I_h(\bu-\bv)) + \beta P_\sigma I_h(\bu-\bv)\] maps elements $\bv \in L^2(0,T;V)$ to $L^2(0,T;H)$.  By assumption $\bbf \in H$ and hence trivially $\int_0^T |\bbf|^2 dx = T|\bbf|^2 < \infty$, so we only need to prove that the second and third terms reside in $L^2(0,T;H)$:

\begin{align}\label{NL_IH_bound}
&\quad
 \int_0^T \|P_\sigma\mathcal{N}(I_h(\bu-\bv))\|_H^2 dt 
 \\
 &\leq \int_0^T \|\mathcal{N}(I_h(\bu-\bv))\|_{L^2(\Omega)}^2 dt 
 = \int_0^T \|I_h(\bu-\bv)\|_{L^2(\Omega)}^{2(1-\gamma)} dt 
 \notag 
 \\
 &\leq (\lambda_1^{-1/2} + h\sqrt{c_0})^{2(1-\gamma)} \int_0^T \|\bu-\bv\|_V^{2(1-\gamma)} dt 
 \notag
 \\
 &\leq 2^{2(1-\gamma)}(\lambda_1^{-1/2} + h\sqrt{c_0})^{2(1-\gamma)} \int_0^T \lp \|\bu\|_V^{2(1-\gamma)}+\|\bv\|_V^{2(1-\gamma)} \rp dt
  \notag
  \\
 &\leq 2^{2(1-\gamma)}(\lambda_1^{-1/2} + h\sqrt{c_0})^{2(1-\gamma)} T^{\gamma}\lp\|\bu\|_{L^2(0,T;V)}^{2(1-\gamma)} + \|\bv\|_{L^2(0,T;V)}^{2(1-\gamma)}\rp
 \notag  
 \\
 &< \infty,\notag
\end{align}
where for the second inequality, we applied \eqref{interp_bound_2}.  The same analysis with $\gamma = 0$ shows that the third term is also in $L^2(0,T; H)$.  Thus, $g(\bv) \in L^2(0,T; H)$ for any $\bv \in L^2(0,T;V)$.
Next, we show $g: L^2(0,T;V)\to L^2(0,T;H)$ is continuous.  To this end, let $\bv_k\to \bv$ in $L^2(0,T;V)$.
For the sake of contradiction suppose there exists an $\epsilon_0 > 0$ and a subsequence $\{\bv_{k_j}\}$ such that $\|g(\bv_{k_j}) - g(\bv)\|_{L^2(0,T;H)} \geq \epsilon_0$.  Since $\|\bv_{k_j}-\bv\|_{L^2(0,T;V)} \to 0$, there exists a subsequence $\{\bv_{k_{j_l}}\}$  such that $\|\bv_{k_{j_l}}(t) - \bv(t)\|_V \to 0$ for almost every $t \in [0,T]$.
This implies that $\|I_h(\bu-\bv_{k_{j_l}}) - I_h(\bu-\bv)\|_H \to 0$ pointwise a.e. in time by \eqref{interp_bound_2}, 
 so that $\|g(\bv_{k_{j_l}}) - g(\bv)\|_H^2 \to 0$ pointwise a.e. in time.  Furthermore, there exists $\bw \in L^2(0,T;V)$ such that $\|I_h(\bu-\bv_{k_{j_l}})\|_H \leq \|\bw\|_{V}$ for all $k_{j_l} \in \mathbb{N}$ and for a.e. $t\in [0,T]$; then 
 \begin{align*}
 	&\quad
 	\int_0^T \|g(\bv_{k_{j_l}})- g(\bv)\|_H^2 dt 
 \\
    &\leq  
 	\int_0^T \Big(\mu\| I_h(\bu-\bv_{k_{j_l}})\|_H^{1-\gamma} + \mu \|I_h(\bu-\bv)\|_H^{1-\gamma} 
 \\
 	&\qquad \qquad + \beta \|I_h(\bv_{k_{j_l}} - \bu)\|_H + \beta \|I_h(\bv - \bu)\|_H \Big)^2 dt 
  \\
 	&\leq 
    \int_0^T \lp \mu \|\bw\|_V^{2(1-\gamma)} + \beta \|\bw\|_V^2 + \mu \|I_h(\bu-\bv)\|_H^{1-\gamma} + \beta \|I_h(\bu-\bv)\|_H \rp^2 
 	< \infty,
 \end{align*}
 where the finiteness follows directly from the bounds computed in \eqref{NL_IH_bound}.
 Hence, by the Lebesgue Dominated Convergence Theorem,  
  $g(\bv_{k_{j_l}}) \to g(\bv)$ strongly in $L^2(0,T;H)$.   This is a contradiction, and hence $g: L^2(0,T;V) \to L^2(0,T;H)$ is continuous.

Given $\bv \in L^2(0,T;V)$, consider the system 
\begin{align}\label{nonlinDA_alt}
 \widetilde{\bu}_t + B(\widetilde{\bu},\widetilde{\bu}) + \nu A\widetilde{\bu} &= g(\bv), \\
 \widetilde{\bu}(\bx,0) &= \bv_0(\bx).
\end{align}
Let $F(\bv) = \widetilde{\bu} \in C([0,T];V)\cap L^2(0,T;\mathcal{D}(A))$ be the unique strong solution to \eqref{nonlinDA_alt}, guaranteed by Theorem \ref{strong_soln_NSE} since $g(\bv) \in L^2(0,T;H)$ (hence $F$ is well-defined).  

To show that $F$ is continuous, take a sequence $\{\bv_k\} \subset L^2(0,T;V)$ with $\bv_k  \to \bv$ in $L^2(0,T;V)$, 
with the associated sequence of solutions $F(\bv_k) = \tbu_k$ to \eqref{nonlinDA_alt}. Let $M > 0$ be such that $\|\bv_k\|_{L^2(0,T;V)} \leq M$ for all $k \in \mathbb{N}$.   
Take the $H$ inner-product of \eqref{nonlinDA_alt} with $A\tbu_k$ and use standard energy-estimate techniques to obtain
\begin{align*}
\frac{1}{2} \frac{d}{dt} \|\tbu_k\|_V^2 + \nu \|A\tbu_k\|_{H}^2 &\leq \frac{1}{2\nu}\|g(\bv_k)\|_H^2 + \frac{\nu}{2} \|A\tbu_k\|_{H}^2 
\end{align*}
hence
\begin{align*}
\frac{d}{dt} \|\tbu_k\|_V^2 + \nu \|A\tbu_k\|_H^2 &\leq \frac{1}{\nu}\|g(\bv_k)\|_H^2,
\end{align*}
which implies that, since the initial data is independent of $k$, 
\begin{align}\label{estimate1}
&\quad
 \|\tbu_k(t)\|_V^2 + \nu \int_0^t \|A\tbu_k\|_H^2 dt 
 \leq \|\tbu_k(0)\|_V^2 + \frac{1}{\nu}\int_0^t \|g(\bv_k)\|_{H}^2 dt  
\\
 &\leq \|\bv_0\|_V^2 + \frac{2}{\nu} T\|f\|_H^2 + \frac{C_\gamma(\mu,h)}{\nu}T^{\gamma}(\|\bu\|_{L^2(0,T;V)}^{2(1-\gamma)} + \|\bv_k\|_{L^2(0,T;V)}^{2(1-\gamma)}) 
 \nonumber
\\
&\qquad \qquad
 + \frac{C(\beta,h)}{\nu}(\|\bu\|_{L^2(0,T;V)}^{2} + \|\bv_k\|_{L^2(0,T;V)}^{2})
 \nonumber
\\
 &\leq 
 \|\bv_0\|_V^2 + \frac{2}{\nu} T\|f\|_H^2 + \frac{C_\gamma(h,\mu)}{\nu}T^{\gamma}(\|\bu\|_{L^2(0,T;V)}^{2(1-\gamma)}  + M^{2(1-\gamma)})\nonumber
 \\
 &\qquad \qquad 
 + \frac{C(\beta,h)}{\nu}(\|\bu\|_{L^2(0,T;V)}^{2} + M^{2}),\nonumber
\end{align}
where $C_\gamma(h,\mu) = 2^{2-\gamma}\mu^2\lp\sqrt{c_0}h+\lambda_1^{-\frac12}\rp^{2(1-\gamma)}$ and $C(h,\beta) = 4\beta^2 \lp\sqrt{c_0}h+\lambda_1^{-\frac12}\rp^2$.

By \eqref{estimate1}, we know that $\{\tilde{\bu}_k\}$ is uniformly bounded in $C([0,T];V) \cap L^2(0,T;\mathcal{D}(A))$.  We show that $\{\tilde{\bu}_k\}$ is a Cauchy sequence.  Let $m \geq k$ and $\bw_{m,k} = \tilde{\bu}_m - \tilde{\bu}_k$; then $\bw_{m,k}$ satisfies
\begin{align}
    \frac{d}{dt}\bw_{m,k} + \nu A \bw_{m,k} + B(\tilde{\bu}_k, \bw_{m,k}) + B(\bw_{m,k}, \tilde{\bu}_k) + B(\bw_{m,k}, \bw_{m,k}) = g(\bv_k) - g(\bv_m).
\end{align}
Taking the inner product with $A\bw_{m,k}$ in $H$ and applying \eqref{wwAw0}, \eqref{jacobi},
\begin{align}
    \frac12 \frac{d}{dt} \|\bw_{m,k}\|_V^2 + \nu\|A\bw_{m,k}\|_H^2 = (B(\bw_{m,k},\bw_{m,k}), A\tilde{\bu}_k) + (g(\bv_k) - g(\bv_m), A \bw_{m,k}).
\end{align}
Thus, employing Poincar{\'e}'s and Young's inequalities,
\begin{align}
    \frac12 \frac{d}{dt} \|\bw_{m,k}\|_V^2 + \nu\|A\bw_{m,k}\|_H^2 
    &\leq 
    c \|\bw_{m,k}\|_H^\frac12\|A\bw_{m,k}\|_H^\frac12\|\bw_{m,k}\|_V\|A\tilde{\bu}_k\|_H
    \\\notag
    &\qquad + \|g(\bv_k) - g(\bv_m)\|_H\|A \bw_{m,k}\|_H
    \\\notag
    &\leq 
    c^{\frac43}\lambda_1^{-\frac13}\nu^{-3}\|A\tilde{\bu}_k\|_H^\frac43\|\bw_{m,k}\|_V^2 
    \\\notag
    &\qquad + \frac{\nu}2\|A\bw_{m,k}\|_H^2 + \frac{1}{\nu}\|g(\bv_k) - g(\bv_m)\|_H^2,
\end{align}
which yields
\begin{align}\label{Fcontinuity_energyBds}
    \frac{d}{dt}\|\bw_{m,k}\|_V^2 + \nu\|A\bw_{m,k}\|_H^2 
    \leq 
    2c^{\frac43}\lambda_1^{-\frac13}\nu^{-3}\|A\tilde{\bu}_k\|_H^\frac43\|\bw_{m,k}\|_V^2 
    + \frac{2}{\nu}\|g(\bv_k) - g(\bv_m)\|_H^2
\end{align}
which implies, with $\tilde{c} = 2c^{\frac43}\lambda_1^{-\frac13}\nu^{-3}$
\begin{align}
    \frac{d}{dt}\|\bw_{m,k}\|_V^2 
    \leq 
    \tilde{c} \|A\tilde{\bu}_K\|_H^\frac43\|\bw_{m,k}\|_V^2 + \frac{2}{\nu}\|g(\bv_k) - g(\bv_m)\|_H^2
\end{align}
and by Gr{\"o}nwall's inequality, (since $\bw_{k,m}(0) = 0$),
\begin{align}
    \|\bw_{k,m}(t)\|_V^2 
    &\leq
    \int_0^t \text{exp}
    \lp  \tilde{c}\int_s^t\|A\tilde{\bu}_k(\tau)\|_H^\frac43 d\tau
    \rp
    \frac{2}{\nu}\|g(\bv_k(s)) - g(\bv_m(s))\|_H^2 ds
    \\
    &\leq 
    \text{exp}
    \lp  \tilde{c}\int_0^T\|A\tilde{\bu}_k(\tau)\|_H^\frac43 d\tau
    \rp
    \int_0^T
    \frac{2}{\nu}\|g(\bv_k(s)) - g(\bv_m(s))\|_H^2 ds
\end{align}
Thus, taking the supremum and applying H{\"o}lder's inequality,
\begin{align}
\sup\limits_{0\leq t\leq T} \|\bw_{k,m}(t)\|_V^2
& \leq
\text{exp}\lp\tilde{c}\lp \int_0^T \|A\tilde{\bu}_k(\tau)\|_H^2 d\tau\rp^\frac23 T^\frac13\rp\|g(\bv_k(s)) - g(\bv_m(s))\|_{L^2(0,T;H)}^2.
\end{align}
By the uniform bound on $\|A\tilde{\bu}_k(\tau)\|_{L^2(0,T;H)}^2$ and since $g$ is continuous, we have the $\{\tilde{\bu}_k\}$ is Cauchy in the $C([0,T];V)$ norm hence it converges to some $\tbu\in C([0,T];V)$.
Next, we directly integrate \eqref{Fcontinuity_energyBds}, using $\bw_{k,m}(0) = 0$, to obtain
\begin{align}
\nu\int_0^T\|A\bw_{m,k}\|_H^2 ds
    \leq
    \tilde{c} &\lp\int_0^T\|A\tilde{\bu}_k(s)\|_H^2 ds\rp^\frac23 T^\frac13\lp\sup\limits_{0\leq t\leq T}\|\bw_{m,k}\|_V^2 \rp
    \\\notag
    &+ \frac{2}{\nu}\|g(\bv_k) - g(\bv_m)\|_{L^2(0,T;H)}^2,
\end{align}
which implies that $\{A\tilde{\bu}_k\}$ is Cauchy in $L^2(0,T;H)$ and converges to $A\tilde{\bu}$ in this norm.
Next, we show that $\frac{d\tilde{\bu}_k}{dt}\to \frac{d\tilde{\bu}}{dt}$ in $L^2(0,T;H)$.  First we observe that, via \eqref{BIN3}, 
\begin{align}
\notag
    &\|B(\tilde{\bu}_k, \tilde{\bu}_k) - B(\tilde{\bu}, \tilde{\bu})\|_H
    + \|B(\tilde{\bu}_k, \tilde{\bu})\|_H 
    + \|B(\tilde{\bu}_k, \tilde{\bu}_k - \tilde{\bu})\|_H 
    \\
    \leq&
    c \|\tilde{\bu}_k-\tilde{\bu}\|_H^\frac12\|A\tilde{\bu}_k-A\tilde{\bu}\|_H^\frac12\|\tilde{\bu}\|_V 
    + c \|\tilde{\bu}_k\|_H^\frac12\|A\tilde{\bu}_k\|_H^\frac12\|\tilde{\bu}_k-\tilde{\bu}\|_V.
\end{align}
And thus,
 \begin{align*}
  \int_0^T \|B(\tbu_k-\tbu,\tbu_k-\tbu)\|_H^2 dt 
  &\leq 
  c \frac{1}{\lambda_1}\|\tbu\|_{L^\infty(0,T;V)}^2\|A\tbu_k-A\tbu\|_{L^2(0,T;H)}^2 
  \\
  &\qquad + c \|\tbu_k-\tbu\|_{L^\infty(0,T;V)}^2\|\tbu_k\|_{L^2(0,T;H)}\|A\tbu_k\|_{L^2(0,T;H)},
 \end{align*}
 which from the fact that $\tbu_k\to \tbu$ strongly in $C([0,T];V)\cap L^2(0,T;\mathcal{D}(A))$ implies
$B(\tilde{\bu}_k,\tilde{\bu}_k)$ converges to $B(\tilde{\bu},\tilde{\bu})$ in $L^2(0,T;H)$.  Since 
\begin{align}\label{derivativeKbound}
    \frac{d\tilde{\bu}_k}{dt} 
    &= -\nu A\tilde{\bu}_k - B(\tilde{\bu}_k, \tilde{\bu}_k) + g(\bv_k),
\end{align}
the right-hand side converges in $L^2(0,T;H)$ thus $\frac{d\tilde{\bu}_k}{dt}$ converges to $\frac{d\tilde{\bu}}{dt}$ in $L^2(0,T;H)$.  Thus, $F(\bv_k) \to F(\bv)$ in $C([0,T];V)\cap L^2(0,T;\mathcal{D}(A))$.


Next we show that 
$F$ is a compact operator on $L^2(0,T;V)$. For any bounded sequence $\{\bv_k\} \subset L^2(0,T;V)$ the estimate \eqref{estimate1} holds, implying that $\{\tbu_k\}$ is uniformly bounded, in particular, in $L^2(0,T;\mathcal{D}(A))$, and the same arguments can be followed  \eqref{derivativeKbound} show uniform boundedness instead of convergence, i.e. $\{\frac{d\tbu_k}{dt}\}$ is uniformly bounded in $L^2(0,T;H)$.  Thus, by Aubin's Compactness Theorem, there exists a subsequence $\{\bv_{k_j}\}$ such that $F(\bv_{k_j}) = \tbu_{k_j}$ converges strongly in $L^2(0,T;V)$. Thus, $F$ is a (nonlinear) continuous compact operator.

We implement a version of the Schauder Fixed Point Theorem which states that for a closed, bounded, convex set B in a Banach space $X$, if $F: X \to X$ is a compact operator such that $F:B \to B$, then $F$ has a fixed point in $B$, (see, e.g., \cite{Conway_1990}). For given initial data $\bv_0\in V$, fix $R > \frac{\|\bv_0\|_V^2}{\nu\lambda_1}$. Set\footnote{It is straightforward, though slightly laborious, to check that the expression for $T_*$ is dimensionally correct.} 
\begin{align*}
    T_* := \frac{R}
    {
    \nu\lambda_1 R
    + \frac{1}{\nu^2\lambda_1}\|\bbf\|_H^2 
    + \frac{C_\gamma(h,\mu)}{\nu^{1+\gamma}\lambda_1^\gamma}
    \lp\|\bu\|_{L^2(0,T;V)}^{2(1-\gamma)} + R^{(1-\gamma)}\rp
     + \frac{C(h,\beta)}{\nu}
     \lp\|\bu\|_{L^2(0,T;V)}^2 + R\rp}.
\end{align*}
and
\begin{align*}
    B := \left\{\bv \in L^2(0,T;V) :  \int_0^{T_*} \|\bv\|_V^2 \leq R\right\},
\end{align*}
Notice that $T_* < \frac{1}{\nu\lambda_1}$.
Given any $\bv \in B$, we note that by definition $F(\bv) = \tbu \in C([0,T_*];V) \cap L^2(0,T_*;\mathcal{D}(A))$ and thus $\tbu \in L^2(0,T_*;V)$.
Moreover, using an identical estimate to the first inequality in \eqref{estimate1}, except that we integrate over $[0,T_*]$ and use $g(\bv)$ instead of $g(\bv_k)$, we 
obtain
\begin{align*}
    \int_0^{T_*} \|\tbu\|_V^2 dt
    &\leq
    T_*\|\bv_0\|_V^2 
    + \frac{1}{\nu} T_*^2\|\bbf\|_H^2
\\
    &\qquad \qquad 
     + T_*^{1+\gamma}\frac{C_\gamma(h,\mu)}{\nu}\lp\lp\int_0^{T_*} \|\bu\|_V^2\rp^{1-\gamma}  + \lp\int_0^{T_*} \|\bv\|_V^2\rp^{1-\gamma} \rp
 \\
    &\qquad \qquad 
    + T_*\frac{C(\beta,h)}{\nu}\lp\int_0^{T_*} \|\bu\|_V^2 + \int_0^{T_*} \|\bv\|_V^2\rp 
 \\
    &\leq 
    T_* \Big( \nu\lambda_1 R + \frac{1}{\nu} T_* \|\bbf\|_H^2
    + \frac{C_\gamma(h,\mu)}{\nu}T_*^\gamma \lp \|\bu\|_{L^2(0,T;V)}^{2(1-\gamma)} + R^{1-\gamma}\rp
\\
    &\qquad \qquad
    + \frac{C(\beta,h)}{\nu}\lp\|\bu\|_{L^2(0,T;V)}^{2} + R\rp\Big)
\\
    &\leq 
    T_* \Big( \nu\lambda_1 R + \frac{1}{\nu^2\lambda_1} \|\bbf\|_H^2
    + \frac{C_\gamma(h,\mu)}{\nu^{1+\gamma}\lambda_1^\gamma} \lp \|\bu\|_{L^2(0,T;V)}^{2(1-\gamma)} + R^{1-\gamma}\rp
\\
    &\qquad \qquad
    + \frac{C(\beta,h)}{\nu}\lp\|\bu\|_{L^2(0,T;V)}^{2} + R\rp \Big)\leq R
\end{align*}
by the definition of $T_*$.  
In other words, 
$\widetilde{\bu} \in B$, and hence, $F:B\to B$.  Since $F$ is compact on $L^2(0,T;V)$, there exists a fixed point of $F$ in $B$, i.e., $F(\bv) = \bv$ on $[0,T_*]$.  Call this fixed point $\bv_1 \in C([0,T];V) \cap L^2(0,T;\mathcal{D}(A))$.  Consider \eqref{nonlinDA_alt} with initial data $\bv_1(T_*/2)$ (which is allowed, because solutions must be continuous in time).  Now choose $\tilde{R} > \max\left\{R,  \frac{\|\bv_1(T_*/2)\|_V^2}{\nu\lambda_1}\right\}$ and 
\begin{align}
    \tilde{B}:= \left\{\bv \in L^2(0,T;V)  : \int_{T_*/2}^{\tilde{T}_*} \|\bv\|_V^2 \leq \tilde{R}\right\}
\end{align}
and
\begin{align*}
    \tilde{T}&_* := \frac{T_*}{2} + \Delta T,
\end{align*}
where 
\begin{align*}
    \Delta T = \frac{\tilde{R}}
    {
    \nu\lambda_1 \tilde{R}
    + \frac{1}{\nu^2\lambda_1}\|\bbf\|_H^2 
    + \frac{C_\gamma(h,\mu)}{\nu^{1+\gamma}\lambda_1^\gamma}
    \lp\|\bu\|_{L^2(0,T;V)}^{2(1-\gamma)} + \tilde{R}^{(1-\gamma)}\rp
     + \frac{C(h,\beta)}{\nu}
     \lp\|\bu\|_{L^2(0,T;V)}^2 + \tilde{R}\rp}.
\end{align*}
Notice that $T_*/2 < T_* < \tilde{T}_*$, and notice moreover that the length $\Delta T$ of the interval is slightly larger than that of the previous interval because the function $\frac{x}{a+bx^{1-\gamma}+cx}$ is monotonically increasing in $x$ for $a,b,c,x>0$.  Integrating from $T_*/2$ to $\tilde{T}_*$ yields that $\|\tbu\|_{L^2(T_*/2,\tilde{T}_*;V)}^2 \leq \tilde{R}$ implying that $\tbu \in B$.  Hence, $F$ has a fixed point $\bv_2 \in \tilde{B}$ such that $F(\bv_2) = \bv_2$ on $[T_*/2,\tilde{T}_*]$, and moreover $\bv_2(T_*/2) = \bv_1(T_*/2)$.  By induction on $\tilde{T}_*$, $F$ has a fixed point $\bv \in C([0,T];V) \cap L^2(0,T; \mathcal{D}(A))$.
\end{proof}

\begin{remark}
Alternatively one can use Schaefer's Fixed Point Theorem in which one does not have to bootstrap in time as in the above proof.
\end{remark}

\begin{theorem}\label{uniqueness}
 Suppose $I_h$ 
 satisfies \eqref{Ih_always_assumptions}, \eqref{Ih_projection}, and \eqref{Ih_symmetric}.  Then system \eqref{nonlinDA_Leray} with initial data $\bv_0 \in V$ possesses a unique strong solution.
\end{theorem}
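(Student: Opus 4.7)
The plan is to run the standard energy estimate on the difference of two strong solutions, using the structure of the nonlinear nudging term to show it contributes nonpositively. Let $\bv_1,\bv_2\in C([0,T];V)\cap L^2(0,T;\mathcal{D}(A))$ be two strong solutions of \eqref{nonlinDA_Leray} with $\bv_1(0)=\bv_2(0)=\bv_0$. Set $\bw:=\bv_1-\bv_2$ and $\bphi_j:=I_h(\bu-\bv_j)$ for $j=1,2$. Subtracting the two Leray-projected equations and using bilinearity of $B$,
\[
 \bw_t+B(\bw,\bv_1)+B(\bv_2,\bw)+\nu A\bw = \mu P_\sigma\bigl[\mathcal{N}(\bphi_1)-\mathcal{N}(\bphi_2)\bigr]-\beta P_\sigma I_h\bw.
\]
I would pair this with $\bw$ in $H$, use $(B(\bv_2,\bw),\bw)=0$, and arrive at
\[
 \tfrac12\tfrac{d}{dt}\|\bw\|_H^2+\nu\|\bw\|_V^2 = -\bigl(B(\bw,\bv_1),\bw\bigr)+\mu\bigl(\mathcal{N}(\bphi_1)-\mathcal{N}(\bphi_2),\bw\bigr)-\beta(I_h\bw,\bw).
\]

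The main (and really the only nontrivial) step is to handle the singular nonlinear nudging term; this is where \eqref{Ih_projection} and \eqref{Ih_symmetric} are used. Because each $\bphi_j$ lies in the range of $I_h$ and $\mathcal{N}(\bphi_j)$ is a scalar multiple of $\bphi_j$, idempotency \eqref{Ih_projection} gives $I_h\mathcal{N}(\bphi_j)=\mathcal{N}(\bphi_j)$. Then symmetry \eqref{Ih_symmetric} yields $(\mathcal{N}(\bphi_j),\bw)=(\mathcal{N}(\bphi_j),I_h\bw)$, and a direct computation gives $I_h\bw=-(\bphi_1-\bphi_2)$, so
\[
 \bigl(\mathcal{N}(\bphi_1)-\mathcal{N}(\bphi_2),\bw\bigr) = -\bigl(\mathcal{N}(\bphi_1)-\mathcal{N}(\bphi_2),\bphi_1-\bphi_2\bigr).
\]
A short calculation using Cauchy–Schwarz shows that for any $\bphi_1,\bphi_2\in L^2$,
\[
 \bigl(\mathcal{N}(\bphi_1)-\mathcal{N}(\bphi_2),\bphi_1-\bphi_2\bigr)\;\geq\;\bigl(\|\bphi_1\|_H^{1-\gamma}-\|\bphi_2\|_H^{1-\gamma}\bigr)\bigl(\|\bphi_1\|_H-\|\bphi_2\|_H\bigr)\;\geq\;0,
\]
since $x\mapsto x^{1-\gamma}$ is nondecreasing on $[0,\infty)$ for $\gamma\in[0,1)$. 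Thus $\mu(\mathcal{N}(\bphi_1)-\mathcal{N}(\bphi_2),\bw)\leq0$. Similarly, the same projection–symmetry argument gives $(I_h\bw,\bw)=\|I_h\bw\|_H^2\geq0$, so the $\beta$-term is also nonpositive. (Note this also recovers \eqref{Ih_positive} from the stated hypotheses, so no extra assumption is needed.)

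Having discarded both nudging terms, I am left with the standard 2D Navier--Stokes uniqueness estimate,
\[
 \tfrac12\tfrac{d}{dt}\|\bw\|_H^2+\nu\|\bw\|_V^2\;\leq\;\bigl|(B(\bw,\bv_1),\bw)\bigr|,
\]
which, by \eqref{BIN} and Young's inequality, gives
\[
 \tfrac{d}{dt}\|\bw\|_H^2 \;\leq\; \tfrac{c^2}{\nu}\|\bv_1\|_V^2\,\|\bw\|_H^2.
\]
Since $\bv_1\in L^2(0,T;V)$, Gr\"onwall's inequality together with $\bw(0)=0$ forces $\bw\equiv0$ on $[0,T]$, establishing uniqueness. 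The only real obstacle is the singularity of $\mathcal{N}$ at $0$ (which prevents a Lipschitz-type contraction argument when $\gamma>0$); this is bypassed by the monotonicity observation above, which is precisely what makes the choice of $\mathcal{N}$ tractable.
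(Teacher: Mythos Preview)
Your proof is correct and follows essentially the same approach as the paper's: both arguments hinge on the monotonicity of $\mathcal{N}$ on $L^2$ together with the projection and symmetry properties \eqref{Ih_projection}--\eqref{Ih_symmetric} to recast $(\mathcal{N}(\bphi_1)-\mathcal{N}(\bphi_2),\bw)$ as $-(\mathcal{N}(\bphi_1)-\mathcal{N}(\bphi_2),\bphi_1-\bphi_2)\leq 0$, after which the remainder is the standard 2D Navier--Stokes uniqueness estimate via \eqref{BIN} and Gr\"onwall. Your use of $I_h\mathcal{N}(\bphi_j)=\mathcal{N}(\bphi_j)$ (range-invariance from idempotency) is a clean way to phrase the same manipulation the paper performs by pulling out the scalar and applying $I_h$ inside; the monotonicity computation and the handling of the $\beta$-term are identical.
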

\begin{proof}
 Suppose $\bv_1$ and $\bv_2$ are two strong solutions to \eqref{nonlinDA} with the same initial condition.  Let $\bw_1 = \bu-\bv_1$, $\bw_2 = \bu-\bv_2$, and $\bV := \bv_1-\bv_2 = \bw_2-\bw_1$. Then $\bV$ solves the system
 \begin{align*}
  \frac{d\bV}{dt} + \nu A \bV + B(\bV,\bv_1) + B(\bv_2,\bV) &= \mu P_\sigma \left(\mathcal{N}(I_h(\bw_1)) - \mathcal{N}(I_h(\bw_2))\right) \\
  &\quad - \beta I_h(\bV)
  \\
  \nabla \cdot \bV &= 0 
  \\
  \bV(\bx,0) &= 0.
 \end{align*}
We prove that $\mathcal{N}$ is a monotone operator on $L^2(\Omega)$: given $\bu_1, \bu_2 \in L^2(\Omega)$, with $\bu_1 \neq \bu_2$ and non-zero (the proof is similar if for instance $\bu_2=\mathbf{0}$),
 \begin{align*}
 &\quad
  (\|\bu_1\|_H^{-\gamma} \bu_1 - \|\bu_2\|_H^{-\gamma} \bu_2, \bu_1-\bu_2) 
  \\
  &= \|\bu_1\|_H^{2-\gamma} + \|\bu_2\|_H^{2-\gamma} - (\|\bu_1\|_H^{-\gamma}+\|\bu_2\|_H^{-\gamma})(\bu_1,\bu_2) \\
  &\geq \|\bu_1\|_H^{2-\gamma} + \|\bu_2\|_H^{2-\gamma} - (\|\bu_1\|_H^{-\gamma}+\|\bu_2\|_H^{-\gamma})\|\bu_1\|_H\|\bu_2\|_H \\
  &= \|\bu_1\|_H^{2-\gamma} - \|\bu_1\|_H^{1-\gamma}\|\bu_2\|_H + \|\bu_2\|_H^{2-\gamma} - \|\bu_2\|_H^{1-\gamma}\|\bu_1\|_H \\
  &= (\|\bu_1\|_H^{1-\gamma} - \|\bu_2\|_H^{1-\gamma})(\|\bu_1\|_H-\|\bu_2\|_H) \\
  &\geq 0.
 \end{align*}
 
 Since $I_h$ is linear and satisfies \eqref{Ih_symmetric}, we take the inner-product with $\bV$ 
 \begin{align}
 &\quad
  \frac{1}{2}\frac{d}{dt} \|\bV\|_H^2 + \nu \|\bV\|_V^2 
  \\ \notag
  &= -(B(\bV,\bv_1),\bV) + \mu (P_\sigma(\mathcal{N}(I_h(\bw_1)) - \mathcal{N}(I_h(\bw_2))),\bV) - \beta(I_h(\bV),\bV)
  \\ \notag
    &=-(B(\bV,\bv_1),\bV) - \mu \lp \frac{I_h(\bw_1)}{\|I_h(\bw_1)\|_{L^2(\Omega)}} - \frac{I_h(\bw_2)}{\|I_h(\bw_2)\|_{L^2(\Omega)}},\bw_1 - \bw_2\rp - \beta\|I_h(\bV)\|^2
\\ \notag
    &\leq -(B(\bV,\bv_1),\bV) - \mu \lp I_h\lp \frac{\bw_1}{\|I_h(\bw_1)\|_{L^2(\Omega)}} - \frac{\bw_2}{\|I_h(\bw_2)\|_{L^2(\Omega)}}\rp,\bw_1 - \bw_2\rp 
\\ \notag
    &=-(B(\bV,\bv_1),\bV) - \mu \lp I_h\lp \frac{\bw_1}{\|I_h(\bw_1)\|_{L^2(\Omega)}} - \frac{\bw_2}{\|I_h(\bw_2)\|_{L^2(\Omega)}}\rp,I_h(\bw_1 - \bw_2)\rp 
\\ \notag
  &=-(B(\bV,\bv_1),\bV) - \mu \left(\mathcal{N}(I_h\bw_1) - \mathcal{N}(I_h\bw_2), I_h(\bw_1) - I_h(\bw_2) \right) 
\\ \notag
\textnormal{where} & \textnormal{ monotonicity now implies we can drop the middle term to obtain}
  \\ \notag
  &\leq -(B(\bV,\bv_1),\bV) 
  \\ \notag
  &\leq c \|\bv_1\|_V\|\bV\|_H\|\bV\|_V 
  \\ \notag
  &\leq \frac{c^2}{2\nu}\|\bv_1\|_V^2\|\bV\|_H^2 + \frac{\nu}{2} \|\bV\|_V^2. 
\end{align}
Hence,
\begin{align*}
 \frac{d}{dt} \|\bV\|_H^2 + \nu \|\bV\|_V^2 &\leq \frac{c^2}{\nu}\|\bv_1\|_V^2\|\bV\|_H^2.
\end{align*}
Integrating in time, we obtain
\begin{align*}
 \|\bV(t)\|_H^2 \leq \|\bV(0)\|_H^2e^{\frac{c^2}{\nu}\int_0^t\|\bv_1\|_V^2 ds} = 0.
\end{align*}
Thus, $\bv_1 = \bv_2$, and strong solutions to \eqref{nonlinDA} are unique.
\end{proof}

\begin{remark}
Notice that if $\beta = 0$, the proof for existence/uniqueness holds for the full range of values of $\|I_h(\bu-\bv)\|_H$.
\end{remark}

\section{Convergence}\label{secConv}

\noindent
In this section, we prove that solutions to \eqref{nonlinDA_Leray} converge to the solution of \eqref{NSEtrue_leray} at least exponentially.  Given a prescribed error $\epsilon >0$, $\bv$ a strong solution to \eqref{nonlinDA_Leray} and $\bu$ a strong solution to \eqref{NSEtrue_leray}, we prove that if $\|\bu-\bv\|$ is not less than epsilon before the exponential convergence of the solutions begins, then there is a small interval in time in which $\|\bv-\bu\|$ converges in finite time at least at a double-exponential rate and in finite time in both the $H$ and $V$ norms up to the chosen small error $\epsilon$. To demonstrate the double-exponential convergence, we use the simple fact that for $y \in (0,1]$, $1-y^{-\gamma} \leq \log(y^\gamma)$.  

For the convergence in $H$, we make the assumption that $I_h$ satisfies \eqref{Ih_always_assumptions},
\eqref{Ih_positive}, and \eqref{Ih_bounded}.
For instance, interpolants given by projection onto low Fourier modes and local averaging over finite volume elements satisfy these conditions.  The proof for the convergence in the $V$ norm holds for the case for interpolants that satisfy \eqref{Ih_always_assumptions} with the additional assumption of \eqref{Ih_A_positive}, which holds, e.g., in the case where $I_h$ is a projection onto low Fourier modes.
Hence, the convergence theorems below
will consider the \eqref{nonlinDA_Leray} initialized with data based on evolving \eqref{linDA_Leray} past a specific, sufficient large time (depending only on known system parameters and observable data).

We now introduce a less restrictive assumption than \eqref{Ih_symmetric} namely, \eqref{Ih_positive}, which will be employed to show the convergence of all strong solutions of \eqref{nonlinDA_Leray} to the corresponding unique reference solution of \eqref{NSEtrue_leray}. Specifically, since we no longer assume that \eqref{Ih_symmetric} holds, we do not necessarily have a unique strong solution to \eqref{nonlinDA_Leray}, but we have global existence by Theorem \ref{thm_existence}. Therefore we will show in Theorem \ref{first_conv_theorem} below that all the strong solutions to \eqref{nonlinDA_Leray} under assumption \eqref{Ih_positive}, regardless of their uniqueness,  converge to the unique strong reference solution of \eqref{NSEtrue_leray}.

\begin{theorem}\label{first_conv_theorem}
 Fix $0< \gamma < 1$.  Let $I_h$ be an interpolant satisfying  \eqref{Ih_always_assumptions}, \eqref{Ih_positive}, and \eqref{Ih_bounded}.  Let $\bv \in C([0,T];V) \cap L^2(0,T;\mathcal{D}(A))$ be a strong solution to \eqref{nonlinDA_Leray} with initial data $\bv_0 \in V$ and time-independent forcing $\bbf \in H$ and $\bu \in C([0,T];V) \cap L^2(0,T;\mathcal{D}(A))$ the strong solution to \eqref{NSEtrue_leray} with initial data $\bu_0 \in V$ and the same forcing $\bbf$. Fix $0 < \epsilon \ll \min\{1,\|\bu(0)-\bv(0)\|_H\}$.  Let $\mu, \beta$ be chosen so that
 \begin{align}\label{first_mu_condition}
  \mu > \max\left\{5c^2\lambda_1G^2\nu, \alpha^\gamma c^2\lambda_1G^2\nu,\frac{\alpha^\gamma}{\gamma}\right\}, \; \beta > c^2\lambda_1G^2\nu
 \end{align}
where $c$ is the specified constant in \eqref{BIN}.  
Let $\epsilon>0$ be given so that
 \begin{itemize}
   \item $\mu c_0 h^2 < \nu/2$,
   \item $\beta c_0 h^2 < \nu$, and
   \item $h \leq \frac{a \alpha^{\gamma}(\epsilon/2)^{\gamma/2}(\nu)^{1-\gamma/2}}{\mu\sqrt{c_0}}$,
 \end{itemize}
where\footnote{It is straight-forward to show that $\gamma\in(0,1)$ implies $a>0$.  A slightly more involved calculation shows that $a\leq \frac12\lambda_1^{(1-\gamma)/2}$.} 
\begin{align*}
a &:= \lp\lp \frac{2-\gamma}{2}\rp^{(2-\gamma)/\gamma} - \lp \frac{2-\gamma}{2}\rp^{2/\gamma}\rp^{\gamma/2}\lambda_1^{(1-\gamma)/2}2^{\gamma/2-1} 
\\
&= (2-\gamma)^{1-\frac{\gamma}{2}}
    \gamma^{\gamma/2}\lambda_1^{(1-\gamma)/2}2^{\frac{\gamma}{2}-2}.
\end{align*}
    Then for all $t \geq \tilde{t}$ (where $\tilde{t}$ is as prescribed in Theorem \ref{ubounds}), $\|\bv-\bu\|_H^2 \to 0$ at least exponentially as in \cite{Azouani_Olson_Titi_2014}.  If $\|\bv(\tilde{t}) - \bu(\tilde{t})\|_H > \epsilon$, then there is a time interval $[t_0, t^*]$ such that $\|\bv-\bu\|_H^2 \to \epsilon$ at a double-exponential rate.  In particular,
\begin{align*}
 \|\bv(t)-\bu(t)\|_H^2 \leq A \exp\pnt{-b\exp\pnt{\mu\alpha^{-\gamma}\gamma(t-t_0)}},
\end{align*}
for all $t < t^*$, where $A:= \exp\pnt{-\frac{1}{\mu\alpha^{-\gamma}\gamma}}$ and $b :=\frac{1}{\mu\alpha^{-\gamma}\gamma} (-(\mu\alpha^{-\gamma}\gamma-1)\log{\|\bw(t_0)\|_H^2}) > 0$.  

\end{theorem}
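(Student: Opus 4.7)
The strategy is an $H$-norm energy estimate on $\bw := \bu - \bv$, which I will convert into a differential inequality for $y(t) := \|\bw(t)\|_H^2$ of the form $y' \leq -\mu\alpha^{-\gamma}y^{1-\gamma/2} - \tilde{\beta}y + \epsilon$; both the linear-rate and double-exponential-rate claims follow from reading off this ODE in different regimes.

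First, I would subtract \eqref{NSEtrue_leray} from \eqref{nonlinDA_Leray}, take the $H$ inner product of the resulting equation with $\bw$, and use the identities $(B(\bv,\bw),\bw)=0$ and $B(\bu,\bu) - B(\bv,\bv) = B(\bw,\bu) + B(\bv,\bw)$. The transport term is controlled via \eqref{BIN} and the absorbing-ball bound $\|\bu\|_V^2 \leq 2\lambda_1 G^2\nu^2$ from Theorem~\ref{ubounds} (valid for $t \geq \tilde{t}$), yielding $|(B(\bw,\bu),\bw)| \leq \tfrac{\nu}{4}\|\bw\|_V^2 + 2c^2\lambda_1 G^2\nu\|\bw\|_H^2$ via Young. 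For the linear nudging I would decompose $-\beta(I_h\bw,\bw) = -\beta\|\bw\|_H^2 + \beta(\bw - I_h\bw,\bw)$ and handle the remainder by \eqref{Ih_interp_bound_1} and Young; under $\beta c_0 h^2 < \nu$ this leaves an effective $-\tilde{\beta}\|\bw\|_H^2$ contribution with $\tilde{\beta}>0$, and the hypothesis $\beta > c^2\lambda_1 G^2\nu$ ensures that $\tilde{\beta}$ absorbs the transport term.

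The crux is the nonlinear nudging term $-\mu(\mathcal{N}(I_h\bw),\bw)$. Using $(I_h\bw,\bw) \geq 0$ from \eqref{Ih_positive} and $\|I_h\bw\|_H^{-\gamma} \geq \alpha^{-\gamma}\|\bw\|_H^{-\gamma}$ from \eqref{Ih_bounded}, together with the identity $(I_h\bw,\bw) = \|\bw\|_H^2 - (\bw - I_h\bw,\bw)$ and \eqref{Ih_interp_bound_1}, I obtain
\begin{align*}
-\mu(\mathcal{N}(I_h\bw),\bw) \;\leq\; -\mu\alpha^{-\gamma}\|\bw\|_H^{2-\gamma} \;+\; \mu\alpha^{-\gamma}\sqrt{c_0}\, h\, \|\bw\|_V\,\|\bw\|_H^{1-\gamma}.
\end{align*}
Young's inequality with conjugate exponents $(2-\gamma,(2-\gamma)/(1-\gamma))$ splits the cross term: the $\|\bw\|_H^{2-\gamma}$ piece is absorbed into half of $-\mu\alpha^{-\gamma}\|\bw\|_H^{2-\gamma}$ (valid for $h$ small), and the $\|\bw\|_V^{2-\gamma}$ piece is balanced against $\tfrac{\nu}{2}\|\bw\|_V^2$ via Lemma~\ref{small_h} at a cost of at most $\epsilon/2$. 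The explicit bound on $h$ in the theorem statement is precisely the condition that makes Lemma~\ref{small_h} applicable with this error budget.

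Combining these estimates yields the advertised ODE for $y(t)$ on $t \geq \tilde{t}$. Discarding the nonpositive first term reduces it to the standard linear-nudging analysis of \cite{Azouani_Olson_Titi_2014}, giving $y(t) \to 0$ at least exponentially (the first assertion). For the double-exponential window I would pick $t_0 \geq \tilde{t}$ at which $y(t_0)$ has been driven sufficiently small by the exponential decay, restrict to the interval $[t_0,t^*]$ on which $y(t) > \epsilon$, and apply the elementary inequality $y^{-\gamma/2} \geq 1 - (\gamma/2)\log y$ valid for $y \in (0,1]$ (a rescaling of $1 - y^{-\gamma} \leq \log(y^\gamma)$ noted in the text) to rewrite $-y^{1-\gamma/2} = -y\cdot y^{-\gamma/2} \leq -y + (\gamma/2)\,y\log y$. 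Setting $\lambda := \mu\alpha^{-\gamma}\gamma > 1$ (hypothesis \eqref{first_mu_condition}), and using $y > \epsilon$ to drop the remaining negative linear terms and the additive $+\epsilon$, the inequality collapses to a comparison ODE $\tilde{y}' = \lambda\tilde{y}\log\tilde{y} + \tilde{y}$; this linearizes under $z := \log\tilde{y} + 1/\lambda$ into $z' = \lambda z$ and integrates explicitly to the double-exponential form displayed in the statement, which the ODE comparison principle then transfers to $y$. The main obstacle is the nonlinear-nudging estimate above: the hypotheses \eqref{Ih_positive} and \eqref{Ih_bounded} are strictly weaker than symmetry or a projection property, so controlling $\|I_h\bw\|_H$ simultaneously from above and below in terms of $\|\bw\|_H$ is delicate — the identity-plus-positivity manipulation is the only workaround I see, and it forces both the $\alpha^{-\gamma}$ factor in the decay exponent and the residual $\|\bw\|_V^{2-\gamma}$ term through which Lemma~\ref{small_h} introduces the unavoidable $\epsilon$ barrier.
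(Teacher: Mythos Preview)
Your proposal is correct and follows essentially the same approach as the paper: energy estimate on $\bw$ in $H$, extraction of the $-\mu\alpha^{-\gamma}\|\bw\|_H^{2-\gamma}$ term via \eqref{Ih_positive}--\eqref{Ih_bounded} and the identity $(I_h\bw,\bw)=\|\bw\|_H^2-(\bw-I_h\bw,\bw)$, invocation of Lemma~\ref{small_h} to control the residual $\|\bw\|_V^{2-\gamma}$ term at cost $\epsilon$, and the logarithmic substitution to turn the Bernoulli-type inequality into the double-exponential bound. The only cosmetic differences are that the paper applies Poincar\'e directly to the cross term $\|\bw\|_V\|\bw\|_H^{1-\gamma}$ (rather than your Young split, which requires an extra absorption in $h$), and that the paper drops the $\beta$ term entirely during the double-exponential phase and bounds the transport contribution by $\mu\alpha^{-\gamma}\|\bw\|_H^2$ rather than retaining your $-\tilde\beta y$; both routes yield the same comparison ODE after discarding nonpositive terms.
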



\begin{proof}
Assume the hypotheses and let $\bw := \bv-\bu$.  
We take the difference of \eqref{NSEtrue_leray} and \eqref{nonlinDA_Leray}, yielding the system
\begin{subequations}
\begin{align}\label{NSE_diff_1}
 \bw_t + B(\bw,\bu) + B(\bv,\bw) + \nu A\bw &= -\mu P_\sigma  \mathcal{N}(\bw) - \beta P_\sigma I_h(\bw) 
 \\
 \bw(\bx,0) &= \bv_0 - \bu_0.
\end{align}
\end{subequations}
We take the action of \eqref{NSE_diff_1} with $\bw$ and use the Lions-Magenes Lemma to obtain
\begin{align}\label{NSE_diff_1_ip}
 \frac{1}{2}\frac{d}{dt}\|\bw\|_H^2 + \ip{B(\bw,\bu)}{\bw} + \nu \|\bw\|_V^2 &= -\mu(\mathcal{N}(\bw),\bw) - \beta(I_h(\bw),\bw)
\end{align}
Suppose without loss of generality that $\|\bw(0)\|_H > 1$.  Since $(I_h\bw, \bw) \geq 0$, the right-hand side of \eqref{NSE_diff_1_ip} is non-positive, and thus
\begin{align}
 \frac{1}{2}\frac{d}{dt}\|\bw\|_H^2 + \ip{B(\bw,\bu)}{\bw} + \nu \|\bw\|_V^2 &\leq - \beta(I_h(\bw),\bw).
\end{align}
Following the analysis of \cite{Carlson_Hudson_Larios_2020}, we obtain the energy estimate
\begin{align*}
    \frac{d}{dt}\|\bw\|_H^2 + \lp \beta - \frac{c^2}{\nu}\|\bu\|_V^2\rp\|\bw\|_H^2 \leq 0.
\end{align*}
Since we have chosen $\beta > c^2 \lambda_1 G^2\nu$, we can continue to follow the analysis of \cite{Carlson_Hudson_Larios_2020} to obtain exponential convergence for $t \geq \tilde{t}$, $\tilde{t}$ being $t_0$ given in Theorem \ref{ubounds}.  

If $\|\bw(\tilde{t})\|_H^2 < \epsilon$, then we are done.  Otherwise, due to the exponential convergence for $t \geq \tilde{t}$ and the fact that $\bw \in C([0,T];V)$, there is an interval $[t_0,t^*]$ over which \[\epsilon < \|\bw(t)\|_H^2 < \min\left\{e^{-\frac{\alpha^\gamma}{\gamma\mu}},\lp\frac{\mu\alpha^{-\gamma}}{\mu\alpha^{-\gamma}+1}\rp^{1/\gamma}\right\}\] for $t\in [t_0,t^*]$.  Thus, denoting $\eta:= \alpha^{-\gamma}$ and utilizing $\|I_h\bw\|_H \leq \alpha\|\bw\|_H$, \eqref{BIN}, the Cauchy-Schwarz inequality, Young's inequality, and Poincar{\'e}'s inequality, for a.e. $t \in [t_0,t^*]$,
\begin{align*}
&\quad
  \frac{1}{2}\frac{d}{dt}\|\bw\|_H^2 + \nu \|\bw\|_V^2 
  \\
  &\leq - \ip{B(\bw,\bu)}{\bw} -\mu \eta(\|\bw\|_H^{-\gamma}I_h(\bw), \bw) - \beta (I_h\bw,\bw) 
  \\
  &\leq - \ip{B(\bw,\bu)}{\bw} -\mu \eta(\|\bw\|_H^{-\gamma}I_h(\bw), \bw) 
  \\
  &\leq -\ip{B(\bw,\bu)}{\bw} + \mu\eta[-\|\bw\|_H^{2-\gamma} + \|\bw\|_H^{-\gamma}(\bw-I_h\bw,\bw)]
  \\
  &\leq c \|\bw\|_H\|\bw\|_V\|\bu\|_V - \mu \eta\|\bw\|_H^{2-\gamma} + \mu\eta\sqrt{c_0}h\|\bw\|_V\|\bw\|_H^{1-\gamma} \\
  &\leq \frac{c^2}{2\nu}\|\bu\|_V^2\|\bw\|_H^2 + \frac{\nu}{2} \|\bw\|_V^2 - \mu\eta \|\bw\|_H^{2-\gamma} + \frac{\mu\eta\sqrt{c_0}h}{\lambda_1^{(1-\gamma)/2}} \|\bw\|_V^{2-\gamma}.
\end{align*}
By Theorem \ref{ubounds}, $\|\bu(t)\|_V^2 \leq 2\lambda_1G^2(\nu)^2$ for all $t \in [t_0, T]$, so condition \eqref{first_mu_condition} implies that $\mu > \frac{c^2}{2\nu}\|\bu\|_V^2$ for all $t \in [t_0, T]$, and hence
\begin{align*}
 \frac{1}{2}\frac{d}{dt}\|\bw\|_H^2 + \frac{\nu}{2}\|\bw\|_V^2 - \frac{\mu\eta\sqrt{c_0}h}{\lambda_1^{(1-\gamma)/2}} \|\bw\|_V^{2-\gamma} &\leq \frac{c^2}{2\nu}\|\bu\|_V^2\|\bw\|_H^2 - \mu \eta\|\bw\|_H^{2-\gamma} \\
 &\leq \mu\eta (\|\bw\|_H^2-\|\bw\|_H^{2-\gamma}).
\end{align*}

We can write expression involving the $\|\bw\|_V$ terms on the left-hand side in the form of $f(x) = ax^2-b(h)x^{2-\gamma}$, where $x$ is taken to be $\|\bw\|_V$.  By Lemma \ref{small_h}, the term $b(h)$ determines the minimum value of $f(x)$ 
and it can be shown via the proof of Lemma \ref{small_h} that $h$ is small enough so that the condition $\frac{\nu}{2}\|\bw\|_V^2 - \frac{\mu\eta\sqrt{c_0}h}{\lambda_1^{(1-\gamma)/2}} \|\bw\|_V^{2-\gamma} \geq -\epsilon/2 $ holds.  Note that $h$ is bounded above by an expression involving the constant $a$.  

As a consequence of our smallness condition on $h$,
\begin{align*}
 \frac{1}{2}\frac{d}{dt} \|\bw\|_H^2 - \epsilon/2 &\leq \mu\eta(\|\bw\|_H^2-\|\bw\|_H^{2-\gamma})
\end{align*}
or simply
\begin{align}\label{bernoulli_ineq_1}
 \frac{d}{dt}\|\bw\|_H^2 \leq 2\mu\eta(\|\bw\|_H^2-\|\bw\|_H^{2-\gamma}) + \epsilon.
\end{align}

Furthermore, we note that the first term on the right-hand side is negative, so applying the fact that for $y\in (0,1]$ we have $1-y^{-\gamma} \leq \log(y^\gamma)$,
\begin{align}\label{super_exp_ineq_1}
 \frac{d}{dt} \|\bw\|_H^2 &\leq 2\mu\eta (1-\|\bw\|_H^{-\gamma}) \|\bw\|_H^2 + \epsilon \notag\\
 &\leq 2\mu\eta(\log{\|\bw\|_H^{\gamma}})\|\bw\|_H^2 + \epsilon \notag \\
 &= \gamma \mu\eta (\log{\|\bw\|_H^2})\|\bw\|_H^2 + \epsilon.
\end{align}

Thus, we have two inequalities \eqref{bernoulli_ineq_1}, a Bernoulli type differential inequality, and \eqref{super_exp_ineq_1}, each of which provides different information.  We analyze \eqref{bernoulli_ineq_1} first to directly obtain convergence to $\epsilon$ in finite time.

By our initial assumptions, we note specifically that $\|\bw(t)\|_H^2 > \epsilon$ for all $t \in[t_0,t^*]$, and therefore for a.e. $t \in [t_0,t^*]$,
\begin{align*}
  \frac{d}{dt} \|\bw\|_H^2 &\leq 2(\mu\eta +1)\|\bw\|_H^2 -2\mu\eta \|\bw\|_H^{2-\gamma}.
\end{align*}
With $z =  \|\bw\|_H^{\gamma}$,
\begin{align*}
\frac{dz}{dt} &\leq \gamma(\mu\eta +1)\left(z-\frac{\mu\eta }{\mu\eta +1}\right),
\end{align*}
which can be rewritten as
\begin{align*}
\frac{d}{dt}\log{\left(\frac{\mu\eta }{\mu\eta +1}-z\right)} &\geq \gamma(\mu\eta +1)
\end{align*}
and integrating from $t_0$ to $t^*$,
\begin{align*}
 z(t^*) &\leq \frac{\mu\eta }{\mu\eta +1} -  \left(\frac{\mu\eta }{\mu\eta +1}-z(t_0)\right) e^{\gamma(\mu\eta +1)(t^*-t_0)},
\end{align*}
or in other words,
\begin{align*}
 \|\bw(t^*)\|_H^{\gamma} &\leq \frac{\mu\eta }{\mu\eta +1} -  \left(\frac{\mu\eta }{\mu\eta +1}-\|\bw(t_0)\|_H^{\gamma}\right) e^{\gamma(\mu\eta +1)(t^*-t_0)}.
\end{align*}
The right-hand side of this inequality approaches $-\infty$ as $t^* \to \infty$.  Note that $t^*$ is fixed, but since we have demonstrated that on this time interval $\|\bw(t)\|_H$ decays in time, we can extend $t^*$ until $\|\bw\|_H^2 = \epsilon$.

We note that the decay rate itself is better characterized by utilizing the inequality \eqref{super_exp_ineq_1}.  
Again, since $\|\bw(t)\|^2 > \epsilon$ for all $t\in [t_0,t^*]$, then for a.e. $t \in [t_0,t^*]$,
\begin{align*}
  \frac{d}{dt}\|\bw\|_H^2 \leq (\gamma \mu\eta (\log{\|\bw\|_H^2})+1)\|\bw\|_H^2.
\end{align*}
Substituting $z = -\log{\|\bw\|_H^2}$, we obtain
\begin{align*}
 \frac{dz}{dt} \geq \gamma \mu\eta z-1,
\end{align*}
which is equivalent to stating that
\begin{align*}
 \frac{d}{dt}\left( \log{(\mu\eta\gamma z-1)}\right) \geq \mu\eta\gamma.
\end{align*}
Integrating over the interval $[t_0,t^*]$, we have that 
\begin{align*}
z(t^*) \geq \frac{1}{\mu\eta\gamma}\left(1 + e^{\log{(\mu\eta\gamma z(t_0)-1)} + \mu\eta\gamma(t^*-t_0)}\right),
\end{align*}
which can be rewritten as
\begin{align*}
 \log{\|\bw(t^*)\|_H^2} &\leq -\frac{1}{\mu\eta\gamma}\left(1 + e^{\log{(-(\mu\eta\gamma-1)\log{\|\bw(t_0)\|_H^2})} + \mu\eta\gamma(t^*-t_0)}\right).
 \end{align*}
 This implies
 \begin{align*}
 \|\bw(t^*)\|_H^2 &\leq A\exp\pnt{-b\exp\pnt{\mu\eta\gamma(t^*-t_0)}},
\end{align*}
where $A := \exp\pnt{-\frac{1}{\mu\eta\gamma}}$ and $b :=\frac{1}{\mu\eta\gamma} (-(\mu\eta\gamma-1)\log{\|\bw(t_0)\|_H^2})$.  
Since this inequality indicates that $\|\bw(t)\|_H^2$ decays monotonically at least double-exponentially, we note that we can extend $t^*$ until $\|\bw(t^*)\|_H^2 = \epsilon$.

Now, we note that convergence to $0$ still holds, since, only using the assumptions on $\mu$ and $h$,
\begin{align*}
  \frac{1}{2}\frac{d}{dt}\|\bw\|_H^2 +\ip{B(\bw,\bu)}{\bw} + \nu\|\bw\|_V^2 
  &= 
  -\mu(P_\sigma\mathcal{N}(I_h\bw),\bw) 
  \\&= 
  -\mu(|I_h\bw|^{-\gamma}I_h\bw,\bw) -\mu(I_h\bw,\bw)
  \\&\leq 
  -\mu (I_h\bw,\bw),
\end{align*}
and therefore by Theorem \ref{CDA_Hnorm}, by our choice of $\mu$ and $h$, convergence to $0$ still holds.
\end{proof}

\begin{remark}\label{remark_eps_barrier}
Note that in Theorem \ref{first_conv_theorem}, convergence in finite time double-exponentially holds by simply analyzing \eqref{super_exp_ineq_1}.  If it was possible for the proof to be improved to shrink $\epsilon$ to $0$, then the inequality \eqref{bernoulli_ineq_1} demonstrates that we would still obtain convergence in the $H$ norm in \textit{finite} time.  The main roadblock keeping us from sending $\epsilon$ to $0$ is that, unlike in the linear-nudging case, where we can employ the inequality $\mu c_0 h^2 < \nu$, in the nonlinear-nudging case, the analogous inequality is $\mu c_0 h^2 \|\bw\|^{-\gamma} < \nu$.  Hence, as $\|\bw\|_H \to 0$, eventually this bound will be violated.
\end{remark}

\begin{corollary}
Assume the hypotheses of Theorem \ref{first_conv_theorem}, except with assumption \eqref{Ih_positive} replaced by assumption \eqref{Ih_symmetric}.  Then the \textit{unique} strong solution of \eqref{nonlinDA_Leray} satisfies the same conclusions of Theorem \ref{first_conv_theorem}.
\end{corollary}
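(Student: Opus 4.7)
My plan is to reduce the corollary to a direct invocation of Theorem~\ref{first_conv_theorem}. Inspecting the proof of that theorem, the only role played by the positivity hypothesis \eqref{Ih_positive} is to guarantee that the term $-\beta(I_h\bw, \bw)$ in the energy inequality is non-positive, so that it may be dropped from the right-hand side; every subsequent step—the interpolation bound on $(I_h\bw, \bw) = \|\bw\|_H^2 - (\bw - I_h\bw, \bw)$, the split of the nonlinear-nudging inner product into $-\mu\eta\|\bw\|_H^{2-\gamma} + \mu\eta\|\bw\|_H^{-\gamma}(\bw - I_h\bw, \bw)$, the Bernoulli-type inequality, and the double-exponential estimate—uses only \eqref{Ih_always_assumptions} and \eqref{Ih_bounded}.

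Next I would observe that in order for the phrase ``the \emph{unique} strong solution'' in the corollary's conclusion to be meaningful, Theorem~\ref{uniqueness} must apply, and that theorem requires \eqref{Ih_always_assumptions}, the idempotency \eqref{Ih_projection}, and the symmetry \eqref{Ih_symmetric}. Hence \eqref{Ih_projection} is tacitly assumed alongside \eqref{Ih_symmetric}. With both in hand I would recover \eqref{Ih_positive} by the short computation
\begin{align*}
(I_h\bphi, \bphi) = \|I_h\bphi\|_H^2 + (I_h\bphi, \bphi - I_h\bphi) = \|I_h\bphi\|_H^2 + (\bphi, I_h\bphi - I_h^2\bphi) = \|I_h\bphi\|_H^2 \geq 0,
\end{align*}
where the second equality uses symmetry and the final equality uses idempotency.

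With \eqref{Ih_positive} now verified as a consequence of \eqref{Ih_symmetric} and \eqref{Ih_projection}, every hypothesis of Theorem~\ref{first_conv_theorem} is satisfied. Applying that theorem to the unique strong solution furnished by Theorem~\ref{uniqueness} yields all the desired conclusions: exponential convergence in the $H$ norm for $t \geq \tilde t$, and—whenever $\|\bw(\tilde t)\|_H > \epsilon$—the double-exponential bound
\begin{align*}
\|\bv(t)-\bu(t)\|_H^2 \leq A\exp(-b\exp(\mu\alpha^{-\gamma}\gamma(t - t_0)))
\end{align*}
on the interval $[t_0, t^*]$, with the same constants $A$ and $b$ specified in Theorem~\ref{first_conv_theorem}. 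I do not anticipate a substantive analytical obstacle; the main point to be careful about is the bookkeeping above, namely to make the implicit appearance of \eqref{Ih_projection} explicit and to verify once more that no step of the proof of Theorem~\ref{first_conv_theorem} uses \eqref{Ih_positive} in a way not subsumed by the elementary identity just displayed.
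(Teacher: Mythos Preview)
Your proposal is correct and follows essentially the same route as the paper: establish that the symmetry hypothesis \eqref{Ih_symmetric} (together with idempotency \eqref{Ih_projection}) yields positivity \eqref{Ih_positive}, invoke Theorem~\ref{uniqueness} for the uniqueness claim, and then apply Theorem~\ref{first_conv_theorem} verbatim. The paper's own proof is terser---it simply asserts ``assumption \eqref{Ih_symmetric} implies \eqref{Ih_positive}'' and then appeals to Theorems~\ref{uniqueness} and \ref{first_conv_theorem}---whereas you correctly make explicit that \eqref{Ih_projection} is needed alongside \eqref{Ih_symmetric} both for the uniqueness theorem to apply and for the positivity computation $(I_h\bphi,\bphi)=(I_h^2\bphi,\bphi)=(I_h\bphi,I_h\bphi)=\|I_h\bphi\|_H^2\ge0$ to go through.
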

\begin{proof}
    Observe that assumption \eqref{Ih_symmetric} implies \eqref{Ih_positive}. Therefore, by Theorem 3.4 under the more restrictive assumption \eqref{Ih_symmetric} (instead of \eqref{Ih_positive}), \eqref{nonlinDA_Leray} has a global unique strong solution.  By Theorem \ref{first_conv_theorem} this unique strong solution converges to the unique reference strong solution as 
    claimed.
\end{proof}

In the following theorem, we provide a proof of the double-exponential and finite time convergence of $\bv$ to $\bu$ in the $V$ norm.  In this setting, we require a slightly different restriction on the interpolant, namely \eqref{Ih_A_positive}.

\begin{theorem}\label{second_conv_theorem}
 Fix $0<\gamma<1$.  Let $I_h$ satisfy  \eqref{Ih_always_assumptions} and \eqref{Ih_A_positive}. Let $\bv\in C([0,T];V)\cap L^2(0,T;\mathcal{D}(A))$ be a strong solution to \eqref{nonlinDA_Leray} with initial data $\bv_0 \in V$ and forcing $\bbf \in H$ and $\bu \in C([0,T];V)\cap L^2(0,T;\mathcal{D}(A))$ the strong solution to the \eqref{NSEtrue_leray} with initial data $\bu_0 \in V$ and the same forcing $\bbf$.  Fix $0 < \epsilon \ll \min\{1, \|\bu(0)-\bv_0\|_V\}$.  Let $\mu, \beta$ be chosen so
 that 
 \begin{align}\label{second_mu_condition}
 \mu &> \max\left\{(\sqrt{c_0}+\lambda_1^{-1/2})^\gamma c\lambda_1^2(\nu)^2(1+G)^4,\frac{1}{\gamma\lambda^{\gamma/2}}, 3\lambda_1\nu J G\right\},
 \\
 \beta &> 3\lambda_1 \nu JG
 \end{align}
 where $c$ is the constant given by the inequality \eqref{BIN3}, and \[J := 2c \log\left(2c^{3/2}\right)+4c\log(1+G).\]  
 Choose $h$ such that
 \begin{itemize}
  \item $h <1$ (where $1$ has units of length, i.e., it is the linear size of the domain)
  \item $\mu c_0 h^2 < \nu$ and
  \item $h \leq \frac{a (\epsilon/2)^{\gamma/2}(\nu)^{1-\gamma/2}}{\mu\sqrt{c_0}}$,
 \end{itemize}
where $a := \lp\lp \frac{2-\gamma}{2}\rp^{(2-\gamma)/\gamma} - \lp \frac{2-\gamma}{2}\rp^{2/\gamma}\rp^{\gamma/2}\lambda_1^{(1-2\gamma)/2}2^{\gamma/2-1}$. 
Then for all $t > \tilde{t}$ (where $\tilde{t}$ is prescribed in Theorem \ref{ubounds}), $\|\bv-\bu\|_V^2 \to 0$ at least exponentially as in \cite{Azouani_Olson_Titi_2014}.  If $\|\bv(\tilde{t}) - \bu(\tilde{t})\| > \epsilon$, then there is a time interval $[t_0, t^*]$ such that $\|\bv-\bu\|_V^2 \to \epsilon$ at a double-exponential rate.  In particular,
\begin{align*}
 \|\bv(t)-\bu(t)\|_V^2 &\leq Ke^{-be^{\mu\gamma\lambda^{\gamma/2}(t-t_0)}},
\end{align*}
where $K := e^{-\frac{1}{\mu\gamma\lambda^{\gamma/2}}}$ and $b :=\frac{1}{\mu\gamma\lambda^{\gamma/2}} (-(\mu\gamma\lambda^{\gamma/2}-1)\log{\|\bv(t_0)-\bu(t_0)\|_V^2})$.  
\end{theorem}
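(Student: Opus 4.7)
The plan is to mirror the proof of Theorem \ref{first_conv_theorem}, but with the energy estimate performed at one derivative higher. Setting $\bw := \bv-\bu$, I would derive the error equation
\begin{align*}
\bw_t + B(\bw,\bu) + B(\bv,\bw) + \nu A\bw = -\mu P_\sigma \mathcal{N}(I_h\bw) - \beta P_\sigma I_h\bw, \qquad \bw(0) = \bv_0 - \bu_0,
\end{align*}
and test against $A\bw$. Writing $B(\bv,\bw) = B(\bw,\bw) + B(\bu,\bw)$, using \eqref{wwAw0} to cancel $(B(\bw,\bw),A\bw)$ and \eqref{jacobi} to reduce $(B(\bw,\bu)+B(\bu,\bw),A\bw)$ to $-(B(\bw,\bw),A\bu)$, one obtains
\begin{align*}
\tfrac{1}{2}\tfrac{d}{dt}\|\bw\|_V^2 + \nu\|A\bw\|_H^2 = (B(\bw,\bw),A\bu) - \mu\|I_h\bw\|_H^{-\gamma}(I_h\bw,A\bw) - \beta(I_h\bw,A\bw).
\end{align*}

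Next I would estimate each term on the right. The bilinear term is controlled by a logarithmic Brezis--Gallouet-type inequality exactly as in the proof of Theorem \ref{CDA_Vnorm} in \cite{Azouani_Olson_Titi_2014}; using \eqref{Au L2 bound} and the standard log-manipulation yields $|(B(\bw,\bw),A\bu)| \leq \tfrac{\nu}{4}\|A\bw\|_H^2 + c\nu\lambda_1 JG\|\bw\|_V^2$, which is where the constant $J$ enters. For the linear-nudging term, \eqref{Ih_A_positive} ensures non-positivity; splitting $(I_h\bw,A\bw) = \|\bw\|_V^2 - (\bw-I_h\bw,A\bw)$, bounding the remainder with \eqref{Ih_interp_bound_1} and Young's inequality, and using the smallness of $h$, extracts an effective $-\tfrac{\beta}{2}\|\bw\|_V^2$ which, under $\beta > 3\lambda_1\nu JG$, dominates the bilinear contribution and produces the baseline exponential-to-zero decay of Theorem \ref{CDA_Vnorm}. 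For the nonlinear-nudging term, the analogous splitting gives
\begin{align*}
-\mu\|I_h\bw\|_H^{-\gamma}(I_h\bw,A\bw) = -\mu\|I_h\bw\|_H^{-\gamma}\|\bw\|_V^2 + \mu\|I_h\bw\|_H^{-\gamma}(\bw-I_h\bw,A\bw).
\end{align*}
By \eqref{interp_bound_2} with $h<1$, one has $\|I_h\bw\|_H \leq (\sqrt{c_0}+\lambda_1^{-1/2})\|\bw\|_V$, so the first piece delivers the key driving estimate $-\mu(\sqrt{c_0}+\lambda_1^{-1/2})^{-\gamma}\|\bw\|_V^{2-\gamma}$, whose constant is absorbed via the hypothesis $\mu > (\sqrt{c_0}+\lambda_1^{-1/2})^\gamma c\lambda_1^2\nu^2(1+G)^4$ to leave the effective coefficient $\mu\lambda_1^{\gamma/2}$ featured in the conclusion. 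The cross term is bounded by $\mu\sqrt{c_0}h\|I_h\bw\|_H^{-\gamma}\|\bw\|_V\|A\bw\|_H$; Young's inequality splits it into $\delta\|A\bw\|_H^{2-\gamma}$ plus $\tfrac{\nu}{4}\|A\bw\|_H^2$, with $\delta$ controlled by $h$, and the smallness hypothesis $h\leq a(\epsilon/2)^{\gamma/2}\nu^{1-\gamma/2}/(\mu\sqrt{c_0})$ is calibrated precisely so that Lemma \ref{small_h}, applied with $x=\|A\bw\|_H$, yields $\tfrac{\nu}{2}\|A\bw\|_H^2 - \delta\|A\bw\|_H^{2-\gamma} \geq -\epsilon/2$.

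Combining everything on the interval where $\|\bw\|_V^2 > \epsilon$ produces a Bernoulli-type differential inequality
\begin{align*}
\tfrac{d}{dt}\|\bw\|_V^2 \leq 2\mu\lambda_1^{\gamma/2}\bigl(\|\bw\|_V^2 - \|\bw\|_V^{2-\gamma}\bigr) + \epsilon,
\end{align*}
mirroring \eqref{bernoulli_ineq_1}. The same logarithmic manipulation used in Theorem \ref{first_conv_theorem}, namely $1-y^{-\gamma}\leq \log(y^\gamma)$ for $y\in(0,1]$, converts this into $\tfrac{d}{dt}\|\bw\|_V^2 \leq \mu\gamma\lambda_1^{\gamma/2}\log(\|\bw\|_V^2)\|\bw\|_V^2 + \epsilon$, and the substitution $z := -\log\|\bw\|_V^2$ yields a linear ODE in $z$ whose integration gives the claimed double-exponential bound; once $\|\bw\|_V^2 \leq \epsilon$, the linear-nudging mechanism carries $\|\bw\|_V$ to zero exponentially via the AOT theory. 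I anticipate that the main technical obstacle will be the control of the cross term $\mu\|I_h\bw\|_H^{-\gamma}(\bw-I_h\bw,A\bw)$: the factor $\|I_h\bw\|_H^{-\gamma}$ is unbounded as $I_h\bw\to 0$, and forcing its contribution below $\epsilon/2$ is exactly what necessitates both the restriction on $h$ and the invocation of Lemma \ref{small_h}, mirroring the $\epsilon$-barrier obstruction discussed in Remark \ref{remark_eps_barrier}.
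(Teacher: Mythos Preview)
Your overall strategy matches the paper's proof, but there is a genuine technical gap in the handling of the nonlinear-nudging cross term. You split first, obtaining $\mu\|I_h\bw\|_H^{-\gamma}(\bw-I_h\bw,A\bw)$, and then claim that Young's inequality turns this into $\delta\|A\bw\|_H^{2-\gamma} + \tfrac{\nu}{4}\|A\bw\|_H^2$. This step fails: the factor $\|I_h\bw\|_H^{-\gamma}$ requires an \emph{upper} bound, i.e., a \emph{lower} bound on $\|I_h\bw\|_H$, and under the hypotheses \eqref{Ih_always_assumptions} and \eqref{Ih_A_positive} no such bound is available (neither $\|\bw\|_V^2>\epsilon$ nor \eqref{Ih_interp_bound_1} supplies one). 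The paper avoids this by reversing the order of operations: it first uses $(I_h\bw,A\bw)\geq 0$ together with \eqref{interp_bound_2} to replace $\|I_h\bw\|_H^{-\gamma}$ by $c(h)\|\bw\|_V^{-\gamma}$ in the \emph{entire} nonlinear term (the sign makes the inequality go the right way), and only then splits. The resulting cross term is $\mu c(h)\|\bw\|_V^{-\gamma}(\bw-I_h\bw,A\bw)\leq \mu c(h)\sqrt{c_0}h\,\|\bw\|_V^{1-\gamma}\|A\bw\|_H$, and Poincar\'e (not Young) converts $\|\bw\|_V^{1-\gamma}$ to $\lambda_1^{-(1-\gamma)/2}\|A\bw\|_H^{1-\gamma}$, giving the clean $\|A\bw\|_H^{2-\gamma}$ term to which Lemma \ref{small_h} applies against the remaining $\tfrac{\nu}{2}\|A\bw\|_H^2$.

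A secondary clarification: in the double-exponential phase the paper does \emph{not} use the Brezis--Gallouet estimate on $(B(\bw,\bw),A\bu)$. It uses \eqref{BIN3} together with the absorbing-ball bound \eqref{Au L2 bound} on $\|A\bu\|_H$, producing a $\|\bw\|_V^2$ contribution with coefficient proportional to $(1+G)^4$. This is absorbed by the $-\mu\eta\|\bw\|_V^2$ piece coming from the decomposition $-\|\bw\|_V^{2-\gamma} = (1-\|\bw\|_V^{-\gamma})\|\bw\|_V^2 - \|\bw\|_V^2$, and \emph{that} is where the hypothesis $\mu > (\sqrt{c_0}+\lambda_1^{-1/2})^\gamma c\lambda_1^2\nu^2(1+G)^4$ enters. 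The Brezis--Gallouet/$J$ machinery is invoked only for the baseline exponential decay driven by $\beta$, exactly as in \cite{Azouani_Olson_Titi_2014}. Your narrative conflates the two estimates; keeping them separate makes the role of each hypothesis transparent.
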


\begin{remark}
Note that in the case where $I_h = P_m$, the projection onto the Fourier modes of index $m < 1/h$, it is clear that both \eqref{Ih_A_positive} and \eqref{Ih_bounded} hold (with $\alpha=1$), so the hypotheses of the theorem hold in this example.
\end{remark}

\begin{proof}
Let $\bw := \bv-\bu$.  
We take the difference of \eqref{NSEtrue_leray} with \eqref{nonlinDA_Leray}, yielding the system
\begin{align}\label{NSE_diff_2}
 \bw_t + B(\bw,\bu) + B(\bv,\bw) + \nu A\bw &= -\mu P_\sigma\mathcal{N}(I_h(\bw)) - \beta P_\sigma I_h(\bw) \\
 \bw(\bx,0) &= \bv_0 - \bu_0\notag.
\end{align}
Taking the $H$ inner-product of \eqref{NSE_diff_2} with $A\bw$ and applying the Lions-Magenes Lemma, we obtain
\begin{align*}
&\quad
 \frac{1}{2}\frac{d}{dt} \|\bw\|_V^2 + (B(\bu,\bw),A\bw) + (B(\bw,\bu),A\bw) + (B(\bw,\bw),A\bw) + \nu \|A\bw\|_H^2 
 \\&= 
 -\mu (\mathcal{N}(\bw),A\bw) - \beta (I_h\bw, A\bw),
\end{align*}
which can be rewritten (using \eqref{jacobi}) as 
\begin{align*}
 \frac{1}{2}\frac{d}{dt} \|\bw\|_V^2 - (B(\bw,\bw),A\bu) + \nu \|A\bw\|_H^2 =-\mu (\mathcal{N}(\bw),A\bw) - \beta (I_h\bw, A\bw)
\end{align*}

Suppose without loss of generality $\|\bw(0)\|_V > 1$.  Via assumption \eqref{Ih_A_positive},
\begin{align*}
 \frac{1}{2}\frac{d}{dt} \|\bw\|_V^2 - (B(\bw,\bw),A\bu) + \nu \|A\bw\|_H^2 \leq - \beta (I_h\bw, A\bw).
\end{align*}
Following the analysis of \cite{Azouani_Olson_Titi_2014}, we obtain the estimate
\begin{align}
    \frac{d}{dt}\|\bw\|_V^2 + \frac12\left[\beta - \frac{J^2}{\beta}\|A\bu\|_H^2\right] \|\bw\|_V^2 \leq 0.
\end{align}
Since $\beta > 3\lambda_1 \nu JG$, we can continue to follow the analysis to obtain exponential convergence for $t \geq \tilde{t}$, where $\tilde{t}$ is $t_0$ from Theorem \ref{ubounds}.
If $\|\bw(\tilde{t})\|_V^2 < \epsilon$, then we are done.   Otherwise, due to the exponential convergence for $t \geq \tilde{t}$ and the fact that $\bw \in C([0,T];V)$, there is an interval $[t_0,t^*]$ over which $\epsilon < \|\bw(t)\|_H^2 <\min\left\{e^{-\frac{1}{\mu\gamma\lambda_1^{\gamma/2}}},\lp\frac{\mu\lambda_1^{\gamma/2}}{\mu\lambda_1^{\gamma/2}+1}\rp^{1/\gamma}\right\}$ for $t\in [t_0,t^*]$.  Using $\|A\bu\|_H^2 \leq  c\lambda_1^2(\nu)^2(1+G)^4$ and $(I_h\bw,A\bw) \geq 0$, we have that for a.e. $t \in [t_0,t^*]$,
\begin{align*}
&\quad
 \frac{1}{2} \frac{d}{dt} \|\bw\|_V^2 + \nu \|A\bw\|_H^2 
 \\
 &\leq -(B(\bw,\bw),A\bu) - \mu c(h) \|\bw\|_V^{-\gamma}(I_h\bw,A\bw) - \beta(I_h\bw,A\bw)
 \\
 &\leq -(B(\bw,\bw),A\bu) - \mu c(h) \|\bw\|_V^{-\gamma}(I_h\bw,A\bw)
 \\
 &= -(B(\bw,\bw),A\bu) +\mu c(h) [ (1-\|\bw\|_V^{-\gamma})\|\bw\|_V^2 - \|\bw\|_V^2 \\
 &\quad + \|\bw\|_V^{-\gamma}(\bw-I_h\bw,A\bw)] \\
 &\leq c\|\bw\|_H^{1/2}\|A\bw\|_H^{1/2}\|\bw\|_V\|A\bu\|_H  \\
 &\phantom{=} + \mu c(h)(1-\|\bw\|_V^{-\gamma})\|\bw\|_V^2 \\
 &\phantom{=} + \mu c(h) \sqrt{c_0}h \|\bw\|_V^{-\gamma}\|\bw\|_V\|A\bw\|_H - \mu c(h) \|\bw\|_V^2 \\
  &\leq c\lambda_1\|A\bw\|_H\|\bw\|_V\|A\bu\|_H  \\
 &\phantom{=} + \mu c(h)(1-\|\bw\|_V^{-\gamma})\|\bw\|_V^2 \\
 &\phantom{=} + \mu c(h) \sqrt{c_0}h \|\bw\|_V^{1-\gamma}\|A\bw\|_H - \mu c(h) \|\bw\|_V^2 \\
&\leq \frac{c\lambda_1^2}{2\nu}\|\bw\|_V^2\|A\bu\|_H^2 + \frac{\nu}{2} \|A\bw\|_H^2 \\
 &\phantom{=} + \mu c(h)(1-\|\bw\|_V^{-\gamma})\|\bw\|_V^2 + \mu c(h) \sqrt{c_0}h \|\bw\|_V^{1-\gamma}\|A\bw\|_H \\
 &\phantom{=} - \mu c(h) \|\bw\|_V^2,
\end{align*}
where $c(h) = (\sqrt{c_0}h+\lambda_1^{-1/2})^{-\gamma}$.  Since $h <1$, it follows that $c(h) \geq(\sqrt{c_0}+\lambda_1^{-1/2})^{-\gamma} := \eta$. Note also that this constant can be bounded above by a constant independent of $h$, specifically, $c(h) \leq (\lambda^{-1/2})^{-\gamma} = \lambda_1^{\gamma/2}$.  Hence
\begin{align*}
&\quad
 \frac{1}{2}\frac{d}{dt} \|\bw\|_V^2 + \frac{\nu}{2} \|A\bw\|_H^2 + \left[\mu \eta - \frac{c\lambda_1^2}{2\nu}\|A\bu\|_H^2\right]\|\bw\|_V^2
 \\&\leq 
 \mu \eta (1-\|\bw\|_V^{-\gamma})\|\bw\|_V^2  + \frac{\mu  \sqrt{c_0}h}{\lambda_1^{(1-2\gamma)/2}} \|A\bw\|_H^{2-\gamma}.
\end{align*}
Since $\|A\bu(t)\|_H^2 \leq c \lambda_1^2(\nu)^2(1+G)^4$ for all $t \in[t_0, T]$ due to Theorem \ref{ubounds}, then the condition \eqref{second_mu_condition} implies that $\mu \eta - \frac{c\lambda_1^2}{2\nu}\|A\bu\|_H^2 \geq 0$.  Secondly, the same reasoning in Theorem \ref{first_conv_theorem} utilizing Lemma \ref{small_h} shows that $h$ is sufficiently small so that $\left(\frac{\nu}{2}\|A\bw\|_H^2 - \frac{\mu  \sqrt{c_0}h}{\lambda_1^{(1-2\gamma)/2}} \|A\bw\|_H^{2-\gamma}\right) \geq -\epsilon/2$ for our given tolerance $\epsilon > 0$.  
Hence, we obtain the inequality
\begin{align}\label{bernoulli_ineq}
 \frac{d}{dt} \|\bw\|_V^2 &\leq 2\mu \lambda_1^{\gamma/2} (1-\|\bw\|_V^{-\gamma})\|\bw\|_V^2 + \epsilon.
\end{align}

Furthermore, we note that the first term on the right-hand side is negative, and applying the fact that for $y\in (0,1]$ we have $1-y^{-\gamma} \leq \log(y^\gamma)$, we note that 
\begin{align}\label{super_exp_ineq}
 \frac{d}{dt} \|\bw\|_V^2 &\leq 2\mu\lambda^{\gamma/2}(\log{\|\bw\|_V^{\gamma}})\|\bw\|_V^2 + \epsilon \notag\\
 &= \gamma\mu\lambda^{\gamma/2}(\log{\|\bw\|_V^2})\|\bw\|_V^2 + \epsilon.
\end{align}
Thus, we have two inequalities analogous to those in Theorem \ref{first_conv_theorem}.  We once again analyze \eqref{bernoulli_ineq} first to directly obtain convergence to $\epsilon$ in finite time. 
By our initial assumptions, we note specifically that $\|\bw(t)\|_V^2 > \epsilon$ for all $t\in [t_0,t^*]$, and therefore for a.e. $t\in [t_0,t^*]$,

\begin{align*}
  \frac{d}{dt} \|\bw\|_V^2 &\leq 2(\mu \lambda_1^{\gamma/2}+1)\|\bw\|_V^2 -2\mu \lambda_1^{\gamma/2}\|\bw\|_V^{2-\gamma}.
\end{align*}
Using the same methods as in Theorem \ref{first_conv_theorem}, we obtain
\begin{align*}
 \|\bw(t^*)\|_V^{\gamma} &\leq \frac{\mu \lambda_1^{\gamma/2}}{\mu \lambda_1^{\gamma/2}+1} -  \left(\frac{\mu\lambda_1^{\gamma/2}}{\mu \lambda_1^{\gamma/2}+1}-\|\bw(t_0)\|_V^{\gamma}\right) e^{\gamma(\mu \lambda_1^{\gamma/2}+1)(t^*-t_0)},
\end{align*}
 and again note the right-hand side of this inequality approaches $-\infty$ as $t^* \to \infty$.  Note that $t^*$ was chosen fixed, but since we have demonstrated that on this time interval that $\|\bw(t)\|_V$ decays in time, we can extend $t^*$ until $\|\bw\|_V^2 = \epsilon$.

 As in Theorem \ref{first_conv_theorem} we note that the decay rate itself is better characterized by utilizing the inequality \eqref{super_exp_ineq}.  Since $\|\bw\|_V^2 > \epsilon$ for all $t \in [t_0,t^*]$, then for a.e. $t \in [t_0,t^*]$,
\begin{align*}
  \frac{d}{dt}\|\bw\|_V^2 \leq (\gamma \mu\lambda^{\gamma/2}(\log{\|\bw\|_V^2})+1)\|\bw\|_V^2.
\end{align*}
Following similar steps to those in the proof of Theorem \ref{first_conv_theorem}, we arrive at
\begin{align*}
 \log{\|\bw(t^*)\|_V^2} &\leq -\frac{1}{\mu\eta\gamma\lambda^{\gamma/2}}\left(1 + e^{\log{(-(\mu\gamma\lambda^{\gamma/2}-1)\log{\|\bw(t_0)\|_V^2})} + \mu\gamma\lambda^{\gamma/2}(t^*-t_0)}\right) \\
 \|\bw(t^*)\|_V^2 &\leq Ke^{-be^{\mu\gamma\lambda^{\gamma/2}(t^*-t_0)}},
\end{align*}
where $K := e^{-\frac{1}{\mu\gamma\lambda^{\gamma/2}}}$ and $b :=\frac{1}{\mu\gamma\lambda^{\gamma/2}} (-(\mu\gamma\lambda^{\gamma/2}-1)\log{\|\bw(t_0)\|_V^2})$.  
Since this inequality indicates that $\|\bw\|_V^2$ decays monotonically at least double-exponentially, we again note that we can extend $t^*$ until $\|\bw(t^*)\|_V^2 = \epsilon$.

In addition, note that with these assumptions on the interpolant we can directly obtain double-exponential and finite-in-time $L^2$ convergence of $\|\bw\|_H^2$ to $\epsilon/\lambda_1$ due to Poincar{\'e}'s inequality.

We again note that 
\begin{align*}
&\quad
 \frac{1}{2}\frac{d}{dt}\|\bw\|_V^2 - ( B(\bw,\bw),A\bu) + \nu \|A\bw\|_H^2 
 \\&= 
 -\mu\left(\mathcal{N}(I_h(\bw)),A\bw\right)
 \\&= 
 -\mu\left(|I_h\bw|^{-\gamma}I_h(\bw),A\bw\right) -\mu\left(I_h(\bw),A\bw\right),
 \\&\leq 
 -\mu\left(I_h(\bw),A\bw\right),
\end{align*}
using the assumption that $\left(I_h(\bw),A\bw\right) \geq 0$.
By our choice of $\mu$ and $h$, we have by Theorem \ref{CDA_Vnorm} that exponential convergence still holds.
\end{proof}

\begin{remark}\label{rmk_linear_off}
Instead of considering the nonlinear-nudging CDA algorithm implemented for all time, one could alternatively consider the case where fewer data points are observed initially and utilize the linear-nudging CDA algorithm up until a computable time $T$ (see Appendix \ref{sec_comp}) where $\|\bw\| < 1$ for either the $H$ or $V$ norm (where the exact upper bound is what is given in the hypotheses of Theorems \ref{first_conv_theorem} and \ref{second_conv_theorem} above).  This nonlinear term would then be given by setting $\beta = 0$ and $\mathcal{N}$ redefined as
\begin{align*}
\mathcal{N}(\bphi) := 
\begin{cases}
0, &  \text{ if }\quad\|\bphi\|_{L^2(\Omega)} = 0,
\\ 
\bphi\|\bphi\|_{L^2(\Omega)}^{-\gamma}, & \text{ if }\quad 0<\|\bphi\|_{L^2(\Omega)} <1,
\\
\bphi, & \text{ if }\quad 1\leq\|\bphi\|_{L^2(\Omega)}.
\end{cases}
\end{align*}
Then, one could ``turn on'' the nonlinearity by initializing the nonlinear-nudging CDA system with data from the linear-nudging CDA system.  In this setting, the $h$ for the linear-nudging data assimilation is fixed, and then, depending on the choice of $\epsilon$, one can determine whether to maintain or decrease $h$ (or refine the grid on which one is interpolating) in order to always guarantee double-exponential convergence.  In other words, the error of the convergence prescribed requires a tuning of the accuracy of the interpolant: the smaller the error, the smaller we required $h$ to be, i.e. the more accurate the interpolant needed to be.  For example, in the case of Fourier truncation, one would need a greater number of observed wave modes, and in the case of volume interpolation, one would have to have knowledge of the average of the solution over smaller volumes covering the domain.  This implementation of the linear-nudging CDA algorithm and subsequently the nonlinear-nudging CDA algorithm could be implemented computationally as well, where the time to switch between the linear-nudging CDA algorithm and the nonlinear-nudging CDA algorithm (with or without the linear piece) is computed in Appendix \ref{sec_comp} below.
\end{remark}

\begin{remark}\label{type2_interp_rmk}
    One could also work through similar existence and convergence arguments for type 2 interpolants, where $I_h$ instead satisfies the bound
    \begin{align}\label{interp_bound_type2}
        \|\bphi - I_h(\bphi)\|_{L^2(\Omega)}^2 
        \leq 
        \tfrac14 c_0^2h^4\|\bphi\|_{H^2(\Omega)}^2.
    \end{align}
    However, it is not very illuminating nor does it necessarily expand our possible choice of interpolants, as the methods of proof for the super-exponential convergence rely most heavily on the other assumptions being made on $I_h$, notably, in Theorem \ref{first_conv_theorem} the proof of the super-exponential convergence relies exclusively on the bounds \eqref{Ih_positive}, \eqref{Ih_bounded}, while in Theorem \ref{second_conv_theorem} the proof for the super-exponential convergence relies exclusively on the condition \eqref{Ih_A_positive} and \eqref{Ih_always_assumptions}.  In particular, one needs that the nonlinear weight can be bounded in the $H$ and $V$ norms, respectively, which is not provided by the bound \eqref{interp_bound_2}.
\end{remark}

\section{Computational Results}\label{sec_simulations}
In this section, we present some simulations of the nonlinear-nudging data assimilation algorithm discussed above, in the context of the 2D incompressible Navier-Stokes equations with periodic boundary conditions, and forcing over a wide range of scales.  In particular, we demonstrate that the convergence rate is super-exponential in time, until the error becomes quite small ($\|\psi-\psi_{\text{DA}}\|_{L^2} \approx 5\times10^{-12}$ in our trials, see notation below), at which point the convergence becomes merely exponential, as discussed in Remark \ref{remark_eps_barrier} and Appendix \ref{sec_Heuristic_eps_argument}.
The results are shown in \eqref{fig_super_exp_conv}.

All simulations were carried out using pseudo-spectral methods at the stream-function level in our own Matlab code, and run using Matlab version 2020b.  The mean-free stream functions $\psi$ and $\psi_{\text{DA}}$ were determined by $\nabla^\perp \psi=\bu$ and $\nabla^\perp \psi_{\text{DA}}=\bv$. Fourier transforms were computed using Matlab's \texttt{fftn} tool.  The linear viscosity term was handled implicitly using an integrating factor method Euler algorithm, as described in, e.g.,  \cite{Kassam_Trefethen_2005}.  For the interpolation operator $I_h$, we used a projection onto low Fourier modes.  We used a uniform time step of $\Delta t = 3.1250\times 10^{-4}$, which is sufficient to satisfy the advective CFL constraint.  The nonlinear term was treated explicitly (respecting the 2/3's dealiasing rule), using the Basdevant formulation (see, e.g., \cite{Basdevant_1983_JCP,Emami_Bowman_2018}).  The periodic domain was $[-\pi,\pi)^2$ with a uniform mesh of $1024^2$ grid points.   Initial data $\bu_0$ for the ``true'' simulation generated by starting with zero initial data, and then running the simulation until the energy, and enstrophy, appeared to be in an approximately statistically steady state (judged visually), which happened at $t \approx 240$. The energy spectrum of the initial data $\psi_0$, and the corresponding vorticity ($\triangle\psi_0$) are pictured in Figure \ref{fig_u0_and_f}.

\begin{figure}
\centering
\begin{subfigure}[t]{.48\textwidth}
  \centering
  \includegraphics[width=\textwidth,trim = 15mm 5mm 15mm 8mm, clip]{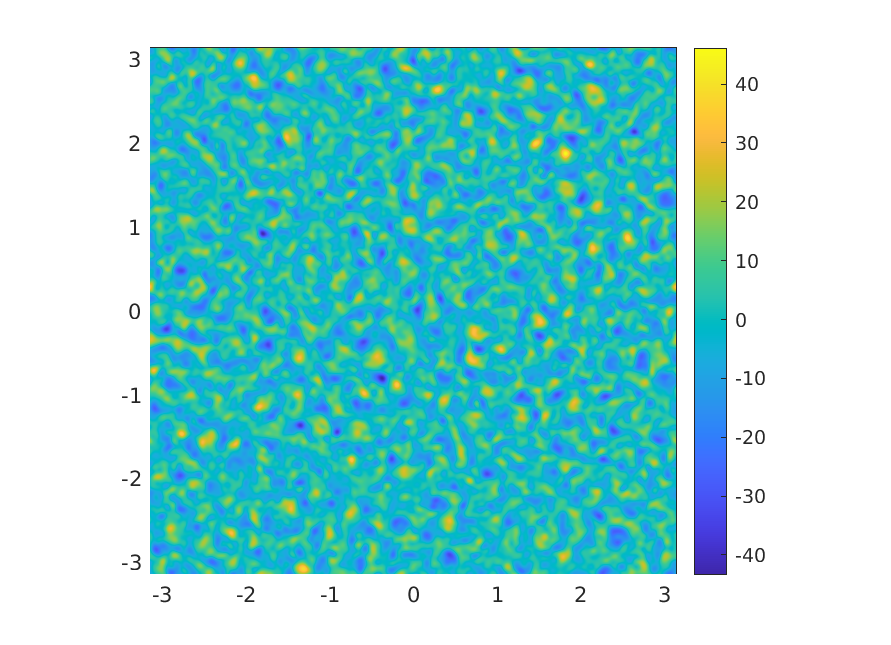}
  \caption{Curl of initial velocity}
  \label{fig_init_vor}
\end{subfigure}
\begin{subfigure}[t]{.48\textwidth}
  \centering
  \includegraphics[width=\textwidth,trim = 0mm 0mm 0mm 0mm, clip]{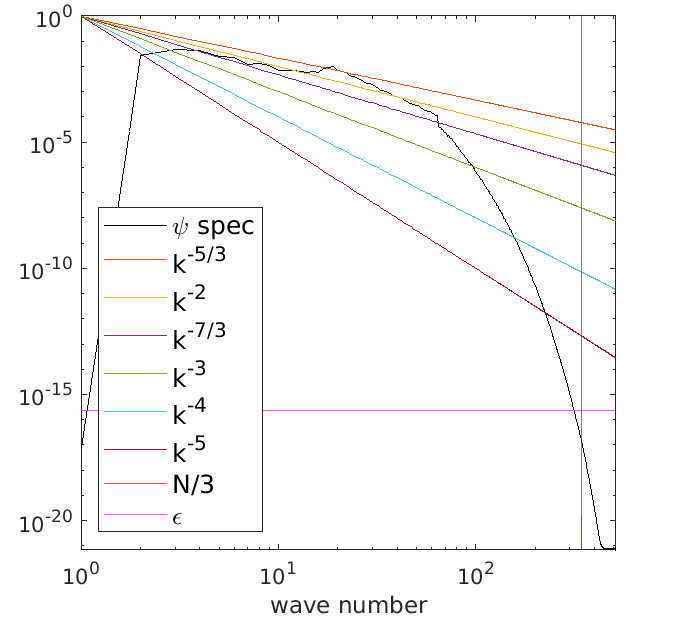}
  \caption{Spectrum of initial $\psi$}
  \label{fig_init_spec}
\end{subfigure}
\begin{subfigure}[t]{.48\textwidth}
  \centering
  \includegraphics[width=\textwidth,trim = 15mm 5mm 15mm 8mm, clip]{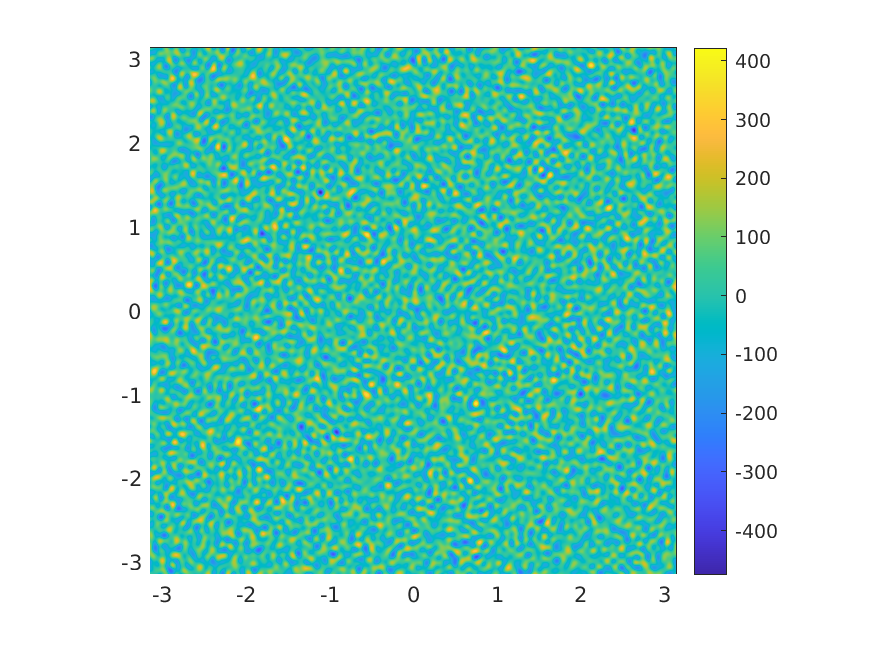}
  \caption{Curl of the forcing}
  \label{fig_f_curl}
\end{subfigure}
\begin{subfigure}[t]{.48\textwidth}
  \centering
  \includegraphics[width=\textwidth,trim = 0mm 0mm 0mm 0mm, clip]{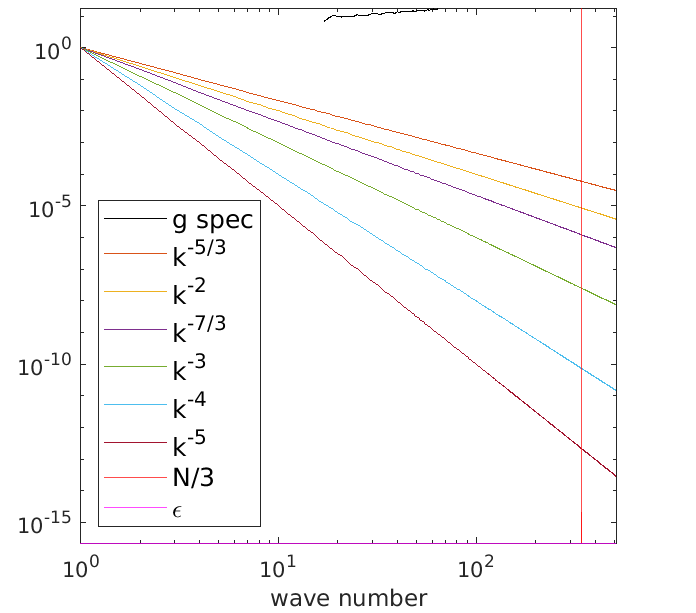}
  \caption{Spectrum of curl of forcing}
  \label{fig_f_curl_spec}
\end{subfigure}
	\caption{\label{fig_u0_and_f} 
	Initial data and forcing.}
\end{figure}


As for the forcing, in light of Remark \ref{remark_eps_barrier}, we were interested in a time-independent force which injects energy at high wave modes in order to better see the effect of the nonlinear-nudging data assimilation term (see \ref{sec_Heuristic_eps_argument} for further rationale).  Therefore, we determined a forcing by choosing normally distributed random values for the real and complex part of each Fourier coefficient $\bbf_\bk$ of the force with wavenumber between $16$ and $64$; namely, the set $\{\bk=(k_1,k_2)|\, 16^2\leq k_1^2$ $+k_2^2\leq64^2\}$ (in fact, only half of the wavemodes were assigned and the rest were computed using the reality condition $\bbf_{-\bk}=\overline{\bbf_{\bk}}$).  Matlab's random number generator was initialized using \texttt{rng(0)} for consistency and reproducibility.  The curl of the forcing, and its energy spectrum, are pictured in Figure \ref{fig_u0_and_f}.

Our parameters were chosen as follows: The Grashof number was $G=250,000$, the viscosity was $\oldnu=0.008$, and $h$ was chosen so that wavemodes of wavenumbers less than or equal to $32$ were observed; that is, the wavemodes at wavenumbers $\set{\bk=(k_1,k_2)|\, 0<k_1^2+k_2^2\leq32^2}$ were observed.  For the nonlinear-nudging data assimilation parameters, we choose $\gamma = 0.1$, and $\mu = \beta = 2$.  These parameter ranges were not finely tuned to exhibit any special behavior, other than avoiding instability (seen, e.g., when $\mu$ is too large).  In our own tests (data not reported here), we observed that modest changes in these parameter values did not yield significant qualitative changes in the results, indicating qualitative robustness of the results.

\begin{figure}
\centering
\begin{subfigure}[t]{1\textwidth}
  \centering
  \includegraphics[width=\textwidth,trim = 0mm 0mm 0mm 0mm, clip]{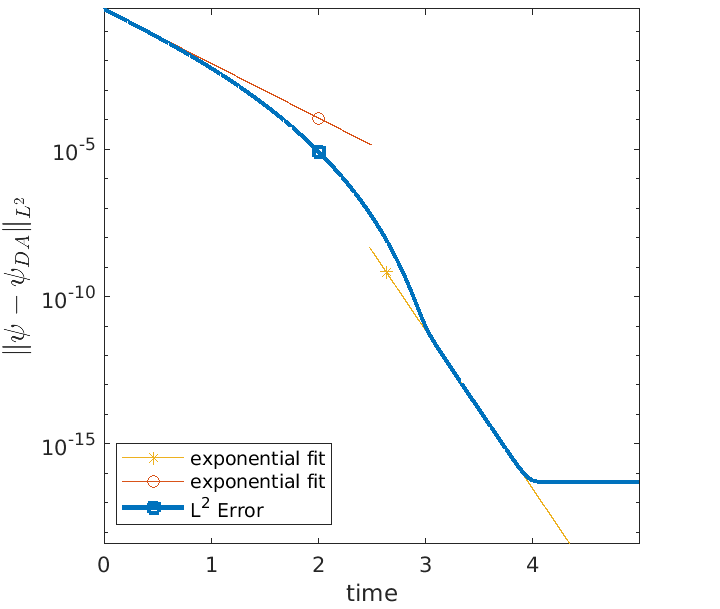}
  \caption{(log-linear plot) Blue curve: $L^2$ error of the solution.  Red line: Exponential fit up to time $t\approx0.15$.  Orange line:  Exponential fit for $3.1\lesssim t\lesssim 3.8$.}
  \label{fig_super_exp_conv}
\end{subfigure}
\begin{subfigure}[t]{.48\textwidth}
  \centering
  \includegraphics[width=\textwidth,trim = 0mm 0mm 0mm 0mm, clip]{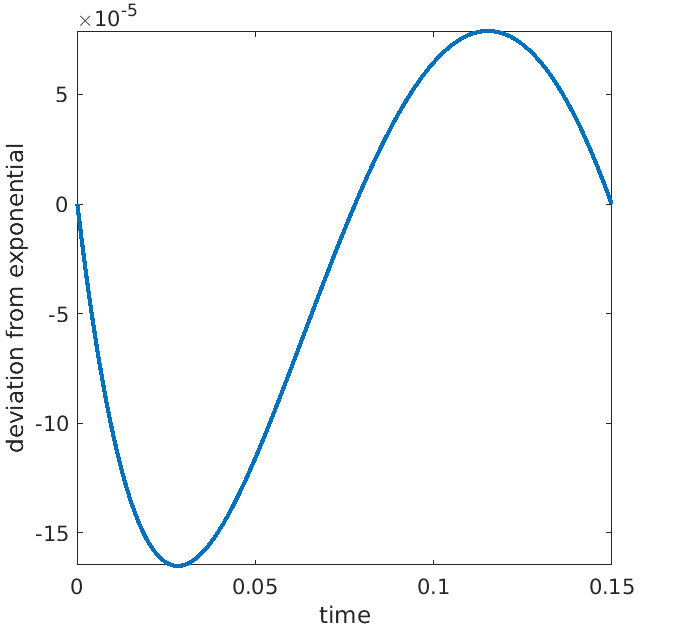}
  \caption{(linear-linear plot) Error minus (red line) exponential fit.}
  \label{fig_super_exp_conv_upper}
\end{subfigure}
\begin{subfigure}[t]{.48\textwidth}
  \centering
  \includegraphics[width=\textwidth,trim = 0mm 0mm 0mm 0mm, clip]{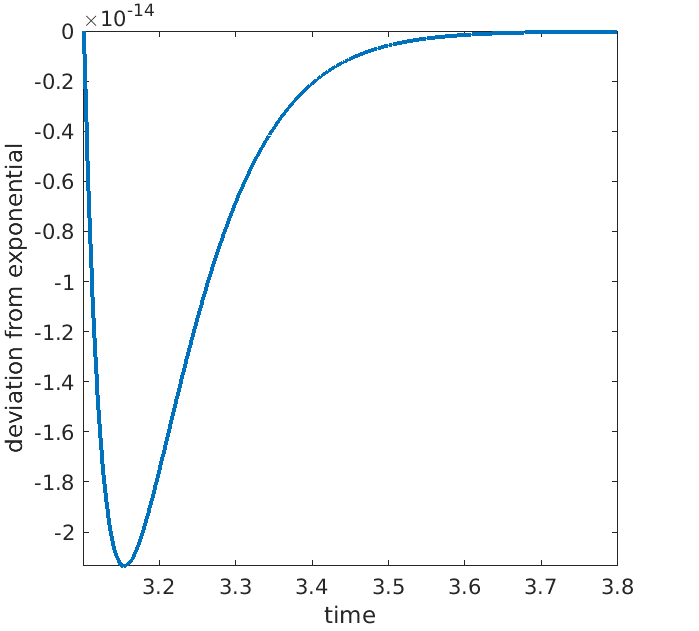}
  \caption{(linear-linear plot) Error minus (orange line) exponential fit.}
  \label{fig_super_exp_conv_lower}
\end{subfigure}
	\caption{\label{fig_conv} 
	(A): $L^2$ error between the assimilated and true solutions.    (B) and (C): deviations from exponential fits.}
\end{figure}

\FloatBarrier

These graphics in particular corroborate our analysis in terms of the failing of super-exponential convergence due the $\epsilon$-barrier being reached, as seen in Figure \ref{fig_conv}.  In particular, in Figure \ref{fig_super_exp_conv}, convergence appears exponential at early times, then becomes super-exponential, and finally returns to merely exponential at later times.  The deviations from an exponential fit was observed to be fairly small: $1.7\times 10^{-4}$ at early times (see Figure \ref{fig_super_exp_conv_upper}) and $2.2\times 10^{-14}$ at later times.  In between these times, super-exponential convergence is observed (see Figure \ref{fig_super_exp_conv}).


\section{Conclusion}\label{sec_conclusion}
\noindent
In this paper, we proved the existence of solutions to the nonlinear-nudging data assimilation system under the same assumptions on the interpolant as that of the linear-nudging data assimilation system.  Uniqueness of solutions were proven to hold under more stringent assumptions on the interpolant operator of the observed measurements.  With different assumptions on the interpolant, convergence of any corresponding solution to the nonlinear-nudging data assimilation to the true solution of the 2D incompressible Navier-Stokes equations was shown to be obtained up to a prescribed error in finite time at an at least double-exponential rate.  In particular, any solution of the nonlinear-nudging system, even in regimes where uniqueness might not hold, will converge to the true solution.
These results provide a theoretical foundation for the computational results seen in simulations in \cite{Larios_Pei_2017_KSE_DA_NL, Hudson_Jolly_2019}, and the present work.

\section{Appendix}

\subsection{Heuristic argument for the $\epsilon$-barrier}\label{sec_Heuristic_eps_argument}
We analyze  \eqref{generic_DE_nonlin_CDA_sum_CDA} in the Navier-Stokes case \eqref{NSEtrue_leray}, i.e.,  $\mathcal{F}(\bv) = -B(\bv,\bv) - \nu A\bv + \bbf$, 
with $I_h = P_m$, i.e., projection onto the low Fourier modes of index $m < 1/h$.  This yields the equation
\begin{align}\label{generic_DE_nlCDA_specificIH}
    \bv_t + \nu A \bv = B(\bv,\bv) - \mu \|P_m(\bu-\bv)\|_{L^2(\Omega)}^{-\gamma}P_m(\bv-\bu) - \beta P_m(\bv-\bu)
\end{align}
Set $\bw = \bv - \bu$.  Subtracting \eqref{generic_DE_nlCDA_specificIH} from the reference system, one obtains
\begin{align}
    \bw_t + \nu A \bw = B(\bv,\bv)-B(\bu,\bu) -\mu\|P_m\bw\|_{L^2(\Omega)}^{-\gamma}P_m\bw - \beta P_m\bw.
\end{align}
Taking a (formal) inner-product with $\bw$ and simplifying yields
\begin{align}\label{formal_ip_generic}
    &\quad
    \frac12 \frac{d}{dt} \|\bw\|_{L^2(\Omega)}^2 + \nu\|A^{1/2}\bw\|_{L^2(\Omega)}^2 
    \\&= \notag
    \ip{B(\bv,\bv)-B(\bu,\bu)}{\bw} - \mu \|P_m\bw\|^{-\gamma}\|P_m\bw\|_{L^2(\Omega)}^2 - \beta \|P_m\bw\|_{L^2(\Omega)}^2.
\end{align}
Denoting $Q_m:=I-P_m$ and noting that $\|\bw\|_{L^2}^2 = \|P_m\bw\|_{L^2}^2+\|Q_m\bw\|_{L^2}^2$, 
\begin{align}
&\quad \notag
    \frac12 \frac{d}{dt} \|\bw\|_{L^2(\Omega)}^2 + \nu\|A^{1/2}\bw\|_{L^2(\Omega)}^2 + \beta\|\bw\|_{L^2(\Omega)}^2 + \mu\|P_m\bw\|_{L^2(\Omega)}^{-\gamma}\|\bw\|_{L^2(\Omega)}^2 
\\& = \notag
    \ip{B(\bv,\bv)-B(\bu,\bu)}{\bw} + \mu \|P_m\bw\|_{L^2(\Omega)}^{-\gamma}\|Q_m\bw\|_{L^2(\Omega)}^2 + \beta \|Q_m\bw\|_{L^2(\Omega)}^2
\end{align}
Rearranging, we obtain
\begin{align}\label{destabilization_calculation}
&\quad
    \frac12 \frac{d}{dt} \|\bw\|_{L^2(\Omega)}^2 + \frac{\nu}{2}\|P_m A^{1/2}\bw\|_{L^2(\Omega)}^2 + \frac{\nu}{2}\|A^{1/2}\bw\|_{L^2(\Omega)}^2 
    \\&\quad \notag
    +  \beta\|\bw\|_{L^2(\Omega)}^2 + \mu\|P_m\bw\|_{L^2(\Omega)}^{-\gamma}\|\bw\|_{L^2(\Omega)}^2 
\\& = \notag
    \ip{B(\bv,\bv)-B(\bu,\bu)}{\bw} + \mu \|P_m\bw\|_{L^2(\Omega)}^{-\gamma}\|Q_m\bw\|_{L^2(\Omega)}^2 
\\&\quad \notag
    + \beta \|Q_m\bw\|_{L^2(\Omega)}^2 - \frac{\nu}{2}\|Q_mA^{1/2}\bw\|_{L^2(\Omega)}^2
\end{align}
We leave part of the dissipation on the left-hand side to absorb terms bounding $B(\bv,\bv) - B(\bu,\bu)$.  The terms $\|Q_m\bw\|_{L^2(\Omega)}^{2}$ and $\|Q_m\bw\|_{L^2(\Omega)}^{2-\gamma}$ are the terms hindering the exponential convergence of $\bv$ to the reference solution $\bu$, and hence we want these last three terms to be negative overall.  We expand the last three terms on the right-hand side to obtain
\begin{align}
    \sum\limits_{|\bk| = m+1}^\infty (\mu\|P_m\bw\|_{L^2}^{-\gamma} + \beta - \frac{\nu}{2}|\bk|^2) |\hat{\bw}_\bk|^2 \leq 0.
\end{align}
No matter how large one takes $m$ (i.e., how small $h$ is taken, since we generally take $m \sim L/h^2$, where $L$ is a characteristic length scale), as $\|\bw\|_{L^2(\Omega)} \to 0$, $\|P_m\bw\|_{L^2}^{-\gamma} \to \infty$, indicating there is a time at which the error becomes small enough that this term will hurt the rate of convergence rather than help.  Moreover, we see from \eqref{destabilization_calculation} that the larger $\beta$ is chosen (e.g., in order to enhance the convergence rate of the small scales), the more strongly the small scales (as measured by $\|Q_m\bw\|_{L^2(\Omega)}^2$) are destabilized.  This appears to be the reason why the super-exponential convergence rate is eventually destroyed, as seen both in our analysis and in our simulations.  We refer to this as a ``spill-over'' effect; namely, the phenomenon that increased control of the large scales leads to increased destabilization of the small scales.  In the case of the Navier-Stokes equations, the spill-over of energy into the small scales is controlled by the presence of viscosity; namely, for large enough $\nu$, the error in the small scales is damped strongly enough to counteract the spill-over effect.  

Marvelously, in the Navier-Stokes case, the exponential convergence still holds in spite of the spill-over effect.  This can be seen by writing \eqref{formal_ip_generic} as
\begin{align}\label{destabilization}
     &\quad
    \frac12 \frac{d}{dt} \|\bw\|_{L^2(\Omega)}^2 + \nu\|A^{1/2}\bw\|_{L^2(\Omega)}^2
    \\&= \notag
    \ip{B(\bv,\bv)-B(\bu,\bu)}{\bw}  - (\beta+\mu) \|P_m\bw\|_{L^2(\Omega)}^2 + \mu (\|P_m\bw\|_{L^2(\Omega)}^2  - \|P_m\bw\|_{L^2(\Omega)}^{2-\gamma});
\end{align}
the final term becomes negative as $\|\bw\|_{L^2(\Omega)}\to 0$, and hence exponential convergence is maintained with an improved rate than for the standard linear-nudging CDA algorithm thanks to the added $\mu$ in the linear term. In other words, although the rate of convergence is no longer super exponential, the nonlinear-nudging term does not become so malicious as $\|\bw\|_{L^2(\Omega)} \to 0$ that it counteracts the standard exponential convergence and in fact it still improves the exponential rate of convergence.  This further elucidates our comments in Remark \ref{rmk_linear_off}, i.e., an exponential rate of convergence can be maintained by the nonlinear-nudging term alone.

\subsection{Proof of Lemma \ref{small_h}}\label{sec_small_h}

\begin{proof}
 Let $\delta := \min\left\{ a/2, a^{\frac{2-\gamma}{2}}\lp \frac{\epsilon}{\lp\frac{2-\gamma}{2}\rp^{\frac{2-\gamma}{\gamma}} - \lp\frac{2-\gamma}{2}\rp^{\frac{2}{\gamma}}} \rp^{\frac{\gamma}{2}}\right\}$.  Note that $f(x)$ has two critical points at $x = 0$ and $x = \lp\frac{(2-\gamma)\delta}{2a}\rp^{\frac{1}{\gamma}}$. 
 We further note that $f(x_0)$ is a global minimum since $f(0) = 0$, 
 \begin{align*}
  f(x_0) &= a \lp\lp\frac{(2-\gamma)\delta}{2a}\rp^{\frac{1}{\gamma}}\rp^2 - \delta \lp \lp\frac{(2-\gamma)\delta}{2a}\rp^{\frac{1}{\gamma}} \rp^{2-\gamma} \\
  &= a^{-\frac{2-\gamma}{\gamma}}\delta^{\frac{2}{\gamma}} \lp \lp\frac{2-\gamma}{2}\rp^{\frac{2}{\gamma}} - \lp\frac{2-\gamma}{2}\rp^{\frac{2-\gamma}{\gamma}}\rp 
  \leq 0,
 \end{align*}
$f'(x) \geq 0 $ for all $x \geq x_0$, and $f'(x) \leq 0$ for $x \leq x_0$. Indeed, $f'(x) \geq 0$ for all $x \geq x_0$ since
\begin{align*}
 f'(x) &\geq 2ax - (2-\gamma)\delta x_0^{1-\gamma} \\
 &= 2ax - (2-\gamma)\delta \lp\frac{(2-\gamma)\delta}{2a}\rp^{\frac{1-\gamma}{\gamma}} \\
 &= 2a\lp x - \lp\frac{(2-\gamma)\delta}{2a}\rp^{\frac{1}{\gamma}}\rp \\
 &= 2a(x-x_0)\\
 &\geq 0,
\end{align*}
and $f'(x) \leq 0$ for all $x \leq x_0$ since 
\begin{align*}
 f'(x) &\leq 2ax_0 - (2-\gamma) \delta x^{1-\gamma}\\
 &= 2a\lp\frac{(2-\gamma)\delta}{2a}\rp^{\frac{1}{\gamma}} - (2-\gamma) \delta x^{1-\gamma}\\
 &= (2-\gamma)\delta(\lp\frac{(2-\gamma)\delta}{2a}\rp^{\frac{1-\gamma}{\gamma}} - x^{1-\gamma}) \\
 &= (2-\gamma)\delta(x_0^{1-\gamma}- x^{1-\gamma}) \\
 &\leq 0.
\end{align*}

Hence, $f(x) \geq f(x_0)$ for all $x \in \mathbb{R}_{\geq0}$.  Thus, denoting \[b := \lp \lp\frac{2-\gamma}{2}\rp^{\frac{2-\gamma}{\gamma}} - \lp\frac{2-\gamma}{2}\rp^{\frac{2}{\gamma}}\rp,\] our choice of $\delta$ yields
\begin{align*}
 f(x) \geq f(x_0) &\geq a^{-\frac{2-\gamma}{\gamma}}\lp a^{\frac{2-\gamma}{2}}\lp \frac{\epsilon}{b} \rp^{\frac{\gamma}{2}}\rp^{\frac{2}{\gamma}} (-b) 
 =
  -\epsilon.
\end{align*}
\end{proof}

\subsection{Computation of explicit times at which the nonlinear-nudging term in the algorithm improves the convergence rate}\label{sec_comp}

Note that one can compute a time $t_a$ at which $\bu$ is in the absorbing ball (see, e.g. \cite{Foias_Manley_Rosa_Temam_2001, Robinson_2001, Temam_2001_Th_Num}) so that Theorem \ref{ubounds} applies and the exponential decay in \cite{Azouani_Olson_Titi_2014} holds.  The decay of the nonlinear-nudging algorithm is controlled by the exponential decay of the linear-nudging algorithm in \cite{Azouani_Olson_Titi_2014} (see the beginning of the proofs of Theorems \ref{first_conv_theorem},\ref{second_conv_theorem}) can be written explicitly as (for reference, see, e.g., \cite{Carlson_Hudson_Larios_2020})
\begin{align*}
\|\bu(t)-\bv(t)\|_H^2 &\leq \|\bu(t_a)-\bv(t_a)\|_H^2 e^{1+r/2} e^{-\frac{r}{2T}(t-t_a)},
\end{align*}
where $\frac{1}{\nu \lambda_1} < T< \infty$ and 
\begin{align*}
r &= \liminf\limits_{t\to\infty} \int_t^{t+T} \beta - \frac{2c^2}{\nu} \|\bu(s)\|^2 ds 
\\&\geq 
T\beta - \frac{2c^2}{\nu}\lp 2(1+\lambda_1T\nu)\nu G^2\rp > 0,
\end{align*}
and $c$ is the constant from the inequality \eqref{BIN}.

By the assumptions of Theorem \ref{first_conv_theorem}, we need that $\|\bu(t_0)-\bv(t_0)\|_H < R_H$, where $R_H = \min\left\{e^{-\frac{1}{\beta\gamma\lambda_1^{\gamma/2}}}, \lp\frac{\beta\lambda_1^{\gamma/2}}{\beta\lambda_1^{\gamma/2}+1}\rp^{1/\gamma}\right\}$, so we need to choose $t_a$ such that 
\[\|\bu(t_a)-\bv(t_a)\|_H^2 e^{1+r/2} e^{-\frac{r}{2T}(t-t_a)} < R_H^2.\]
Bounding $\|\bu(t_a)\|_H^2$ using Theorem \ref{ubounds} and using the bounds on $r$, we instead find a time $t_a$ such that
\begin{align*}
    \|\bu(t_a)&-\bv(t_a)\|_H^2 e^{1+r/2} e^{-\frac{r}{2T}(t-t_a)} 
    \\ &\leq 
    2(2(\nu)^2G^2+\|\bv(t_a)\|_H^2) e^{1+\beta T} e^{-\beta/2-\frac{c^2}{\nu T}(2(1+\lambda_1T\beta)\nu G^2)(t-t_a)}
    \\ &<
      R_H^2.
\end{align*}

Thus, we determine the nonlinear-nudging system can be initialized from any time $t_0$ such that

\begin{align*}
    t_0 > t_a - \frac{\log \left\{   \frac{R_H^2e^{-1-\beta T}}{2(2(\nu)^2G^2 + \|\bv(t_a)\|_H^2)}      \right\}}{\beta/2 - \frac{c^2}{T\nu}(2(1+\lambda_1\nu T)\nu G^2)}.
\end{align*}
There is no need to observe $\|\bv(t_a)\|_H$ at the fixed time $t_a$; instead, the bound from \cite{Azouani_Olson_Titi_2014}
\[\|\bv(t_a)\|_H^2 \leq e^{-\nu\lambda_1 t_a}\|\bv_0\|_H^2 + \frac{M}{\beta\nu\lambda_1}(1-e^{-\nu\lambda t_a}) := \text{RHS}_H,\]
where $M$ is a constant such that $\|f+ \beta P_\sigma I_h(\bw))\|_H^2 < M$, can be used to choose a time $t_0$ such that
\begin{align*}
    t_0 > t_a - \frac{\log \left\{   \frac{R_H^2e^{-1-\beta T}}{2(2(\nu)^2G^2 + \text{RHS}_H)})    \right\}}{\beta/2 - \frac{c^2}{T\nu}(2(1+\lambda_1\nu T)\nu G^2)}.
\end{align*}
 
For the setting of Theorem \ref{second_conv_theorem}, we have the bound
\begin{align*}
\|\bu(t)-\bv(t)\|_V^2 &\leq \|\bu(t_a)-\bv(t_a)\|^2 e^{\Gamma + 1+r/2}e^{-\frac{r}{2T}(t-t_a)},
\end{align*}
where $\lambda_1 \nu \leq T < \infty$, 
\[r =  \liminf\limits_{t\to\infty} \int_t^{t+T} \frac{1}{2} \lp \beta - \frac{J^2}{\beta} \|A\bu\|_H^2\rp ds > \frac{5}{6} JG > 0,\]
and 
\[ \Gamma = \liminf\limits_{t\to\infty} \int_t^{t+T} \max\left\{\frac{1}{2} \lp \beta - \frac{J^2}{\beta} \|A\bu\|_H^2\rp,0\right\} ds \geq r > 0,\]
with $J = 2c\log(2c^{3/2})+4c\log(1+G)$ and $c$ is the constant dependent on the domain determined from the Brezis-Gallouet inequality.
By the assumptions of Theorem \ref{second_conv_theorem}, we need that $\|\bu(t_0)-\bv(t_0)\|_V < R_V$, where $R_V = \min\left\{e^{-\frac{1}{\beta\gamma\lambda_1^{\gamma/2}}},\lp\frac{\beta\lambda_1^{\gamma/2}}{\beta\lambda_1^{\gamma/2}+1}\rp^{1/\gamma}\right\}$, so we need to choose $t_0$ such that 
\[ \|\bu(t_a)-\bv(t_a)\|_V^2 e^{\Gamma + 1+r/2}e^{-\frac{r}{2T}(t-t_a)} < R_V^2. \]
Again, bounding $\|\bu(t_a)\|_V^2$ using Theorem \ref{ubounds} and using the bounds on $\Gamma$ and $r$, we instead find a time $t_a$ such that
\begin{align*}
    \|\bu(t_a)&-\bv(t_a)\|_V^2 e^{\Gamma + 1+r/2}e^{-\frac{r}{2T}(t-t_a)}
    \\& \leq
      2(2\lambda_1(\nu)^2G^2 + \|\bv(t_a)\|^2) e^{1+\beta T} e^{-\frac{5}{6} GJ(t-t_a)}
    <
      R_V^2
\end{align*}

Then, the nonlinear-nudging system can be initialized from any time $t_0$ such that
\begin{align*}
    t_0 > t_a - \frac{6}{5GJ} \log \left\{ \frac{R_V^2e^{-(1+\beta T)}}{2(2\lambda_1(\nu)^2G^2 + \|\bv(t_a)\|_V^2)}  \right\}.
\end{align*}  
Again, there is no need to observe $\|\bv(t_a)\|_V$ at the fixed time $t_a$, since the bound from \cite{Azouani_Olson_Titi_2014}
\begin{align*}
&\quad
\|\bv(t_a)\|_V^2 
\leq \text{RHS}_V
\\&:=
e^{\frac{54c^4}{(\nu)^3}\lp\frac{1}{\nu}\|\bv_0\|_H^2 + \frac{T}{\nu\beta}M\rp^2\lp\frac{1}{\nu}\|\bv_0\|_V^2 + \frac{M}{\beta\nu\lambda_1} \rp^2}\lp\|\bv_0\|_V^2+\frac{4T}{\nu}M\rp
\end{align*}
where $M$ is the same constant such that $\|f+ \beta P_\sigma I_h(\bw))\|_H^2 < M$, can be used to choose a time $t_0$ such that
\begin{align*}
        t_0 > t_a - \frac{6}{5GJ} \log \left\{ \frac{R_V^2e^{-(1+\beta T)}}{2\left(2\lambda_1(\nu)^2G^2 + \text{RHS}_V\right) }  \right\}.
\end{align*}

\FloatBarrier

\section*{Acknowledgments}
 \noindent
 The authors would like to thank the Isaac Newton Institute for Mathematical Sciences, Cambridge, for support and warm hospitality during the programme ``Mathematical aspects of turbulence: where do we stand?'' where work on this paper was undertaken. This work was supported by EPSRC grant no EP/R014604/1. 
 The research of E.C. was supported in part by NSF GRFP grant no. 1610400 and in part by the Pacific Institute for the Mathematical Sciences (PIMS). The research and findings may not reflect those of the Institute.  E.C. would like to give thanks for the kind hospitality of the COSIM group at Los Alamos National Laboratory where some of this work was completed, and acknowledges and respects the Lekwungen peoples on whose traditional territory the University of Victoria stands, and the Songhees, Esquimalt and WSÁNEĆ peoples whose historical relationships with the land continue to this day.
 The research of A.L. was supported in part by NSF Grants CMMI-1953346 and DMS-2206762.
 The research of E.S.T. was made possible by NPRP grant \#S-0207-200290 from the Qatar National Research Fund (a member of Qatar Foundation), and is based upon work supported by King Abdullah University of Science and Technology (KAUST) Office of Sponsored Research (OSR) under Award No. OSR-2020-CRG9-4336.

\bibliographystyle{abbrv}

\end{document}